\documentclass[12pt]{amsart}



\usepackage{amssymb}

\usepackage{enumitem}

\usepackage{graphicx}

\makeatletter
\@namedef{subjclassname@2020}{%
  \textup{2020} Mathematics Subject Classification}
\makeatother

\usepackage[T1]{fontenc}


\newtheorem{theorem}{Theorem}[section]
\newtheorem{corollary}[theorem]{Corollary}
\newtheorem{lemma}[theorem]{Lemma}
\newtheorem{proposition}[theorem]{Proposition}



\theoremstyle{definition}

\newtheorem{remark}[theorem]{Remark}



\numberwithin{equation}{section}


\newcommand{\bfs}{\boldsymbol}
\newcommand{\A}{\mathbb A}
\newcommand{\K}{\mathbb K}
\newcommand{\N}{\mathbb N}
\newcommand{\F}{\mathbb F}
\newcommand{\Pp}{\mathbb P}

\newcommand{\fq}{\F_{\hskip-0.7mm q}}
\newcommand{\cfq}{\overline{\F}_{\hskip-0.7mm q}}

\def\ifm#1#2{\relax \ifmmode#1\else#2\fi}

\newcommand{\klk}    {\ifm {,\ldots,} {$,\ldots,$}}


\frenchspacing

\textwidth=13.5cm
\textheight=23cm
\parindent=16pt
\oddsidemargin=-0.5cm
\evensidemargin=-0.5cm
\topmargin=-0.5cm





\begin{document}


\baselineskip=17pt


\title[Distribution of defective systems]{The distribution
of defective multivariate polynomial systems over a finite field}

\author[N. Gimenez]{Nardo Gim\'enez}
\address{Instituto del Desarrollo Humano,
Universidad Nacional de Gene\-ral Sarmiento, J.M. Guti\'errez 1150
(B1613GSX) Los Polvorines, Buenos Aires, Argentina}
\email{ngimenez@ungs.edu.ar}

\author[G. Matera]{Guillermo Matera}
\address{Instituto del Desarrollo Humano,
Universidad Nacional de Gene\-ral Sarmiento, J.M. Guti\'errez 1150
(B1613GSX) Los Polvorines, Buenos Aires, Argentina\\
National Council of Science and Technology (CONICET), Ar\-gentina}
\email{gmatera@ungs.edu.ar}

\author[M. P\'erez]{Mariana P\'erez}
\address{Universidad Nacional de Hurlingham, Instituto de Tecnolog\'ia e
Ingenier\'ia\\ Av. Gdor. Vergara 2222 (B1688GEZ), Villa Tesei,
Buenos Aires, Argentina\\
National Council of Science and Technology (CONICET), Ar\-gentina}
\email{mariana.perez@unahur.edu.ar}

\author[M. Privitelli]{Melina Privitelli}
\address{Instituto de Ciencias,
Universidad Nacional de Gene\-ral Sarmiento, J.M. Guti\'errez 1150
(B1613GSX) Los Polvorines, Buenos Aires, Argentina\\
National Council of Science and Technology (CONICET), Ar\-gentina}
\email{mprivite@ungs.edu.ar}

\date{\today}

\begin{abstract}
This paper deals with properties of the algebraic variety defined as
the set of zeros of a ``deficient'' sequence of multivariate
polynomials. We consider two types of varieties: ideal-theoretic
complete intersections and absolutely irreducible varieties. For
these types, we establish improved bounds on the dimension of the
set of deficient systems of each type over an arbitrary field. On
the other hand, we establish improved upper bounds on the number of
systems of each type over a finite field.
\end{abstract}

\subjclass[2020]{Primary 14M10; Secondary 11G25, 14G15, 14G05}

\keywords{Finite fields, defective systems, ideal-theoretic complete
intersection, irreducible varieties, dimension, cardinality}

\maketitle

%
%
\section{Introduction}\label{section: intro}
Let $K$ be a field, $\overline{K}$ its algebraic closure, and let
$X_1,\ldots,X_r$ be indeterminates over $K$. For any $d\ge 1$, we
denote by $\mathcal{F}_d(K)$ the set of elements of
$K[X_1,\ldots,X_r]$ of degree at most $d$. Further, for $1<s<r$,
$d_1,\ldots,d_s\ge 1$ and $\bfs d_s:=(d_1,\ldots,d_s)$, we denote by
$\mathcal{F}_{\bfs d_s}(K):=\mathcal{F}_{d_1}(K)\times\cdots\times
\mathcal{F}_{d_s}(K)$ the set of $s$-tuples $\bfs
F_s:=(F_1,\ldots,F_s)$ in $K[X_1,\ldots,X_r]^s$ of degrees at most
$d_1,\ldots,d_s$ respectively. In this paper we are interested in
the geometric properties of the affine $K$-variety $V(\bfs F_s)$
defined by a given $\bfs F_s:=(F_1,\ldots,F_s)\in\mathcal{F}_{\bfs
d_s}(K)$, namely the set of common solutions with coefficients in
$\overline{K}$ of the system $F_1=0,\ldots,F_s=0$, when $\bfs F_s$
varies in $\mathcal{F}_{\bfs d_s}(K)$. In particular, we shall be
interested in the case when $K$ is the finite field $\fq$ of $q$
elements, where $q$ is a prime power. In this case, we denote
$\mathcal{F}_d:=\mathcal{F}_d(\fq)$ for $d\ge 1$ and
$\mathcal{F}_{\bfs d_s}:=\mathcal{F}_{\bfs d_s}(\fq)$ for $\bfs
d_s\in\N^s$.

For a generic $\bfs F_s\in\mathcal{F}_{\bfs d_s}(K)$, the variety
$V(\bfs F_s)\subset\overline{K}{}^r$ is an irreducible smooth
complete intersection. In fact, in the projective setting, namely
for $\bfs F_s:=(F_1,\ldots,F_s)$ varying in set $\mathcal{F}_{\bfs
d_s}^*(K)$ of $s$-tuples of homogeneous polynomials of
$K[X_0,\ldots,X_r]$ of degrees $d_1,\ldots,d_s$, the set of $\bfs
F_s$ for which $V(\bfs F_s)\subset\Pp^r$ is not a smooth complete
intersection forms a hypersurface in $\mathcal{F}_{\bfs d_s}^*(K)$,
whose degree has been precisely determined (see \cite[Theorem
1.3]{Benoist12}). This implies our assertion. Further, rather
precise estimate on the number of $\bfs F_s\in\mathcal{F}_{\bfs
d_s}^*(\fq)$ with this property are available (see \cite[Corollary
4.3]{GaMa19}).

On the other hand, for an arbitrary $\bfs F_s\in\mathcal{F}_{\bfs
d_s}(K)$, a number of ``accidents'' might happen. For example, $\bfs
F_s$ might fail to define a set-theoretic or an ideal-theoretic
complete intersection, or the variety $V(\bfs
F_s)\subset\overline{K}{}^r$ might fail to be irreducible. In
\cite[Theorems 3.2, 3.5 and 4.5]{GaMa19} it is shown that there are
hypersurfaces of $\mathcal{F}_{\bfs d_s}^*(K)$ of ``low'' degree
containing the set of homogeneous $\bfs F_s$ for which each of these
accidents occur. Further, there are estimates on the number of $\bfs
F_s\in\mathcal{F}_{\bfs d_s}^*(\fq)$ for which each of these
accidents occur.

In this paper we focuss on the set of $\bfs F_s\in\mathcal{F}_{\bfs
d_s}(K)$, for an arbitrary field $K$ or a finite field $K=\fq$, for
which two of these accidents happen: the $\bfs F_s$ which fail to
define an ideal-theoretic complete intersection, and those for which
$V(\bfs F_s)$ is not irreducible (absolutely irreducible for $\bfs
F_s\in\mathcal{F}_{\bfs d_s}$). For an arbitrary field $K$, we
provide bounds on the dimension of the set of $\bfs
F_s\in\mathcal{F}_{\bfs d_s}(\overline{K})$ for which either of
these accidents occur. These bounds are much sharper than those in
\cite{GaMa19}. Actually, we show that the set of $\bfs
F_s\in\mathcal{F}_{\bfs d_s}(\overline{K})$ for which the first
accident happens is contained in a variety of codimension at least
$r-s+2$, while those for which the second accident happens belong to
a variety of codimension at least $r-s+1$. For $K=\fq$, we provide
estimates on the number of $\bfs F_s\in\mathcal{F}_{\bfs d_s}$ for
which either of these accidents happen. We are interested in the
case $q\gg\max\{d_1,\ldots,d_s\}$. In this vein, the exponent of $q$
in our estimates reflect the improved results on the codimension of
the corresponding varieties.

A source of interest for the analysis of the frequency with which a
given $\bfs F_s\in\mathcal{F}_{\bfs d_s}(K)$ defines an
ideal-theoretic complete intersection comes from the setting of
multivariate system solving, in particular over finite fields.
Multivariate polynomial systems over finite fields arise in
connection with many fundamental problems in cryptography, coding
theory, or combinatorics; see, e.g., \cite{WoPr05}, \cite{DiGoSc06},
\cite{CaMaPr12}, \cite{CeMaPePr14}, \cite{MaPePr14},
\cite{MaPePr16}. Many times in these settings it is necessary to
``solve'' a given system, where solving may mean to find one,
several, or all the corresponding solutions with coefficients in
finite field under consideration.

There are two main families of methods for solving multivariate
systems over finite fields. On one hand, we have the so-called
``rewriting'' methods, particularly Gr\"obner basis methods (see,
e.g., \cite{Faugere99} or \cite{Faugere02}) and variants such as XL
and MXL (see, e.g., \cite{CoKlPaSh00} or \cite{MoCaDiBuBu09}). On
the other hand, we have ``geometric'' methods, which compute a
suitable description of the variety defined by the system under
consideration (see, e.g., \cite{HuWo99}, \cite{CaMa06a} or
\cite{HoLe21}). It is a crucial point from the complexity point of
view that both rewriting and geometric methods behave well on
systems defining an ideal-theoretic complete intersection (see,
e.g., \cite{BaFaSa15} or \cite[Chapter 26]{BoChGiLeLeSaSc17} for
Gr\"obner bases and \cite{CaMa06a} or \cite{HoLe21} for geometric
solvers). Therefore, information on the frequency with which such
kind of systems occur is critical in this setting. Concerning this
question, we have the following result (see Theorem \ref{th:
dimension F_s not red reg seq} and Corollary \ref{coro: number
systems not H}).
\begin{theorem}\label{th: F_s not red reg seq}
Let $K$ be a field and $\overline{K}$ its algebraic closure. For
$\bfs d_s:= (d_1,\ldots,d_s)\in\N^s$ with $d_i\ge 2$ for $1\le i\le
s$, denote by $B_1\subset \mathcal{F}_{\bfs d_s}(\overline{K})$ the
set of $\bfs F_s$ which do not define an ideal-theoretic complete
intersection. Then
\begin{equation}\label{eq: bound dim B_1}
\dim B_1\le \dim \mathcal{F}_{\bfs d_s}(\overline{K})-r+s-2.
\end{equation}
Further, for $K=\fq$, let $B_1(\fq)=B_1\cap \mathcal{F}_{\bfs d_s}$,
$\delta:=d_1\cdots d_s$ and $\sigma:=d_1+\cdots +d_s -s$.
Considering the uniform probability in $\mathcal{F}_{\bfs d_s}$, the
probability ${\sf P}_1$ of $B_1(\fq)$ is bounded in the following
way:
\begin{equation}\label{eq: bound cardinality B_1(Fq)}
{\sf P}_1\le \bigg(\frac{2\,s\,\sigma\,\delta}{q}\bigg)^{r-s+2}.
\end{equation}
\end{theorem}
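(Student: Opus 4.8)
The plan is to establish the two assertions separately: the dimension bound \eqref{eq: bound dim B_1} over an arbitrary field, and the probability bound \eqref{eq: bound cardinality B_1(Fq)} over $\fq$, the latter building on the former together with explicit degree estimates. Writing $N := \dim \mathcal{F}_{\bfs d_s}(\overline{K}) = \sum_{i=1}^s \binom{r+d_i}{r}$, the first step is to translate the failure of the ideal-theoretic complete intersection property into geometric conditions on $V(\bfs F_s)$. I would decompose $B_1 = \mathcal{A} \cup \mathcal{B}$, where $\mathcal{A}$ consists of the $\bfs F_s$ for which $V(\bfs F_s)$ is not a set-theoretic complete intersection (some component has dimension $> r-s$, equivalently $F_1,\ldots,F_s$ is not a regular sequence), and $\mathcal{B}$ consists of those $\bfs F_s$ for which $V(\bfs F_s)$ is a set-theoretic complete intersection but $(F_1,\ldots,F_s)$ is not radical. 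The point of this split is that the polynomials of a set-theoretic complete intersection form a regular sequence, so the quotient ring is Cohen--Macaulay; by Serre's criterion such a ring is reduced if and only if it is regular at the generic point of each irreducible component. Hence $\bfs F_s \in \mathcal{B}$ forces $V(\bfs F_s)$ to be \emph{singular along a whole component}, i.e. the Jacobian matrix $(\partial F_i/\partial X_j)$ has rank at most $s-1$ on a subvariety of dimension $r-s$.

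I would then bound $\dim\mathcal{A}$ and $\dim\mathcal{B}$ by the incidence-variety method. For $\mathcal{B}$ the relevant incidence variety is
\[
\Gamma = \{(\bfs F_s, x) : F_1(x) = \cdots = F_s(x) = 0, \ \mathrm{rank}\big(\partial F_i/\partial X_j(x)\big) \le s-1\},
\]
projecting to both $\mathcal{F}_{\bfs d_s}(\overline{K})$ and $\overline{K}{}^r$. Fixing $x$, the conditions $F_i(x)=0$ cut out a linear subspace of codimension $s$ in the fiber, while the rank drop imposes the vanishing of the maximal minors of a generic $s\times r$ matrix, an additional $r-s+1$ independent conditions; thus each fiber over $\overline{K}{}^r$ has codimension $r+1$, and the theorem on the dimension of fibers bounds $\dim\Gamma$. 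The delicate point, and the main obstacle, is that this count yields only codimension $1$ for the locus carrying an isolated singular point, and even the refined count forcing the singular locus to fill a component of dimension $r-s$ gives codimension $r-s+1$; reaching the sharper $r-s+2$ of \eqref{eq: bound dim B_1} requires a more careful fiber-dimension estimate for the restricted projection, exploiting the Cohen--Macaulay structure of $V(\bfs F_s)$ along that component. The case $\mathcal{A}$ is handled analogously, controlling the locus where the sequence fails to cut dimension properly, filtering by the first index $k$ at which $F_k$ becomes a zero divisor modulo $(F_1,\ldots,F_{k-1})$.

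Finally, for \eqref{eq: bound cardinality B_1(Fq)} over $K=\fq$, the qualitative codimension bound must be upgraded to a quantitative count of $\fq$-rational points. I would exhibit $B_1(\fq)$ inside the $\fq$-points of an affine variety of codimension $r-s+2$ whose degree is controlled by B\'ezout's inequality in terms of the $d_i$, producing the factors $\delta = d_1\cdots d_s$ and $\sigma = \sum_i (d_i-1)$ (and the multiplicity $s$ from the $s$ alternatives in the decomposition). Applying an explicit estimate for the number of $\fq$-rational points of an affine $\fq$-variety of given dimension and degree then yields a bound of the shape $(\text{degree})\cdot q^{\,N-(r-s+2)}$; dividing by $\#\mathcal{F}_{\bfs d_s}=q^{N}$ and absorbing the degree together with the numerical factor of the point-count estimate into the constant gives the announced $\big(2\,s\,\sigma\,\delta/q\big)^{r-s+2}$. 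The effort here lies in making the degree estimate sharp enough that the resulting constant matches the clean expression in \eqref{eq: bound cardinality B_1(Fq)}, which is precisely the content recorded in Theorem \ref{th: dimension F_s not red reg seq} and Corollary \ref{coro: number systems not H}.
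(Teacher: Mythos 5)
Your decomposition of $B_1$ into $\mathcal{A}$ (not a set-theoretic complete intersection) and $\mathcal{B}$ (a set-theoretic complete intersection whose ideal fails to be radical, hence, by Serre's criterion applied to the Cohen--Macaulay quotient, singular along a whole component of dimension $r-s$) is sound and is in the same spirit as the paper. The genuine gap sits exactly at the point you yourself flag as ``the main obstacle'': your incidence count stalls at codimension $r-s+1$, and no refinement of the fiber-dimension theorem applied to your affine incidence variety $\Gamma$ will produce $r-s+2$ on its own. In the paper the extra $1$ does not come from a finer count; it comes from two nontrivial facts, imported from Benoist, about a different incidence variety, namely the projective one $W=\{(\bfs F_s,\bfs x)\in \mathcal{F}_{\bfs d_s}(\overline{K})\times\Pp^r : \bfs F_s^h(\bfs x)=0,\ \Delta_i(\bfs x)=0\ (1\le i\le N)\}$ built from the homogenized system and the maximal minors of its Jacobian: (i) $W$ is irreducible of dimension $\dim\mathcal{F}_{\bfs d_s}(\overline{K})-1$, and (ii) $\pi(W)$ is a hypersurface, so the generic fiber of $\pi|_W$ is finite. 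Together (i) and (ii) force the jumping locus $W_{r-s}=\{(\bfs F_s,\bfs x)\in W : \dim\pi^{-1}(\bfs F_s)\ge r-s\}$ to be a \emph{proper} closed subset of the irreducible $W$, whence $\dim W_{r-s}\le \dim\mathcal{F}_{\bfs d_s}(\overline{K})-2$; only after this loss of one dimension does the fiber-dimension theorem give $\dim\pi(W_{r-s})\le \dim\mathcal{F}_{\bfs d_s}(\overline{K})-(r-s+2)$. You have no analogue of (i) or (ii) for $\Gamma$ (its irreducibility is not known, and components of $\Gamma$ could in principle consist entirely of large fibers), so your argument cannot close. Note also that the paper works with homogenizations for a second reason: radicality of $I(\bfs F_s)$ is deduced from radicality of $I(\bfs F_s^h)$ via the specialization $X_0\mapsto 1$, and the empty-variety locus $B_0$ and the loci where $\deg F_i<d_i$ must be bounded separately ($B_0$ via initial forms and a Benoist-type bound on the codimension of forms containing a fixed component); your sketch folds these cases into $\mathcal{A}$ without an estimate.

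The quantitative half has a second, independent gap. To apply $|V(\fq)|\le \deg V\cdot q^{\dim V}$ together with B\'ezout you must exhibit a variety of codimension $r-s+2$ containing $B_1$ that is cut out by \emph{explicit} equations of controlled degree; but the dominant piece $\pi(W_{r-s})$ is the image of a projection and carries no obvious equations. The paper manufactures them by elimination: it first proves, over a purely transcendental extension, that the condition $\dim\pi^{-1}(\bfs F_s)\ge r-s$ can be tested on a single generic linear combination $\Delta_{\bfs\Theta_1}$ of the $N$ minors, then cuts with $r-s$ generic linear forms so that nonemptiness becomes the vanishing of a multivariate resultant, whose coefficients are multihomogeneous of degree $\delta\,\sigma(1+1/d_i)$ in the coefficients of $F_i$; finally it extracts $r-s+2$ linear combinations of these eliminant equations defining a variety of pure codimension $r-s+2$, and B\'ezout plus the point bound yield $(2\,s\,\sigma\,\delta)^{r-s+2}q^{\dim\mathcal{F}_{\bfs d_s}-r+s-2}$. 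Your proposal posits the existence of such a low-degree, correct-codimension variety but gives no construction; since the entire constant in \eqref{eq: bound cardinality B_1(Fq)} is determined by the degrees of these equations, this step cannot be treated as routine.
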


For perspective, in \cite[Theorem 3.5]{GaMa19} and \cite[Corollary
3.7]{GaMa19} it is shown that the corresponding set $B_1^*\subset
\mathcal{F}_{\bfs d_s}^*(\overline{K})$ satisfies $\dim B_1^*\le
\dim \mathcal{F}_{\bfs d_s}^*(\overline{K})-1$ and the corresponding
probability ${\sf P}_1^*$ is bounded in the following way:
$${\sf P}_1^*\le\frac{2\,s\,\sigma\,\delta}{q}.
$$
The bounds \eqref{eq: bound dim B_1} and \eqref{eq: bound
cardinality B_1(Fq)} represent significant improvements of these
results.

Concerning the second accident, for $K=\fq$ it is well-known that,
if the variety $V(\bfs F_s)\subset\cfq{\!}^r$ defined by $\bfs
F_s\in\mathcal{F}_{\bfs d_s}$ is an absolutely irreducible complete
intersection, then there are good estimates on the deviation from
the expected number of points of $V(\bfs F_s)$ in $\fq^r$ (see,
e.g., \cite{GhLa02a}, \cite{CaMa06}). This motivates the study of
the ``frequency'' with which such a geometric property arises.

For $s=1$, the variety defined by a single polynomial $F_1\in
\fq[X_1\klk X_r]$ is a hypersurface, which is absolutely irreducible
if the polynomial $F_1$ is. Counting irreducible multivariate
polynomials over a finite field is a classical subject which goes
back to the works of \cite{Carlitz63}, \cite{Carlitz65} and
\cite{Cohen68}; see \cite[Section 3.6]{MuPa13} for further
references. In \cite{GaViZi13}, exact formulas on the number of
absolutely irreducible multivariate polynomials over a finite field
and easy--to--use approximations are provided. The first result on
the number of sequences of polynomials $F_1\klk F_s$ over a finite
field defining an absolutely irreducible projective variety are due
to \cite{GaMa19}. We have the following result (see Theorem \ref{th:
dimension F_s not abs irred} and Corollary \ref{coro: number systems
not AI}).
\begin{theorem}\label{th: F_s not AI}
Let $K$ be a field and let $\overline{K}$ be its algebraic closure.
For $\bfs d_s\in\N^s$, denote by $B_2\subset \mathcal{F}_{\bfs
d_s}(\overline{K})$ the set of $\bfs F_s\in \mathcal{F}_{\bfs
d_s}(\overline{K})$ such that $V(\bfs F_s)$ is not an irreducible
complete intersection. Then
\begin{equation}\label{eq: bound dim B_2}
\dim B_2\le \dim \mathcal{F}_{\bfs d_s}(\overline{K})-r+s-1.
\end{equation}
Further, for $K=\fq$ and $\bfs d_s:= (d_1,\ldots,d_s)\in\N^s$ with
$d_i\ge 2$ for $1\le i\le s$, let $B_2(\fq)=B_2\cap
\mathcal{F}_{\bfs d_s}$, $\delta:=d_1\cdots d_s$ and
$\sigma:=d_1+\cdots +d_s -s$. Considering the uniform probability in
$\mathcal{F}_{\bfs d}^s$, the probability ${\sf P}_2$ of $B_2(\fq)$
is bounded in the following way:
\begin{equation}\label{eq: bound cardinality B_2(Fq)}
{\sf P}_2\le \bigg(\frac{2\,s\,\sigma^2\delta}{q}\bigg)^{r-s+1}.
\end{equation}
\end{theorem}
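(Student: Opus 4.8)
The plan is to split $B_2$ according to the nature of the defect and to treat the genuinely new phenomenon---geometric reducibility of an equidimensional complete intersection---by a singularity/incidence argument, deferring the dimension defect to Theorem~\ref{th: F_s not red reg seq}. Write $N:=\dim\mathcal{F}_{\bfs d_s}(\overline{K})$ and set $m:=r-s$. Decompose $B_2=B_2'\cup B_2''$, where $B_2'$ consists of those $\bfs F_s$ with $\dim V(\bfs F_s)\neq m$, and $B_2''$ of those for which $V(\bfs F_s)$ has pure dimension $m$ but is reducible. If $\dim V(\bfs F_s)>m$ then $\bfs F_s$ fails to define an ideal-theoretic complete intersection, so $B_2'\subseteq B_1$, and Theorem~\ref{th: F_s not red reg seq} gives $\dim B_2'\le N-m-2\le N-(r-s+1)$. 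Thus it suffices to bound $\dim B_2''$.

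\emph{Reducibility forces a large singular locus.} First I would use that a complete intersection of positive dimension is connected in codimension one (Hartshorne's connectedness theorem): if $V(\bfs F_s)=V_1\cup V_2$ is a nontrivial decomposition into pure $m$-dimensional closed subsets, then $Z:=V_1\cap V_2$ satisfies $\dim Z\ge m-1$ and $Z\subseteq \mathrm{Sing}\,V(\bfs F_s)$. Consider the incidence variety $\Sigma=\{(\bfs F_s,x): F_1(x)=\dots=F_s(x)=0,\ \mathrm{rank}\,\bigl(\partial F_i/\partial X_j\bigr)(x)\le s-1\}$. Projecting to $x$ and using that, for fixed generic $x$, the value and the gradient of each $F_i$ are independent linear functionals of the coefficients of $F_i$, the fiber has codimension $s+(r-s+1)=r+1$, whence $\dim\Sigma\le N-1$. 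Since every fiber of $\Sigma\to\mathcal{F}_{\bfs d_s}(\overline{K})$ over $B_2''$ has dimension $\ge m-1$, restricting $\Sigma$ over $B_2''$ gives $\dim B_2''+(m-1)\le \dim\Sigma\le N-1$, i.e. $\dim B_2''\le N-(r-s)$. To gain the last unit of codimension I would refine $\Sigma$: at a generic point $z\in Z$ the tangent cone of $V(\bfs F_s)$ is the union $C_zV_1\cup C_zV_2$ of two distinct linear spaces, hence reducible, which is a genuine second-order degeneration beyond the rank drop. Imposing this extra condition (read off from the Hessian data) cuts the relevant stratum of $\Sigma$ down by one further dimension, giving $\dim\Sigma_{\mathrm{red}}\le N-2$ and therefore $\dim B_2''\le N-(r-s+1)$, which is \eqref{eq: bound dim B_2}.

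\emph{The finite-field estimate.} For $K=\fq$ the plan is to exhibit an explicit ambient variety for $B_2$ of controlled degree and then count. Reducing $V(\bfs F_s)$ by $m-1$ generic hyperplane sections followed by a generic linear projection produces a plane curve of degree $\delta$ whose coefficients are resultants (Chow forms) in the coefficients of $\bfs F_s$; geometric reducibility of $V(\bfs F_s)$ forces this curve to be reducible, and the latter is governed by the vanishing of suitable discriminant/subresultant forms of degree $\le 2\,s\,\sigma^2\delta$ in the $\bfs F_s$-coefficients. In this way one locates $B_2$ inside the zero set $W$ of such forms. Combining this with \eqref{eq: bound dim B_2}, the relevant union of components has dimension $\le N-(r-s+1)$ and, by B\'ezout, degree $\le (2\,s\,\sigma^2\delta)^{r-s+1}$. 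The standard estimate $|W(\fq)|\le \deg W\cdot q^{\dim W}$ then yields ${\sf P}_2=|B_2(\fq)|/q^{N}\le (2\,s\,\sigma^2\delta)^{r-s+1}\,q^{\,N-(r-s+1)}/q^{N}=(2\,s\,\sigma^2\delta/q)^{r-s+1}$, which is \eqref{eq: bound cardinality B_2(Fq)}.

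The main obstacle is the gain of the final unit of codimension for $B_2''$: the rank-drop incidence variety only records that reducibility forces an $(m-1)$-dimensional singular locus, which by itself yields codimension $r-s$, the same order as an irreducible complete intersection that is singular along a divisor. Separating true reducibility from such equisingular irreducible behavior requires controlling the second-order structure---the splitting of the tangent cone into two linear components---and proving that this genuinely costs one extra dimension in $\Sigma$ uniformly is the delicate point. A secondary difficulty is the explicit degree bookkeeping for the resultant/discriminant forms needed to reach the constant $2\,s\,\sigma^2\delta$ in \eqref{eq: bound cardinality B_2(Fq)}.
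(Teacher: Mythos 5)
Your dimension argument has a fatal gap at its central step: Hartshorne's connectedness theorem does not apply to \emph{affine} complete intersections, so the claim that reducibility of a pure $(r-s)$-dimensional $V(\bfs F_s)$ forces $\dim (V_1\cap V_2)\ge r-s-1$ is false --- the components of an affine complete intersection need only meet at infinity. Concretely, take $F_1:=X_1^2-X_1$ and $F_i:=X_i$ for $2\le i\le s$ (this lies in $\mathcal{F}_{\bfs d_s}(\overline{K})$ for any $\bfs d_s$): then $V(\bfs F_s)$ is the disjoint union of two linear subvarieties of dimension $r-s$, so it is reducible and of pure dimension $r-s$, yet nonsingular, since the affine Jacobian has rank $s$ at every point of $V(\bfs F_s)$ in any characteristic. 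Such a system lies in your $B_2''$, but the fiber of your incidence variety $\Sigma$ over it is empty, so the inequality $\dim B_2''+(r-s-1)\le\dim\Sigma$ is unjustified and the entire stratum of systems with pairwise disjoint components escapes your argument. This is exactly why the paper homogenizes: its incidence variety $W$ in \eqref{eq: incidence var W} is built from $\bfs F_s^h$ and the maximal minors of the \emph{projective} Jacobian; a projective complete intersection of positive dimension is connected, so affine components that are disjoint in $\A^r$ create singular points at infinity that $W$ does detect. Moreover, the step you yourself flag as delicate --- gaining the last unit of codimension from ``tangent-cone splitting'' and ``Hessian data'' --- is not substantiated, and the paper needs no second-order analysis at all: since $W$ is irreducible of dimension $\dim\mathcal{F}_{\bfs d_s}(\overline{K})-1$ and $\pi(W)$ is a hypersurface (Benoist), the stratum $W_{r-s-1}$ of fibers of dimension $\ge r-s-1$ is a proper closed subset of $W$, hence of dimension at most $\dim\mathcal{F}_{\bfs d_s}(\overline{K})-2$, and the fiber-dimension theorem yields $\dim\pi(W_{r-s-1})\le\dim\mathcal{F}_{\bfs d_s}(\overline{K})-(r-s+1)$; irreducibility off this locus then follows from normality (singular locus of codimension $\ge 2$ in $V(\bfs F_s^h)$, Serre's criterion, connectedness), and irreducibility of $V(\bfs F_s)$ by comparison with its projective closure.

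There are secondary problems as well. Your decomposition $B_2=B_2'\cup B_2''$ is incomplete: since $A_2\subseteq A_1$, the set $B_2$ also contains systems whose variety is irreducible of pure dimension $r-s$ but whose ideal is not radical; these lie in neither $B_2'$ nor $B_2''$ (they do lie in $B_1$, so they could be folded into the first case, but this must be said). Also, invoking Theorem \ref{th: F_s not red reg seq} requires $d_i\ge 2$, whereas \eqref{eq: bound dim B_2} is asserted for all $\bfs d_s\in\N^s$; the paper covers this via Remark \ref{rem: dimension B_0 for d_i=1}. Finally, the finite-field half is a plan rather than a proof: the existence of discriminant/subresultant forms of degree at most $2\,s\,\sigma^2\delta$ whose zero set contains $B_2$ is precisely what must be constructed (the paper does it with multivariate resultants of $\bfs F_s^h$, two generic combinations $\Delta_{\bfs\Theta_1},\Delta_{\bfs\Theta_2}$ of the Jacobian minors, and generic linear forms), and before applying B\'ezout one needs a variety of \emph{pure} codimension $r-s+1$ and controlled degree containing $B_2$, as in Lemma \ref{lemma: existence Gamma_1,...,Gamma_(r-s+1)}: knowing only that $B_2$ has dimension at most $\dim\mathcal{F}_{\bfs d_s}(\overline{K})-(r-s+1)$ and lies in the common zero set of forms of degree $D$ does not by itself bound the degree of a containing variety by $D^{r-s+1}$. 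Any such equations derived from affine singularities would, in addition, inherit the gap above, since they miss the smooth reducible systems entirely.
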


In \cite[Theorem 4.5]{GaMa19} and \cite[Corollary 4.6]{GaMa19} it is
shown that the corresponding set $B_2^*\subset \mathcal{F}_{\bfs
d_s}^*(\overline{K})$ satisfies $\dim B_2^*\le \dim
\mathcal{F}_{\bfs d_s}^*(\overline{K})-1$ and the probability ${\sf
P}_2^*$ is bounded in the following way:
$${\sf P}_2^*\le\frac{3\,s\,\sigma^2\delta}{q}.
$$
The bounds \eqref{eq: bound dim B_2} and \eqref{eq: bound
cardinality B_2(Fq)} significantly improve these results.

The paper is organized as follows. In Section \ref{section:
notation, notations} we briefly recall the notions and notations of
algebraic geometry and finite fields we use. In Section
\ref{section: dimension defective systems} we deal with an arbitrary
field $K$, and establish the results on the dimension of the set of
defective systems of Theorems \ref{th: F_s not red reg seq} and
\ref{th: F_s not AI}. Then, in Section \ref{section: number
defective systems} we deal with a finite field $\fq$ and establish
the results on the number of defective systems of Theorems \ref{th:
F_s not red reg seq} and \ref{th: F_s not AI}.

%
%
\section{Preliminaries}
\label{section: notation, notations}
We use standard notions and notations of commutative algebra and
algebraic geometry as can be found in, e.g., \cite{Harris92},
\cite{Kunz85} or \cite{Shafarevich94}.

Let $K$ be field and $\overline{K}$ its algebraic closure. We denote
by $\A^r:=\A^r(\overline{K})$ the $r$--dimensional affine space
$\overline{K}{}^r$ and by $\Pp^r:=\Pp^r(\overline{K})$ the
$r$--dimensional projective space over $\overline{K}$. By a {\em
projective variety defined over} $K$ (or a projective $K$--variety
for short) we mean a subset $V\subset \Pp^r$ of common zeros of
homogeneous polynomials $F_1,\ldots, F_m \in K[X_0 ,\ldots, X_r]$.
Correspondingly, an {\em affine variety of $\A^r$ defined over} $K$
(or an affine $K$--variety) is the set of common zeros in $\A^r$ of
polynomials $F_1,\ldots, F_{m} \in
K[X_1,\ldots, X_r]$. 
We shall frequently denote by $V(\bfs F_m)=V(F_1\klk F_m)$ or
$\{\bfs F_m=\bfs 0\}=\{F_1=0\klk F_m=0\}$ the affine or projective
$K$--variety consisting of the common zeros of the polynomials $\bfs
F_m:=(F_1\klk F_m)$.

In what follows, unless otherwise stated, all results referring to
varieties in general should be understood as valid for both
projective and affine varieties. A $K$--variety $V$ is $K$--{\em
irreducible} if it cannot be expressed as a finite union of proper
$K$--subvarieties of $V$. Further, $V$ is {\em absolutely
irreducible} if it is $\overline{K}$--irreducible. Any $K$--variety
$V$ can be expressed as an irredundant union $V=\mathcal{C}_1\cup
\cdots\cup\mathcal{C}_s$ of irreducible (absolutely irreducible)
$K$--varieties, unique up to reordering, which are called the {\em
irreducible} ({\em absolutely irreducible}) $K$--{\em components} of
$V$.

For a $K$--variety $V$ contained in $\Pp^r$ or $\A^r$, we denote by
$I(V)$ its {\em defining ideal}, namely the set of polynomials of
$K[X_0,\ldots, X_r]$, or of $K[X_1,\ldots, X_r]$, vanishing on $V$.
The {\em coordinate ring} $K[V]$ of $V$ is the quotient ring
\linebreak $K[X_0,\ldots,X_r]/I(V)$ or $K[X_1,\ldots,X_r]/I(V)$. The
{\em dimension} $\dim V$ of $V$ is the length $n$ of the longest
chain $V_0\varsubsetneq V_1 \varsubsetneq\cdots \varsubsetneq V_n$
of nonempty irreducible $K$--varieties contained in $V$. We say that
$V$ has {\em pure dimension} $n$ (or simply it is {\em
equidimensional}) if all the irreducible $K$--components of $V$ are
of dimension $n$.

A $K$--variety in $\Pp^r$ or $\A^r$ of pure dimension $r-1$ is
called a $K$--{\em hypersurface}. A $K$--hypersurface in $\Pp^r$ (or
$\A^r$) is the set of zeros of a single nonzero polynomial of
$K[X_0\klk X_r]$ (or of $K[X_1\klk X_r]$).

The {\em degree} $\deg V$ of an irreducible $K$-variety $V$ is the
maximum number of points lying in the intersection of $V$ with a
linear space $L$ of codimension $\dim V$, for which $V\cap L$ is a
finite set. More generally, following \cite{Heintz83} (see also
\cite{Fulton84}), if $V=\mathcal{C}_1\cup\cdots\cup \mathcal{C}_s$
is the decomposition of $V$ into irreducible $K$--components, we
define the degree of $V$ as
$$\deg V:=\sum_{i=1}^s\deg \mathcal{C}_i.$$
We shall use the following {\em B\'ezout inequality} (see
\cite{Heintz83}, \cite{Fulton84}, \cite{Vogel84}): if $V$ and $W$
are $K$--varieties of the same ambient space, then
\begin{equation}\label{eq: Bezout}
\deg (V\cap W)\le \deg V \cdot \deg W.
\end{equation}

Let $V\subset\A^r$ be a $K$--variety and $I(V)\subset K[X_1,\ldots,
X_r]$ its defining ideal. Let $\bfs x$ be a point of $V$. The {\em
dimension} $\dim_{\bfs x}V$ {\em of} $V$ {\em at} $\bfs x$ is the
maximum of the dimensions of the irreducible $K$--components of $V$
that contain $\bfs x$. If $I(V)=(F_1,\ldots, F_m)$, the {\em tangent
space} $T_{\bfs x}V$ {\em to $V$ at $\bfs x$} is the kernel of the
Jacobian matrix $(\partial F_i/\partial X_j)_{1\le i\le m,1\le j\le
r}(\bfs x)$ of the polynomials $F_1,\ldots, F_m$ with respect to
$X_1,\ldots, X_r$ at $\bfs x$. We have (see, e.g., \cite[page
94]{Shafarevich94})
$$\dim T_{\bfs x}V\ge \dim_{\bfs x}V.$$
The point $\bfs x$ is {\em regular} if $\dim T_{\bfs x}V=\dim_xV$.
Otherwise, the point $x$ is called {\em singular}. The set of
singular points of $V$ is the {\em singular locus}
$\mathrm{Sing}(V)$ of $V$; a variety is called {\em nonsingular} if
its singular locus is empty. For a projective variety, the concepts
of tangent space, regular and singular point can be defined by
considering an affine neighborhood of the point under consideration.
%
%
\subsection{Regular sequences}
\label{subsec: reduced reg sequences} Let $K$ be a field. A
\emph{set-theoretic complete intersection} is a variety $V(F_1 \klk
F_s) \subseteq \A^r$ defined by $s\le r$ polynomials $F_1 \klk
F_s\in K[X_1 \klk X_r]$ which is of pure dimension $r-s$. If $s\le
r+1$ homogeneous polynomials $F_1 \klk F_s$ in $K[X_0 \klk X_r]$
define a projective $K$-variety $V(F_1 \klk F_s)\subseteq
\mathbb{P}^r$ which is of pure dimension $r-s$ (the case $r-s=-1$
meaning  that $V(F_1,\ldots,F_s)$ is the empty set), then the
variety is called a set-theoretic complete intersection. Elements
$F_1 \klk F_s\in K[X_1 \klk X_r]$ form a \emph{regular sequence} if
the ideal $(F_1 \klk F_s)$ they define in $K[X_1 \klk X_r]$ is a
proper ideal, $F_1$ is nonzero and, for $2\le i \le s$, $F_i$ is
neither zero nor a zero divisor in $K[X_1 \klk X_r]/(F_1 \klk
F_{i-1})$. If in addition $(F_1 \klk F_s)$ is a radical ideal of
$K[X_1 \klk X_r]$, then we say that $V(F_1 \klk F_s)$ is an
\emph{ideal-theoretic complete intersection}. In what follows we
shall use the following result (see, e.g., \cite[Chapitre 3,
Remarque 2.2]{Lejeune84}).
\begin{lemma}\label{lemma: reg seq and complete int}
If $F_1 \klk F_s$ are  $s\le r+1$ homogeneous polynomials in $K[X_0
\klk X_r]\setminus K$, the following conditions  are equivalent:
   \begin{itemize}
     \item $F_1 \klk F_s$ define a set-theoretic complete intersection projective variety of $\mathbb{P}^r$.
     \item $F_1 \klk F_s$  is a regular sequence in $K[X_0 \klk X_r]$.
   \end{itemize}
\end{lemma}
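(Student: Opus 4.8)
The plan is to translate both conditions into statements about the Krull dimension of the quotient ring and then invoke the standard characterisation of regular sequences in a Cohen--Macaulay ring. Write $R:=K[X_0\klk X_r]$, a Cohen--Macaulay (indeed regular) graded ring of dimension $r+1$, let $\mathfrak{m}:=(X_0\klk X_r)$ be its irrelevant maximal ideal, and set $I:=(F_1\klk F_s)$ and $A:=R/I$. Since each $F_i$ lies in $R\setminus K$ and is homogeneous, it has positive degree, so $F_1\klk F_s\in\mathfrak{m}$; in particular $I\subseteq\mathfrak{m}$ is proper and $F_1\neq 0$, so the ``proper ideal'' and ``$F_1$ nonzero'' requirements in the definition of a regular sequence hold automatically. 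The bookkeeping I need is the cone-to-projective dimension shift: for the homogeneous ideal $I$ one has $\dim A=\dim_{\mathbb{P}^r}V(F_1\klk F_s)+1$ when the projective variety is nonempty, and $\dim A=0$ exactly when $V(F_1\klk F_s)=\varnothing$ (the cone reduces to the origin). Thus the set-theoretic complete intersection condition, that $V(F_1\klk F_s)$ have pure dimension $r-s$ (with the convention $r-s=-1$ meaning empty), is equivalent to asserting that $A$ is equidimensional of dimension $(r+1)-s$.

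Next I would record the purely dimension-theoretic reformulation and show the separate purity hypothesis is redundant. By Krull's generalised principal ideal theorem every minimal prime of $I=(F_1\klk F_s)$ has height at most $s$, so $\dim A\ge (r+1)-s$, with equality iff some minimal prime has height exactly $s$. Since $R$ is a domain finitely generated over a field it is catenary and equidimensional, whence $\dim A=\dim R-\operatorname{ht}I=(r+1)-\operatorname{ht}I$; combining this with the Krull bound shows that the single numerical equality $\dim A=(r+1)-s$ already forces \emph{every} minimal prime of $I$ to have height exactly $s$, i.e.\ forces $A$ to be equidimensional. Consequently the set-theoretic complete intersection property is equivalent to the lone condition $\dim A=(r+1)-s$, with no independent equidimensionality to verify.

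It then remains to identify this condition with regularity of the sequence, and here the Cohen--Macaulay hypothesis on $R$ does the work. The standard characterisation (e.g.\ Bruns--Herzog, Theorem 2.1.2, or Matsumura, Theorem 17.4), applied in the graded setting or after localising at $\mathfrak{m}$ and using that a sequence of homogeneous elements of positive degree is $R$-regular iff it is $R_{\mathfrak{m}}$-regular, with $R_{\mathfrak{m}}$ Cohen--Macaulay of the same dimension, states that homogeneous $F_1\klk F_s\in\mathfrak{m}$ in a Cohen--Macaulay ring form a regular sequence iff $\dim R/(F_1\klk F_s)=\dim R-s=(r+1)-s$. One implication is elementary and needs no Cohen--Macaulayness: if the sequence is regular then each $F_i$ avoids all associated, hence all minimal, primes of $(F_1\klk F_{i-1})$, so the height rises by one at each step and $\operatorname{ht}I=s$. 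The content is the converse, that in a Cohen--Macaulay ring a system of $s$ elements cutting the dimension by $s$ is automatically regular; granting it, the chain of equivalences closes. The points requiring care are the passage between the graded ring and its localisation at $\mathfrak{m}$ (so the local Cohen--Macaulay theory applies to the homogeneous data) and the degenerate case $s=r+1$, where $\dim A=0$ means $I$ is $\mathfrak{m}$-primary and the projective variety is empty, matching the stated convention. I expect this converse implication, extracting regularity from the dimension count, to be the main conceptual step, since it is precisely where the Cohen--Macaulay property of the polynomial ring is indispensable.
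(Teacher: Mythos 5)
Your argument is correct. A point of comparison first: the paper does not actually prove this lemma; it quotes it as a known result with a citation (to Lejeune--Jalabert, Chapitre 3, Remarque 2.2), so there is no in-paper proof to measure against, and supplying the standard dimension-theoretic argument, as you do, is exactly what that citation stands in for. Your three pivots all check out. (i) The cone shift $\dim A=\dim_{\Pp^r}V+1$ (with $\dim A=0$ precisely for empty $V$) correctly translates the set-theoretic complete intersection condition, including the degenerate case $s=r+1$ of the paper's convention. (ii) Your observation that equidimensionality need not be verified separately is right and is the one genuinely delicate step: Krull's height theorem bounds the height of \emph{every} minimal prime of $I$ by $s$, while the dimension formula $\dim R/P+\operatorname{ht}P=\dim R$, valid for primes in a finitely generated domain over a field, converts $\dim A=(r+1)-s$ into $\min_P\operatorname{ht}P=s$; the two together force all minimal primes to have height exactly $s$, hence purity. (iii) The Cohen--Macaulay characterization (Matsumura, Theorem 17.4, or Bruns--Herzog, Theorem 2.1.2), that elements of the maximal ideal dropping the dimension by their number form a regular sequence, closes the equivalence; the one implication you prove by hand (regular sequence implies height $s$) is also correct, since a non-zerodivisor avoids all associated, hence all minimal, primes. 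The only steps you delegate rather than argue are the graded-to-local transfer (legitimate because associated and minimal primes of graded ideals are graded, hence contained in $\mathfrak{m}=(X_0\klk X_r)$, so both regularity of the sequence and heights are preserved under localization at $\mathfrak{m}$) and the converse Cohen--Macaulay implication itself; both are standard and correctly attributed, which matches the level of rigor of the paper, which delegates the entire lemma to the literature.
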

%
%
\subsection{Rational points}
We denote by $\A^r(\fq)$ the $n$--dimensional $\fq$--vector space
$\fq^r$. For an affine variety $V\subset\A^r$, we denote by $V(\fq)$
the set of $\fq$--rational points of $V$, namely $V(\fq):=V\cap
\A^r(\fq)$ in the affine case. For an affine variety $V$ of
dimension $n$ and degree $\delta$ we have the upper bound (see,
e.g., \cite[Lemma 2.1]{CaMa06})
\begin{equation}\label{eq: upper bound -- affine gral}
   |V(\fq)|\leq \delta q^n.
\end{equation}
%
%
\section{The dimension of the set of defective systems}
\label{section: dimension defective systems}
For $1<s<r$ and $\bfs d_s:=(d_1,\ldots,d_s)\in\N^s$, we recall that
$\mathcal{F}_{\bfs d_s}(\overline{K})$ denotes the set of $s$-tuples
$\bfs F_s:=(F_1, \dots, F_s)$ with $F_i\in\mathcal{F}_{d_i}:=\{F\in
\overline{K}[X_1, \dots, X_r]:\deg F \le d_i\}$ for $1\le i\le s$,
where $K$ is an arbitrary field.

In this section we show that there are ``few'' $\bfs
F_s\in\mathcal{F}_{\bfs d_s}(\overline{K})$ which either do not
define an ideal-theoretic complete intersection or the variety
$V(\bfs F_s)$ is not irreducible. More precisely, for each of these
``deficiencies'', we shall show that the set of deficient systems in
contained in a variety of positive codimension of $\mathcal{F}_{\bfs
d_s}(\overline{K})$. For this purpose, we shall consider the
following incidence variety
\begin{equation}\label{eq: incidence var W}
W:=\{(\bfs F_s,\bfs x)\in\mathcal{F}_{\bfs d_s}(\overline{K})\times
\Pp^r(\overline{K}):\bfs F_s^h(\bfs x)=0, \Delta_i(\bfs x)=0\ (1\le
i\le N)\},
\end{equation}
where $\bfs F_s^h$ denote the homogenization of $\bfs F_s$ with
homogenizing variable $X_0$ and $\Delta_1,\ldots,\Delta_N$ denote
the maximal minors of the Jacobian matrix of $\bfs F_s^h$ with
respect to $X_0,\ldots,X_r$.

A number of remarks are in order. First, in the proof of \cite[Lemma
3.2]{Benoist12} it is shown the following assertion.
\begin{remark}
$W$ is irreducible of dimension $\dim \mathcal{F}_{\bfs
d_s}(\overline{K})-1$.
\end{remark}

Let $\pi:W\to \mathcal{F}_{\bfs d_s}(\overline{K})$ be the
projection on the first coordinate. By \cite[Lemma 3.2, Corollary
3.3 and Proposition 4.2]{Benoist12} we deduce the following remark.
\begin{remark}
$\pi(W)$ is an irreducible hypersurface of $\mathcal{F}_{\bfs
d_s}(\overline{K})$.
\end{remark}

%
%
\subsection{Systems not defining an ideal-theoretic complete intersection}
\label{subsec: dimension systems not H}
First we analyze the set of systems which do not define an
ideal-theoretic complete intersection. More precisely, denote
\begin{align}
A_1&:=\{\bfs F_s\in \mathcal{F}_{\bfs d_s}(\overline{K}):\bfs F_s\textrm{ defines an ideal-theoretic complete intersection}\}, \notag\\
B_1&:=\mathcal{F}_{\bfs d_s}(\overline{K})\setminus A_1.
\label{def: B_1}\end{align}
%

Denote by $W_{r-s}$ the set of $(\bfs F_s,\bfs x)\in W$ such that
$\pi^{-1}(\bfs F_s)$ has dimension at least $r-s$.  According to
\cite[\S I.6.3, Corollary]{Shafarevich94}, $\pi(W_{r-s})$ is closed,
and the theorem on the dimension of fibers (see, e.g., \cite[\S
I.6.3, Theorem 7]{Shafarevich94}) implies that
\begin{equation}\label{eq: upper bound W_(r-s)}
\dim \mathcal{F}_{\bfs d_s}(\overline{K})-2\ge \dim W_{r-s}\ge\dim
\pi (W_{r-s})+(r-s).
\end{equation}

We have the following result.
\begin{lemma}\label{lemma: homogenized system is red reg seq}
For any $\bfs F_s\in \mathcal{F}_{\bfs d_s}(\overline{K})\setminus
\pi (W_{r-s})$, we have the following assertions:
\begin{itemize}
  \item $V(\bfs F_s^h)$ is of pure dimension $r-s$.
  \item The ideal $I(\bfs F_s^h)$ is radical.
\end{itemize}
\end{lemma}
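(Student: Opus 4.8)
The plan is to extract both assertions from the single hypothesis that $\bfs F_s\notin\pi(W_{r-s})$, which by the definition of $W$ means precisely that the rank-drop locus
\[
\Sigma:=\{\bfs x\in V(\bfs F_s^h):\operatorname{rank}\operatorname{Jac}(\bfs F_s^h)(\bfs x)<s\}=\pi^{-1}(\bfs F_s)
\]
has dimension at most $r-s-1$ (possibly being empty). Throughout I would pass to the affine cone $\widehat V\subseteq\A^{r+1}$ of $V(\bfs F_s^h)$, so that a projective component of dimension $d$ corresponds to a cone of affine dimension $d+1$. The basic tool is that, since the generators $F_1^h\klk F_s^h$ all vanish on $\widehat V$, their differentials kill every tangent vector, giving the containment $T_{\hat{\bfs x}}\widehat V\subseteq\ker\operatorname{Jac}(\bfs F_s^h)(\hat{\bfs x})$; combined with $\dim T_{\hat{\bfs x}}\widehat V\ge\dim_{\hat{\bfs x}}\widehat V$ this yields $\dim\ker\operatorname{Jac}(\bfs F_s^h)(\hat{\bfs x})\ge\dim_{\hat{\bfs x}}\widehat V$ at \emph{every} point, singular ones included.

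For the pure dimension statement, I would first recall that every irreducible component of $V(\bfs F_s^h)$, being cut out by $s$ equations, has dimension at least $r-s$ (Krull's height theorem), so it suffices to exclude components of dimension $\ge r-s+1$. If $C$ were such a component, then for every $\bfs x\in C$ one has $\dim_{\hat{\bfs x}}\widehat V\ge\dim\widehat C\ge r-s+2$, whence
\[
\operatorname{rank}\operatorname{Jac}(\bfs F_s^h)(\bfs x)=(r+1)-\dim\ker\operatorname{Jac}(\bfs F_s^h)(\bfs x)\le(r+1)-(r-s+2)=s-1<s .
\]
Thus $C\subseteq\Sigma$, so $\dim\Sigma\ge\dim C\ge r-s$, contradicting $\bfs F_s\notin\pi(W_{r-s})$. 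Hence no component exceeds dimension $r-s$, and $V(\bfs F_s^h)$ is of pure dimension $r-s$.

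For radicality of $(\bfs F_s^h)$ I would invoke Serre's criterion, that a Noetherian ring is reduced iff it satisfies (R$_0$) and (S$_1$), applied to $A:=\overline K[X_0\klk X_r]/(\bfs F_s^h)$. Having established that $V(\bfs F_s^h)$ is a set-theoretic complete intersection, Lemma \ref{lemma: reg seq and complete int} shows $\bfs F_s^h$ is a regular sequence, so $A$ is Cohen--Macaulay and in particular satisfies (S$_1$), i.e.\ has no embedded components. For (R$_0$) I would use that $\Sigma$ has dimension strictly less than $r-s$ and therefore contains no generic point of the pure $(r-s)$-dimensional scheme; at each such generic point the Jacobian has full rank $s$, so the Jacobian criterion for complete intersections forces regularity, hence reducedness, of the local ring there. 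Serre's criterion then gives that $A$ is reduced, that is, $(\bfs F_s^h)$ is radical.

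The hard part will be the tangent-space bookkeeping of the first paragraph: one must keep track that the relevant object is the kernel of the Jacobian of the \emph{chosen} generators $\bfs F_s^h$ rather than of the full vanishing ideal, and justify the inequality $\dim\ker\operatorname{Jac}(\bfs F_s^h)(\hat{\bfs x})\ge\dim_{\hat{\bfs x}}\widehat V$ uniformly over an entire component via the containment $T_{\hat{\bfs x}}\widehat V\subseteq\ker\operatorname{Jac}(\bfs F_s^h)(\hat{\bfs x})$. Once pure dimension is in hand, the second assertion is a clean application of the Jacobian and Serre criteria, the only delicate point being the bookkeeping between the projective scheme $V(\bfs F_s^h)$ and its graded coordinate ring $A$.
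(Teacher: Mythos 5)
Your proof is correct and follows essentially the same route as the paper's: the hypothesis that the rank-drop locus $\pi^{-1}(\bfs F_s)$ has dimension at most $r-s-1$ forces the Jacobian of $\bfs F_s^h$ to have full rank generically on every component (hence each component has dimension exactly $r-s$, via the tangent-space bound), and radicality follows from Serre's criterion --- (S$_1$) from the complete-intersection/Cohen--Macaulay property and (R$_0$) from the Jacobian criterion at the minimal primes --- which is precisely what the paper's citation of Eisenbud, Theorem 18.15 encapsulates. The only cosmetic difference is that you prove purity by contradiction (a component of excessive dimension would be wholly contained in the rank-drop locus), whereas the paper argues directly at the generic point of each component.
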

\begin{proof}
Observe that each irreducible component of $V(\bfs F_s^h)$ has
dimension at least $r-s$. By the definition of $W_{r-s}$, the set of
points of $V(\bfs F_s^h)$ for which the Jacobian matrix of $\bfs
F_s^h$ is not of full rank, has dimension at most $r-s-1$. This
implies that the Jacobian matrix of $\bfs F_s^h$ at a generic point
of any irreducible component of $V(\bfs F_s^h)$ is of full rank. As
a consequence, all the irreducible components of $V(\bfs F_s^h)$
have dimension $r-s$, which proves the first assertion. 

Further, as the set of points of $V(\bfs F_s^h)$ for which the
Jacobian matrix of $\bfs F_s^h$ is not of full rank, has codimension
at least $1$ in $V(\bfs F_s^h)$, the second assertion is a
consequence of \cite[Theorem 18.15]{Eisenbud95}.
\end{proof}

To establish our bound on the dimension of $B_1$, we need an upper
bound on the dimension of the set of $\bfs F_s$ for which $V(\bfs
F_s)$ is empty. The bound in our next result is probably not
optimal, but sufficient for our purposes.
\begin{lemma}\label{lemma: dimension B_0}
Suppose that $d_i\ge 2$ for $1\le i\le s$. Let
$B_0\subset\mathcal{F}_{\bfs d_s}(\overline{K})$ be the set of $\bfs
F_s:=(F_1,\ldots,F_s)$ such that $\deg F_i=d_i$ for $1\le i\le s$
and $V(\bfs F_s)\subset\A^r(\overline{K})$ is empty. Then $\dim
B_0\le \dim \mathcal{F}_{\bfs d_s}(\overline{K})-r+s-2$.
\end{lemma}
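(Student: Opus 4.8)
The plan is to trace the emptiness of the affine variety back to a degeneracy of the leading forms of $\bfs F_s$, and then to bound the dimension of the resulting locus of leading forms by a computation at a single point. For $\bfs F_s\in B_0$ let $G_i$ denote the degree-$d_i$ homogeneous component (leading form) of $F_i$, so $G_i\neq 0$ since $\deg F_i=d_i$. The condition $V(\bfs F_s)=\emptyset$ in $\A^r$ is equivalent to $V(\bfs F_s^h)\subseteq\{X_0=0\}$, and the trace of $V(\bfs F_s^h)$ on the hyperplane at infinity $\{X_0=0\}\cong\Pp^{r-1}$ is exactly $V(G_1\klk G_s)$. Since $s<r$, the projective variety $V(\bfs F_s^h)\subseteq\Pp^r$ is nonempty of dimension at least $r-s$; being contained in $\{X_0=0\}$, this forces $\dim_{\Pp^{r-1}}V(G_1\klk G_s)\ge r-s$, i.e. $G_1\klk G_s$ fails to be a regular sequence in $\overline{K}[X_1\klk X_r]$.

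Next I would pass to the space of leading forms. Let $H_i$ be the space of forms of degree $d_i$ in $X_1\klk X_r$, and let $\Sigma\subseteq (H_1\setminus\{0\})\times\cdots\times(H_s\setminus\{0\})$ be the locus of tuples with $\dim_{\Pp^{r-1}}V(G_1\klk G_s)\ge r-s$. The map $\bfs F_s\mapsto(G_1\klk G_s)$, defined on the dense open set $\{\deg F_i=d_i\}$ containing $B_0$, is linear and surjective with fibers of dimension $\sum_i\binom{d_i+r-1}{r}=\dim\mathcal{F}_{\bfs d_s}(\overline{K})-\dim(H_1\times\cdots\times H_s)$. As $B_0$ lies in the preimage of $\Sigma$, this gives $\dim B_0\le\dim\mathcal{F}_{\bfs d_s}(\overline{K})-\operatorname{codim}\Sigma$, so the lemma reduces to proving $\operatorname{codim}\Sigma\ge r-s+2$ in $H_1\times\cdots\times H_s$.

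To bound $\dim\Sigma$ I would use the incidence variety $\Phi=\{((G_i),\bfs x):\bfs x\in V(G_1\klk G_s)\}\subseteq(H_1\times\cdots\times H_s)\times\Pp^{r-1}$ together with its closed subset $\Phi'$ where the fiber of the first projection has dimension $\ge r-s$. Then $\Sigma$ is the image of $\Phi'$ under the first projection, whose fibers all have dimension $\ge r-s$, whence $\dim\Sigma\le\dim\Phi'-(r-s)$. Because $\mathrm{PGL}_r(\overline{K})$ acts transitively on $\Pp^{r-1}$ and the construction is equivariant, $\Phi'$ maps onto $\Pp^{r-1}$ with equidimensional fibers; writing $\phi$ for the dimension of the fiber over $\bfs x_0:=[0:\cdots:0:1]$, one gets $\dim\Phi'=\phi+(r-1)$ and hence $\dim\Sigma\le\phi+s-1$. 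It therefore suffices to show that the locus of $(G_i)$ with $\dim_{\bfs x_0}V(G_1\klk G_s)\ge r-s$ has codimension at least $r+1$ in $H_1\times\cdots\times H_s$.

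Finally I would establish this local codimension estimate, which is the crux. Dehomogenizing at $\bfs x_0$, the condition $\bfs x_0\in V$ is the vanishing of the $X_r^{d_i}$-coefficient of each $G_i$, i.e. $s$ independent linear conditions. Granting them, let $\ell_i$ be the linear part of $G_i$ at $\bfs x_0$; then $\dim_{\bfs x_0}V\le\dim T_{\bfs x_0}V=(r-1)-\operatorname{rank}(\ell_1\klk\ell_s)$, so $\dim_{\bfs x_0}V\ge r-s$ forces $\operatorname{rank}(\ell_1\klk\ell_s)\le s-1$. Since the $\ell_i$ range freely over an $(r-1)$-dimensional space, this rank condition has codimension $r-s$, giving codimension $\ge r$ so far; moreover the strata with rank $\le s-2$ already have codimension $\ge r+1$, so it remains to gain one further condition on the open stratum where the rank is exactly $s-1$. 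There, after an invertible change among the $G_i$ and of the local coordinates, $\ell_1\klk\ell_{s-1}$ are independent while $G_s$ has no linear part, so $M:=V(G_1\klk G_{s-1})$ is a smooth germ of dimension $r-s$ at $\bfs x_0$; now $\dim_{\bfs x_0}V\ge r-s$ holds precisely when $G_s$ restricts to zero on $M$, and forcing even the degree-$2$ part of $G_s|_M$ to vanish is a nontrivial linear condition on the coefficients of $G_s$ (this is where the hypothesis $d_i\ge 2$ is used, ensuring such coefficients exist). The main obstacle is exactly this last point, namely that requiring the \emph{true} local dimension, rather than merely the tangent-space dimension, to be $\ge r-s$ costs at least one condition beyond the rank-deficiency of the linear parts; once it is secured the codimension is $\ge r+1$, and the lemma follows.
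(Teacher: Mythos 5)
Your argument is correct in substance, but it takes a genuinely different route from the paper's. You agree with the paper on the first reduction: emptiness of $V(\bfs F_s)$ forces the leading forms to cut out a subvariety of $\Pp^{r-1}$ of dimension at least $r-s$ (the paper derives this by combining Lemma \ref{lemma: reg seq and complete int} with \cite[Lemma 5.4]{MaPePr20}, whereas you argue directly through the projective closure and the hyperplane at infinity; your version is more self-contained). The divergence is in bounding the codimension of the locus $\Sigma$ of deficient leading-form tuples. The paper stratifies by the maximal $j$ with $\dim V(F_1^{\mathrm{in}},\ldots,F_j^{\mathrm{in}})=r-1-j$, observes that $\{F_{j+1}^{\mathrm{in}}=0\}$ must then contain an irreducible component of a pure $(r-1-j)$-dimensional variety, and invokes \cite[Lemme 3.3]{Benoist11}, which bounds the codimension of the space of degree-$d_{j+1}$ forms containing a fixed variety of dimension $\ge r-s$ by $\binom{d_{j+1}+r-s}{d_{j+1}}\ge r-s+2$; a fibering over $(F_1,\ldots,F_j)$ then finishes. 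You instead introduce an incidence correspondence over $\Pp^{r-1}$, use transitivity of the $\mathrm{PGL}_r$-action to reduce everything to one point $\bfs x_0$, and prove the pointwise codimension bound $\ge r+1$ by hand: $s$ conditions for $\bfs x_0\in V$, $r-s$ conditions from rank-deficiency of the linear parts (via $\dim_{\bfs x_0}V\le\dim T_{\bfs x_0}V$), plus one further condition from the $2$-jet of $G_s$ restricted to the smooth germ $M$, which is exactly where $d_i\ge 2$ enters. Your route avoids both external citations at the cost of a more delicate local analysis (the whole point being that rank-deficiency of the differentials is necessary but not sufficient for $\dim_{\bfs x_0}V\ge r-s$, and you correctly extract the missing condition from the quadratic part); the paper's route is shorter given Benoist's lemma and in fact yields a stronger per-stratum bound, namely $\binom{r-s+2}{2}$, of which only $r-s+2$ is used.

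Two points in your write-up should be tightened, though neither is a genuine gap. First, ``an invertible change among the $G_i$'' is not literally available, since the $G_i$ have different degrees; what you mean, and what suffices, is to replace the dehomogenized germ $g_s$ by $g_s-\sum_{i<s}c_ig_i$, which changes neither the ideal nor the germ of $V$ at $\bfs x_0$, while the codimension count stays on the original coefficients of $G_s$ (the $c_i$ depend linearly on $\ell_s$, so the $2$-jet condition remains a linear condition on $\mathrm{coeffs}(G_s)$ for fixed $G_1,\ldots,G_{s-1}$, and it is nontrivial: take $G_s=X_r^{d_s-2}\lambda^2$ with $\lambda$ a linear form not vanishing on $T_{\bfs x_0}M$). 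Second, your $\Phi'$ must be defined by the \emph{local} fiber dimension $\dim_{\bfs x}\pi_1^{-1}(\pi_1(\cdot))\ge r-s$, which is closed by Chevalley semicontinuity; with the global-fiber-dimension reading, the fiber of $\Phi'\to\Pp^{r-1}$ over $\bfs x_0$ would not be contained in the locus $\{\dim_{\bfs x_0}V\ge r-s\}$ that your final computation bounds. With the local definition, both inequalities $\dim\Sigma\le\dim\Phi'-(r-s)$ and $\dim\Phi'\le\phi+(r-1)$ go through, and the lemma follows as you claim.
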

\begin{proof}
Let $\bfs F_s:=(F_1,\ldots,F_s)\in \mathcal{F}_{\bfs
d_s}(\overline{K})$ be such that $\deg F_i=d_i$ for $1\le i\le s$.
Denote by $F_i^{\mathrm{in}}$ the homogeneous component of $F_i$ of
degree $d_i$ for $1\le i\le s$ and $\bfs
F_s^{\mathrm{in}}:=(F_1^{\mathrm{in}},\ldots,F_s^{\mathrm{in}})$. By
hypothesis, $F_i^{\mathrm{in}}$ is nonzero for $1\le i\le s$. If
$V(\bfs F_s^{\mathrm{in}})\subset\Pp^{r-1}$ satisfies the condition
$\dim V(\bfs F_s^{\mathrm{in}})=r-1-s$, then, according to Lemma
\ref{lemma: reg seq and complete int}, the polynomials $\bfs
F_s^{\mathrm{in}}$ form a regular sequence. By, e.g., \cite[Lemma
5.4]{MaPePr20}, the polynomials $\bfs F_s$ also form a regular
sequence, and thus $\dim V(\bfs F_s)=r-s$. As a consequence, $B_0$
is contained in the set of $\bfs F_s:=(F_1,\ldots,F_s)$ such that
$\deg F_i=d_i$ for $1\le i\le s$ and $\dim V(\bfs
F_s^{\mathrm{in}})>r-1-s$.

Let $\bfs F_s\in\mathcal{F}_{\bfs d_s}(\overline{K})$ be such that
$\dim V(\bfs F_s^{\mathrm{in}})>r-1-s$, and let $j\le s-1$ be
maximum such that $\dim
V(F_1^{\mathrm{in}},\ldots,F_j^{\mathrm{in}})=r-1-j$. Observe that
$V(F_1^{\mathrm{in}},\ldots,F_j^{\mathrm{in}})$ is of pure dimension
$r-1-j$. As $\dim
V(F_1^{\mathrm{in}},\ldots,F_j^{\mathrm{in}})=V(F_1^{\mathrm{in}},
\ldots,F_{j+1}^{\mathrm{in}})$, the hypersurface defined by
$F_{j+1}^{\mathrm{in}}$ must contain an irreducible component of
$V(F_1^{\mathrm{in}},\ldots,F_j^{\mathrm{in}})$. By \cite[Lemme
3.3]{Benoist11}, the set of homogeneous polynomials of degree
$d_{j+1}$ of $\overline{K}[X_1,\ldots,X_r]$ containing an
irreducible component of
$V(F_1^{\mathrm{in}},\ldots,F_j^{\mathrm{in}})$ has codimension at
least
$$\binom{d_{j+1}+r-1-j}{d_{j+1}}\ge
\binom{d_{j+1}+r-s}{d_{j+1}}\ge \binom{r-s+2}{2}\ge r-s+2.$$

Now for $1\le j\le s-1$, denote by $B_j^{\mathrm{in}}$ the set of
$\bfs F_s\in\mathcal{F}_{\bfs d_s}(\overline{K})$ such that $\deg
F_i=d_i$ for $1\le i\le s$, $\dim V(F_1^{\mathrm{in}},\ldots,
F_i^{\mathrm{in}})=r-1-i$ for $1\le i\le j$ and $\dim
V(F_1^{\mathrm{in}},\ldots,F_{j+1}^{\mathrm{in}})=r-1-j$. Our
previous arguments show that
$B_0\subset\bigcup_{j=1}^{s-1}B_j^{\mathrm{in}}$. Denote by
$\pi_j:B_j^{\mathrm{in}}\to\mathcal{F}_{d_1}\times\cdots\times\mathcal{F}_{d_j}$
the projection defined by $\pi_j(F_1,\ldots,F_s):=(F_1,\ldots,F_j)$.
Then each nonempty fiber of $\pi_j$ has dimension at most
$\sum_{i=j+1}^s\dim \mathcal{F}_{d_i}(\overline{K})-(r-s+2)$. By the
theorem on the dimension of fibers we conclude that
$$\dim B_j^{\mathrm{in}}-\sum_{i=1}^j\dim
\mathcal{F}_{d_i}(\overline{K})\!\le \dim B_j^{\mathrm{in}}-\dim
\pi(B_j^{\mathrm{in}})\!\le\!\!\! \sum_{i=j+1}^s\!\dim
\mathcal{F}_{d_i}(\overline{K})-(r-s+2).$$
The conclusion of the lemma readily follows.
\end{proof}
\begin{remark}\label{rem: dimension B_0 for d_i=1}
It is easy to see that, if $d_i=1$ for some of the $d_i$, then the
proof of Lemma \ref{lemma: dimension B_0} shows that $\dim B_0\le
\dim \mathcal{F}_{\bfs d_s}(\overline{K})-r+s-1$.
\end{remark}

Now we are able to establish our result on the dimension of $B_1$.
\begin{theorem}\label{th: dimension F_s not red reg seq}
If $d_i\ge 2$ for $1\le i\le s$, then $\dim B_1\le \dim
\mathcal{F}_{\bfs d_s}(\overline{K})-r+s-2$.
\end{theorem}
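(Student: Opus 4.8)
The plan is to combine the two preceding lemmas with the dimension estimate \eqref{eq: upper bound W_(r-s)} for $\pi(W_{r-s})$, using the homogenization/dehomogenization dictionary to pass from the projective information about $V(\bfs F_s^h)$ to the affine system $\bfs F_s$. The key intermediate claim is the inclusion
$$B_1\subseteq \pi(W_{r-s})\cup\{\bfs F_s\in\mathcal{F}_{\bfs d_s}(\overline{K}):V(\bfs F_s)=\emptyset\},$$
after which I bound the dimension of each piece separately.

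To prove the inclusion I would fix $\bfs F_s\notin\pi(W_{r-s})$ with $V(\bfs F_s)\neq\emptyset$ and show that $\bfs F_s$ defines an ideal-theoretic complete intersection, so that $\bfs F_s\notin B_1$. By Lemma \ref{lemma: homogenized system is red reg seq}, the variety $V(\bfs F_s^h)\subset\Pp^r$ is of pure dimension $r-s$ and the ideal $(F_1^h,\ldots,F_s^h)$ is radical. Identifying $\A^r$ with the affine chart $\{X_0\neq 0\}$ of $\Pp^r$, one has $V(\bfs F_s)=V(\bfs F_s^h)\cap\A^r$, and the dehomogenization homomorphism $X_0\mapsto 1$, being surjective and sending each $F_i^h$ to $F_i$, carries the ideal $(F_1^h,\ldots,F_s^h)$ onto $(F_1,\ldots,F_s)$. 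Since dehomogenization of a radical ideal is radical, $(F_1,\ldots,F_s)$ is radical; and since the trace of an equidimensional projective variety of dimension $r-s$ on the chart $\{X_0\neq 0\}$ is either empty or again of pure dimension $r-s$, the hypothesis $V(\bfs F_s)\neq\emptyset$ forces $\dim V(\bfs F_s)=r-s$. Thus $(F_1,\ldots,F_s)$ is a radical ideal of height $s$ generated by $s$ elements in the Cohen--Macaulay ring $\overline{K}[X_1,\ldots,X_r]$, whence $F_1,\ldots,F_s$ form a regular sequence; being moreover radical, $\bfs F_s$ defines an ideal-theoretic complete intersection, as claimed.

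It then remains to control the empty-variety locus $E:=\{\bfs F_s\in\mathcal{F}_{\bfs d_s}(\overline{K}):V(\bfs F_s)=\emptyset\}$. Writing $D_i:=\{\bfs F_s:\deg F_i<d_i\}$ and $D:=\bigcup_{i=1}^s D_i$, I would split $E\subseteq B_0\cup D$: if all $\deg F_i=d_i$ then $\bfs F_s\in B_0$, and otherwise $\bfs F_s\in D$. Lemma \ref{lemma: dimension B_0} bounds $\dim B_0$ by $\dim\mathcal{F}_{\bfs d_s}(\overline{K})-r+s-2$. For $D$, note that $D_i$ is the linear subspace obtained by annihilating the $\binom{d_i+r-1}{r-1}$ coefficients of the degree-$d_i$ part of $F_i$, so $\dim D_i=\dim\mathcal{F}_{\bfs d_s}(\overline{K})-\binom{d_i+r-1}{r-1}$; since $d_i\ge 2$ and $1<s<r$ give $\binom{d_i+r-1}{r-1}\ge\binom{r+1}{r-1}=\binom{r+1}{2}\ge r\ge r-s+2$, each $D_i$, hence $D$, satisfies the same bound. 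Recalling from \eqref{eq: upper bound W_(r-s)} that $\dim\pi(W_{r-s})\le\dim\mathcal{F}_{\bfs d_s}(\overline{K})-r+s-2$, the inclusion $B_1\subseteq\pi(W_{r-s})\cup B_0\cup D$ together with the monotonicity of dimension under inclusion yields the desired estimate.

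The main obstacle is the second step: transferring the ``reduced regular sequence'' property of the homogenized system to the affine system. The delicate point is that passing to the chart $\{X_0\neq 0\}$ can empty the affine variety when every irreducible component of $V(\bfs F_s^h)$ lies at infinity, and it is precisely these exceptional systems---together with the degree-deficient ones, where $X_0$ divides some $F_i^h$---that must be absorbed into the auxiliary sets $B_0$ and $D$. The two algebraic facts on which the argument hinges are that dehomogenization preserves radicality and that radicality together with pure dimension $r-s$ forces $F_1,\ldots,F_s$ to be a regular sequence.
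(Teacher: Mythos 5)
Your overall architecture coincides with the paper's: the covering $B_1\subseteq \pi(W_{r-s})\cup B_0\cup\bigcup_i D_i$ (your $D_i$ are the paper's $L_i$), the dimension bounds for each of the three pieces, and the passage to the affine setting via dehomogenization (the paper proves radicality by the explicit computation with $X_0F^h$; you invoke the equivalent standard fact that dehomogenizing a radical homogeneous ideal gives a radical ideal, which is correct, though your parenthetical justification ``being surjective'' is not the reason -- images of radical ideals under surjective homomorphisms need not be radical; what saves the day is that dehomogenization is a localization followed by taking degree-zero parts). The genuine gap is the last step of your second paragraph, where you pass from ``$(F_1,\ldots,F_s)$ is a radical ideal of height $s$ generated by $s$ elements in the Cohen--Macaulay ring $\overline{K}[X_1,\ldots,X_r]$'' to ``$F_1,\ldots,F_s$ form a regular sequence.'' That implication is a \emph{local} (or graded) fact; in the global polynomial ring it is false, because regular sequences there are order-sensitive and your hypothesis constrains only the top ideal, not the intermediate ones. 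Concretely, in $\overline{K}[x,y,z,w]$ take $F_1=y(1-x)$, $F_2=z(1-x)$, $F_3=x$: then $(F_1,F_2,F_3)=(x,y,z)$ is prime of height $3$, generated by these three elements, and $V(F_1,F_2,F_3)$ is the $w$-axis, pure of dimension $1=r-s$; yet $F_2$ is a zero divisor modulo $(F_1)$ (indeed $yF_2=zF_1$ with $y\notin(F_1)$), so $F_1,F_2,F_3$ is not a regular sequence in the sense of the paper's definition. Radicality and purity of the full ideal do not repair this, as the example shows; nor does the hypothesis $d_i\ge 2$, since the algebraic statement you invoke makes no reference to degrees.

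What is actually needed -- and what the paper's proof supplies -- is control of \emph{every} intermediate variety $V(F_1,\ldots,F_j)$, not just the last one. The paper applies Lemma \ref{lemma: reg seq and complete int} to conclude that $F_1^h,\ldots,F_s^h$ is a regular sequence (in the graded setting the principle ``height $=$ number of generators $\Rightarrow$ regular sequence'' is valid), deduces that $V(F_1^h,\ldots,F_j^h)$ is of pure dimension $r-j$ for every $j$, and then, using $V(\bfs F_s)\neq\emptyset$ together with the fact that a nonempty affine variety cut out by $j$ polynomials is of pure dimension $r-j$ once its dimension is at most $r-j$, obtains that each $V(F_1,\ldots,F_j)$ is of pure dimension $r-j$; this stepwise purity is exactly what makes each $F_{j+1}$ a nonzerodivisor modulo $(F_1,\ldots,F_j)$. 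Note that your standing hypothesis $\bfs F_s\notin\pi(W_{r-s})$ does exclude the counterexample above (there $V(\bfs F_s^h)$ acquires the excess component $\{X_0=x=0\}$ at infinity, so that system lies in $\pi(W_{r-s})$), which is why your intermediate inclusion $B_1\subseteq\pi(W_{r-s})\cup E$ is in fact true; but your written argument never uses this information, deriving the regular sequence from the affine data alone, and that derivation is invalid. Replacing your last step by the paper's intermediate-dimension argument (or by any correct proof that $\mathrm{ht}(F_1,\ldots,F_j)=j$ for all $j$) closes the gap, after which the rest of your proof goes through as written.
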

\begin{proof}
Let $\bfs F_s:=(F_1,\ldots,F_s)\notin\pi(W_{r-s})\cup B_0$ be such
that $\deg F_i=d_i$ for $1\le i\le s$. We claim that $\bfs F_s$
define an ideal-theoretic complete intersection.

Indeed, according to Lemma \ref{lemma: homogenized system is red reg
seq}, $V(\bfs F_s^h)$ is of pure dimension $r-s$. As it is defined
by $s$ polynomials, by Lemma \ref{lemma: reg seq and complete int}
we conclude that $\bfs F_s^h$ forms a regular sequence, which in
turn implies that $V(F_1^h,\ldots,F_j^h)$ is of pure dimension $r-j$
for $1\le j\le s$. As a consequence, $V(F_1,\ldots,F_j)$ must be of
dimension at most $r-j$ for $1\le j\le s$, since otherwise we would
have $\dim V(F_1^h,\ldots,F_j^h)>r-j$. Further, as it is defined by
$j$ polynomials, $V(F_1,\ldots,F_j)$ must be of pure dimension $r-j$
or empty for $1\le j\le s$, and the condition $\bfs F_s\notin B_0$
implies that $V(F_1,\ldots,F_j)$ is of pure dimension $r-j$ for
$1\le j\le s$. This proves that $\bfs F_s$ forms a regular sequence.

Next, let $F$ be any element in the radical ideal of $I(\bfs F_s)$.
A straightforward computation shows that $x_0F^h$ belongs to the
radical of $I(\bfs F_s^h)$. As $I(\bfs F_s^h)$ is radical by Lemma
\ref{lemma: homogenized system is red reg seq}, we conclude that
$x_0F^h \in I(\bfs F_s^h)$. Specializing $x_0 \mapsto 1$ we deduce
that $F \in I(\bfs F_s)$, which proves that $I(\bfs F_s)$ is a
radical ideal.

%

As a consequence, any $\bfs F_s\in B_1$ is either contained in $\pi
(W_{r-s})\cup B_0$, which is a subvariety of $\mathcal{F}_{\bfs
d_s}(\overline{K})$ of codimension at least $r-s+2$, or it is
defined by polynomials $F_1,\ldots,F_s$ with $\deg F_i<d_i$ for some
$i$.
We conclude that $B_1$ is contained in a finite union of
subvarieties of $\mathcal{F}_{\bfs d_s}(\overline{K})$ of
codimension at least $r-s+2$, showing thus the theorem.
\end{proof}
\begin{remark}\label{rem: expression B_1}
According to the last paragraph of the proof of Theorem \ref{th:
dimension F_s not red reg seq}, the set $B_1$ in contained in the
union
$$
B_1\subset  \bigcup_{i=1}^s L_i  \cup \pi(W_{r-s})\cup B_0,
$$
where $L_i:=\mathcal{F}_{d_1}(\overline{K})\times\cdots
\times\mathcal{F}_{d_{i-1}}(\overline{K})\times
\mathcal{F}_{d_i-1}(\overline{K})\times\mathcal{F}_{d_{i+1}}(\overline{K})
\times\cdots\times \mathcal{F}_{d_s}(\overline{K})$ is the set of
$\bfs F_s:=(F_1,\ldots,F_s)\in\mathcal{F}_{\bfs d_s}(\overline{K})$
with $\deg F_i<d_i$ for $1\le i\le s$, and $B_0$ is defined in Lemma
\ref{lemma: dimension B_0}.
\end{remark}
%
%
\subsection{Systems not defining an irreducible variety}
Next we consider the set of systems not defining an irreducible
variety. More precisely, denote
\begin{equation}
A_2:=\{\bfs F_s\in A_1:\,V(\bfs F_s)\text{ is irreducible}\}, \quad
B_2:=\mathcal{F}_{\bfs d_s}(\overline{K})\setminus A_2. \label{def:
B}\end{equation}
Our aim is to establish an upper bound on the dimension of $B_2$.

Let $W_{r-s-1}$ be the set of $(\bfs F_s,\bfs x)\in W$ such that
$\pi^{-1}(\bfs F_s)$ has dimension at least $r-s-1$. \cite[\S I.6.3,
Corollary]{Shafarevich94} implies that $\pi(W_{r-s-1})$ is closed,
and by the theorem on the dimension of fibers (see, e.g., \cite[\S
I.6.3, Theorem 7]{Shafarevich94}) we conclude that
\begin{equation}\label{eq: upper bound W_(r-s+1)}
\dim \mathcal{F}_{\bfs d_s}(\overline{K})-2\ge \dim W_{r-s-1}\ge\dim
\pi (W_{r-s-1})+(r-s-1).
\end{equation}

We have the following result.
\begin{lemma}
For any $\bfs F_s\in \mathcal{F}_{\bfs d_s}(\overline{K})\setminus
\pi (W_{r-s-1})$, we have the following assertions:
\begin{itemize}
  \item $V(\bfs F_s^h)$ is of pure dimension $r-s$.
  \item $I(\bfs F_s^h)$ is radical ideal.
  \item $V(\bfs F_s^h)$ is irreducible.
\end{itemize}
\end{lemma}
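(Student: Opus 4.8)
The plan is to obtain the first two items for free from the previous lemma and to extract irreducibility from a normality argument based on Serre's criterion. First I observe that $W_{r-s}\subseteq W_{r-s-1}$: a fiber of dimension at least $r-s$ has in particular dimension at least $r-s-1$. Consequently $\pi(W_{r-s})\subseteq\pi(W_{r-s-1})$, so every $\bfs F_s\in\mathcal{F}_{\bfs d_s}(\overline{K})\setminus\pi(W_{r-s-1})$ lies in $\mathcal{F}_{\bfs d_s}(\overline{K})\setminus\pi(W_{r-s})$. Lemma \ref{lemma: homogenized system is red reg seq} then yields at once that $V(\bfs F_s^h)$ is of pure dimension $r-s$ and that $I(\bfs F_s^h)$ is radical. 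This disposes of the first two assertions, so the real content of the statement is the third one.

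Write $V:=V(\bfs F_s^h)\subseteq\Pp^r$. By the two items just proved and Lemma \ref{lemma: reg seq and complete int}, the polynomials $\bfs F_s^h$ form a regular sequence and, being $I(\bfs F_s^h)$ radical, they generate $I(V)$; thus $V$ is an ideal-theoretic complete intersection of pure dimension $r-s\ge 1$. In particular $V$ is Cohen--Macaulay, so it satisfies Serre's condition $S_2$. Next I analyze the fiber $\pi^{-1}(\bfs F_s)$: by the definition \eqref{eq: incidence var W} of $W$, it consists of the points $\bfs x\in V$ at which all maximal minors $\Delta_i$ vanish, i.e. at which the Jacobian matrix of $\bfs F_s^h$ has rank strictly less than $s$. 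Since $V$ is a complete intersection of codimension $s$ generated by $\bfs F_s^h$, the Jacobian criterion identifies this set with the singular locus $\mathrm{Sing}(V)$. The hypothesis $\bfs F_s\notin\pi(W_{r-s-1})$ means precisely that $\dim\pi^{-1}(\bfs F_s)<r-s-1$, whence $\dim\mathrm{Sing}(V)\le r-s-2$; that is, $\mathrm{Sing}(V)$ has codimension at least $2$ in $V$, so $V$ is regular in codimension $\le 1$ and therefore satisfies $R_1$.

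Having $R_1$ and $S_2$, Serre's criterion for normality (see, e.g., \cite{Eisenbud95}) shows that $V$ is normal. On the other hand, a projective complete intersection of positive dimension is connected. Since a normal variety has pairwise disjoint irreducible components — at a point lying on two components the local ring has more than one minimal prime, hence is not a domain and a fortiori not integrally closed — a connected normal variety is irreducible. Therefore $V=V(\bfs F_s^h)$ is irreducible, which is the third assertion.

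The routine parts are the monotonicity $W_{r-s}\subseteq W_{r-s-1}$ and the Cohen--Macaulayness of the complete intersection. The step demanding the most care is the identification of the fiber $\pi^{-1}(\bfs F_s)$ with $\mathrm{Sing}(V)$ via the projective Jacobian criterion: one must keep track of the homogenizing variable $X_0$ and the Euler relation so that the rank condition on the $s\times(r+1)$ Jacobian genuinely detects the singular points of the projective variety. Combined with the connectedness of positive-dimensional projective complete intersections — the ingredient that upgrades normality to irreducibility — this is where the argument is most delicate.
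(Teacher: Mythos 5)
Your argument is correct and is essentially the paper's own proof: the first two assertions follow from the containment $\pi(W_{r-s})\subseteq\pi(W_{r-s-1})$ together with Lemma \ref{lemma: homogenized system is red reg seq}, and irreducibility follows because the singular locus of $V(\bfs F_s^h)$ has dimension at most $r-s-2$, so the complete intersection $V(\bfs F_s^h)$ is normal and hence irreducible. The only difference is presentational: you unwind the Serre-criterion ($R_1$ plus $S_2$) and connectedness ingredients that the paper compresses into the citation of \cite[Fact 2.1]{GaMa19}.
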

\begin{proof}
Since $\mathcal{F}_{\bfs d_s}(\overline{K})\setminus \pi
(W_{r-s-1})\subset \mathcal{F}_{\bfs d_s}(\overline{K})\setminus \pi
(W_{r-s})$, the first and the second assertions are consequences of
Lemma \ref{lemma: homogenized system is red reg seq}. Further, the
set of singular points of $V(\bfs F_s^h)$ has dimension at most
$r-s-2$, which implies that $V(\bfs F_s^h)$  is a normal variety,
and thus irreducible (see, e.g., \cite[Fact 2.1]{GaMa19}), which
completes the proof.
\end{proof}

Now we prove our result on the dimension of $B_2$.
\begin{theorem}\label{th: dimension F_s not abs irred}
$\dim B_2\le \dim \mathcal{F}_{\bfs d_s}(\overline{K})-r+s-1$.
\end{theorem}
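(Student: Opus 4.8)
My plan is to mimic the proof of Theorem \ref{th: dimension F_s not red reg seq}, the only change being that I now excise the \emph{larger} set $\pi(W_{r-s-1})$ in place of $\pi(W_{r-s})$; this is precisely what buys the extra irreducibility of the homogenized variety supplied by the preceding lemma, at the cost of one unit of codimension. Concretely, I would first establish the inclusion
\[
B_2\subseteq \pi(W_{r-s-1})\cup B_0\cup\bigcup_{i=1}^{s} L_i ,
\]
where $B_0$ is the set of Lemma \ref{lemma: dimension B_0} and $L_i$ is the set of $\bfs F_s$ with $\deg F_i<d_i$, as in Remark \ref{rem: expression B_1}. By the definition of $B_2$, proving this inclusion amounts to showing that every $\bfs F_s$ lying outside the right-hand side belongs to $A_2$.

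To prove the inclusion I would fix $\bfs F_s=(F_1,\ldots,F_s)$ outside $\pi(W_{r-s-1})\cup B_0\cup\bigcup_i L_i$. Being outside each $L_i$ forces $\deg F_i=d_i$ for $1\le i\le s$, and being outside $B_0$ then guarantees $V(\bfs F_s)\subset\A^r$ is nonempty. Since $W_{r-s}\subseteq W_{r-s-1}$ we have $\pi(W_{r-s})\subseteq\pi(W_{r-s-1})$, so $\bfs F_s\notin\pi(W_{r-s})\cup B_0$ with $\deg F_i=d_i$; hence the argument in the proof of Theorem \ref{th: dimension F_s not red reg seq} applies and shows $\bfs F_s$ defines an ideal-theoretic complete intersection, i.e. $\bfs F_s\in A_1$. (That argument uses only $d_i\ge 1$, so no hypothesis $d_i\ge 2$ is needed here.) It remains to see that $V(\bfs F_s)$ is irreducible. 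By the preceding lemma $V(\bfs F_s^h)\subset\Pp^r$ is irreducible of pure dimension $r-s$, and because $\deg F_i=d_i$ the dehomogenization identifies $V(\bfs F_s)$ with the open set $V(\bfs F_s^h)\cap\{X_0\neq 0\}$. This open set is nonempty (as $V(\bfs F_s)\neq\emptyset$), hence dense and irreducible in the irreducible variety $V(\bfs F_s^h)$; therefore $V(\bfs F_s)$ is irreducible and $\bfs F_s\in A_2$, completing the inclusion.

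Granting the inclusion, I would bound each piece. Equation \eqref{eq: upper bound W_(r-s+1)} gives $\dim\pi(W_{r-s-1})\le\dim\mathcal{F}_{\bfs d_s}(\overline{K})-r+s-1$. For $B_0$, Lemma \ref{lemma: dimension B_0} together with Remark \ref{rem: dimension B_0 for d_i=1} yields $\dim B_0\le\dim\mathcal{F}_{\bfs d_s}(\overline{K})-r+s-1$, whether or not some $d_i$ equals $1$. Finally each $L_i$ has codimension $\binom{d_i+r-1}{r-1}\ge r\ge r-s+1$, so $\dim L_i\le\dim\mathcal{F}_{\bfs d_s}(\overline{K})-r+s-1$ as well. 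As $B_2$ is thus contained in a finite union of subvarieties each of codimension at least $r-s+1$, I would conclude $\dim B_2\le\dim\mathcal{F}_{\bfs d_s}(\overline{K})-r+s-1$.

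The main obstacle, and the only genuinely new point beyond Theorem \ref{th: dimension F_s not red reg seq}, is the transfer of irreducibility from the projective variety $V(\bfs F_s^h)$ to the affine variety $V(\bfs F_s)$. The delicate ingredient is the affine–projective dictionary: the identification $V(\bfs F_s)\cong V(\bfs F_s^h)\cap\{X_0\neq 0\}$ is valid only once $\deg F_i=d_i$ has been secured (otherwise homogenization drops the degree), and its nonemptiness must be arranged by excluding $B_0$. Once both conditions are in force, irreducibility descends to a nonempty open dense subset routinely, and the remaining work is the bookkeeping of the three codimension estimates above.
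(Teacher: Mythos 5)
Your proof is correct and takes essentially the same route as the paper: the same decomposition $B_2\subseteq\bigcup_{i=1}^s L_i\cup\pi(W_{r-s-1})\cup B_0$, the same reuse of the complete-intersection argument from the proof of Theorem \ref{th: dimension F_s not red reg seq} (valid since $\pi(W_{r-s})\subseteq\pi(W_{r-s-1})$), and the same codimension bounds coming from \eqref{eq: upper bound W_(r-s+1)} and Remark \ref{rem: dimension B_0 for d_i=1}. The only cosmetic difference is in transferring irreducibility from $V(\bfs F_s^h)$ to $V(\bfs F_s)$: you identify $V(\bfs F_s)$ with the nonempty (hence dense) open subset $V(\bfs F_s^h)\cap\{X_0\neq 0\}$ of the irreducible variety $V(\bfs F_s^h)$, whereas the paper argues that the projective closure of $V(\bfs F_s)$, being of pure dimension $r-s$, must equal $V(\bfs F_s^h)$ — the same observation phrased two ways.
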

\begin{proof}
Let $\bfs F_s=(F_1,\ldots,F_s)\notin\pi(W_{r-s-1})\cup B_0$ be such
that $\deg F_i=d_i$ for $1\le i\le s$. We claim that
\begin{itemize}
  \item $\bfs F_s$ defines an ideal-theoretic complete intersection.
  \item $V(\bfs F_s)$ is irreducible.
\end{itemize}
Observe that, if $\bfs F_s\notin\pi(W_{r-s-1})\cup B_0$, then $\bfs
F_s\notin\pi(W_{r-s})\cup B_0$. Therefore, the first assertion is a
consequence of Theorem \ref{th: dimension F_s not red reg seq}. In
particular, $ V(\bfs F_s)$ is of pure dimension $r-s$, and thus the
projective closure of $ V(\bfs F_s)$ is of pure dimension $r-s$. On
the other hand, such a projective closure is contained in the
irreducible variety $V(\bfs F_s^h)$, of dimension $r-s$. We conclude
that $V(\bfs F_s^h)$ is the projective closure of $V(\bfs F_s)$,
which implies that $V(\bfs F_s)$ is irreducible.

We observe that the proof of Lemma \ref{lemma: dimension B_0} also
shows that, for $d_i\ge 1$ for $1\le i\le s$,  $B_0$ is a subvariety
of $\mathcal{F}_{\bfs d_s}(\overline{K})$ of codimension at least
$r-s+1$ (see Remark \ref{rem: dimension B_0 for d_i=1}). As a
consequence, any $\bfs F_s\in B_2$ is either contained in $\pi
(W_{r-s-1})\cup B_0$, which is a subvariety of $\mathcal{F}_{\bfs
d_s}(\overline{K})$ of codimension at least $r-s+1$, or is defined
by polynomials $F_1,\ldots,F_s$ with $\deg F_i<d_i$ for some $i$.
It follows that the set of $\bfs F_s$ such that $ V(\bfs F_s)$ is
not irreducible or an ideal-theoretic complete intersection is
contained in a subvariety of $\mathcal{F}_{\bfs d_s}(\overline{K})$
of codimension at least $r-s+1$.
\end{proof}

\begin{remark}\label{rem: expression B_2}
By the proof of Proposition \ref{th: dimension F_s not abs irred} we
have that $B_2$ is contained in the union
$$
B_2\subset  \bigcup_{i=1}^s L_i  \cup \pi(W_{r-s-1})\cup B_0,
$$
where $L_i:=\mathcal{F}_{d_1}(\overline{K})\times\cdots
\times\mathcal{F}_{d_{i-1}}(\overline{K})\times
\mathcal{F}_{d_i-1}(\overline{K})\times\mathcal{F}_{d_{i+1}}(\overline{K})
\times\cdots \times\mathcal{F}_{d_s}(\overline{K})$  for $1\le i\le
s$ and $B_0$ is defined in Lemma \ref{lemma: dimension B_0}
\end{remark}

One may wonder about the optimality of the bound on the codimension
of $B_2$ of Theorem \ref{th: dimension F_s not abs irred}. With
respect to this point, we have the following remark.
\begin{remark}
For $d_1=\cdots=d_s=1$, any $\bfs F_s\in
\mathcal{F}_1^s(\overline{K})$ is defined by linear polynomials
$F_i:=a_{i,0}+a_{i,1}X_1+\cdots+a_{i,r}X_r$ $(1\le i\le s)$.
Further, $\bfs F_s$ belongs to the set $B_2$ of \eqref{def: B} if
and only if the matrix $(a_{i,j})_{1\le i\le s,1\le j\le r}$ has
rank at most $s-1$. According to \cite[Proposition 1.1]{BrVe88}, the
set of matrices in $\overline{K}{}^{s\times r}$ of rank at most
$s-1$ forms a subvariety of $\overline{K}{}^{s\times r}$ of
dimension
$$(s-1)(r+1)=s\,r-r+s-1.$$
In other words, the set of such matrices is a variety of codimension
$r-s+1$. It follows that set $B_2$ of \eqref{def: B} for
$d_1=\cdots=d_s=1$ has codimension exactly $r-s+1$ in
$\mathcal{F}_1^s(\overline{K})$.
\end{remark}

%
%
\section{The number of defective systems over a finite field}
\label{section: number defective systems}
In this section we deal with $K=\fq$ and aim to estimate the number
of defective systems defined over $\fq$. More precisely, for $K=\fq$
and $B_1$ and $B_2$ defined as in \eqref{def: B_1} and \eqref{def:
B}, we aim to estimate the numbers $|B_1(\fq)|$ and $|B_2(\fq)|$ of
systems $\bfs F_s\in \mathcal{F}_{\bfs d_s}$ such that $V(\bfs F_s)$
either is not an ideal-theoretic complete intersection or is not
absolutely irreducible.
%
%
\subsection{Systems not defining an ideal-theoretic complete intersection}
\label{subsec: number systems not H}
Recall that Theorem \ref{th: dimension F_s not red reg seq} asserts
that $B_1$ has codimension at least $r-s+2$. Further, Remark
\ref{rem: expression B_2} shows that
$$
B_1\subset\bigcup_{i=1}^s L_i  \cup \pi(W_{r-s})\cup B_0,
$$
where $L_i:=\mathcal{F}_{d_1}(\cfq)\times\cdots\times
\mathcal{F}_{d_{i-1}}(\cfq)\times
\mathcal{F}_{d_i-1}(\cfq)\times\mathcal{F}_{d_{i+1}}(\cfq)
\times\cdots \times\mathcal{F}_{d_s}(\cfq)$ for $1\le i\le s$ and
$B_0$ is defined in Lemma \ref{lemma: dimension B_0}. Observe that
\begin{equation}\label{eq: bound points L_i}
|L_i(\fq)|=q^{\dim \mathcal{F}_{\bfs d_s}-\binom{d_i+r-1}{r-1}}\leq
q^{\dim \mathcal{F}_{\bfs d_s}-(r-s+2)}
\end{equation}
for $1\le i\le s$, where $\dim \mathcal{F}_{\bfs d_s}$ denotes the
dimension of $\mathcal{F}_{\bfs d_s}$ as $\fq$-vector space. Hence,
\begin{align}
|B_1(\fq)|&\le |\pi(W_{r-s})(\fq)|+\sum_{i=1}^s
|L_i(\fq)|+|B_0(\fq)|\notag\\ &\le |\pi(W_{r-s})(\fq)|+s\, q^{\dim
\mathcal{F}_{\bfs d_s}-(r-s+2)}+|B_0(\fq)|.\label{eq: expression B_1
bound degree}
\end{align}

We start with a bound for $|B_0(\fq)|$. In the proof of Lemma
\ref{lemma: dimension B_0} it is shown that
$$B_0\subset B_{r-s}^{\mathrm{in}}:=\{\bfs F_s\in\mathcal{F}_{\bfs d_s}(\cfq)
:\dim V(\bfs F_s^{\mathrm{in}})\ge r-s\}.$$
Therefore, it suffices to bound the number of $\fq$-rational points
of $B_{r-s}^{\mathrm{in}}$. For this purpose, we show that
$B_{r-s}^{\mathrm{in}}$ can be described by equations of low degree.

Let $\bfs\Phi:=(\phi_{i,j}:1\le i\le r,1\le j\le r-s)$ be an
$r\times (r-s)$ matrix of indeterminates over $\cfq$ and denote
$$
\mathcal{L}_j:=\sum_{i=1}^r\phi_{i,j}X_i\quad(1\le j\le r-s).
$$
We shall see that the condition $\dim V(\bfs F_s^{\mathrm{in}})\ge
r-s$ holds if and only if
\begin{equation}\label{eq: system for resultant B_0}
\{\bfs x\in\Pp^{r-1}(\overline{\fq(\bfs\Phi)}): \bfs
F_s^{\mathrm{in}}(\bfs
x)=\mathcal{L}_1=\cdots=\mathcal{L}_{r-s}=0\}\not=\emptyset.
\end{equation}
For this aim, we have the following result.
%
\begin{lemma}\label{lemma: condition Kollar}
Let $K$ be a field and $d\in \mathbb{N}$. Let $V:=V(F_1, \dots,
F_m)\subset \mathbb{P}^n(\overline{K})$ be a nonempty projective
$K$--variety. Let $\bfs \Xi:=(\Xi_{i,j}:1\le i\le d,0\le j\le n)$ be
a $d\times n$ matrix of indeterminates over $K$ and denote $
\Xi_j:=\mbox{$\sum_{i=0}^n$}\Xi_{i,j}X_i$ for $1\le j\le d$. Then
$\dim V \geq d$ if and only if
\begin{align*}
\{\bfs x \in \mathbb{P}^n(\overline{K(\bfs \Xi)}): F_1(\bfs x)=0,
\dots, F_m(\bfs x)=0, \Xi_1(\bfs x)=0, \dots, \Xi_d(\bfs x)=0\}\neq
\emptyset.
\end{align*}
\end{lemma}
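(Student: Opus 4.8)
The plan is to reduce everything to the case of a \emph{single} generic hyperplane and then induct on $d$. For a projective variety $W\subset\Pp^n$ defined over a field $L$, of dimension $e$, write $L':=L(\Xi_0,\ldots,\Xi_n)$ for the field of rational functions in $n+1$ fresh indeterminates and let $H:=\{\sum_{i=0}^n\Xi_iX_i=0\}$ be the corresponding generic hyperplane over $\overline{L'}$. I would first establish the following auxiliary claim: if $e\ge 1$ then $W\cap H$ (computed over $\overline{L'}$) is nonempty of dimension $e-1$, whereas if $e=0$ then $W\cap H=\emptyset$.

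Granting this claim, the lemma follows by applying it $d$ times, introducing one new block of indeterminates $\Xi_{0,j},\ldots,\Xi_{n,j}$ at each step; since the blocks are disjoint, the $j$-th hyperplane is generic over the field generated by the coefficients of the previous ones. For the forward implication, assume $\dim V\ge d$. Cutting successively by $\Xi_1=0,\ldots,\Xi_d=0$, the claim guarantees that as long as the current dimension is $\ge 1$ the cut is nonempty and lowers the dimension by at most one; hence after $d$ cuts we still have a nonempty variety (of dimension $\ge 0$) over $\overline{K(\bfs\Xi)}$, which is exactly the asserted intersection. For the converse I argue by contraposition: if $V$ is nonempty with $e:=\dim V<d$, the claim shows that each cut of a positive-dimensional variety lowers the dimension by \emph{exactly} one and that a cut of a zero-dimensional variety is empty; thus after $e+1\le d$ cuts the intersection is already empty, and a fortiori the intersection with all $d$ hyperplanes is empty.

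It remains to prove the auxiliary claim, which is the heart of the matter. Nonemptiness of $W\cap H$ for $e\ge 1$ follows from the projective dimension theorem (see, e.g., \cite{Shafarevich94}): in $\Pp^n$ over the algebraically closed field $\overline{L'}$ two varieties meet whenever the sum of their dimensions is at least $n$, and here $\dim W+\dim H=e+(n-1)\ge n$. The lower bound $\dim(W\cap H)\ge e-1$ on every nonempty component holds because $H$ is a hypersurface (Krull's principal ideal theorem). The upper bound $\dim(W\cap H)\le e-1$, together with the emptiness statement when $e=0$, both rest on the genericity of $H$: for any point $p=[a_0:\cdots:a_n]\in\Pp^n(\overline{L})$ the value $\sum_i\Xi_ia_i$ is a nonzero element of $\overline{L}[\Xi_0,\ldots,\Xi_n]$ (a nontrivial $\overline{L}$-linear combination of algebraically independent indeterminates), so $p\notin H$. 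Consequently $H$ contains no $\overline{L}$-point, hence no irreducible component of $W$ (each nonempty component of a variety over $\overline{L}$ contains an $\overline{L}$-point); this forces the dimension to drop and, when $W$ is a finite set of $\overline{L}$-points, gives $W\cap H=\emptyset$.

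The main obstacle I anticipate is the careful bookkeeping of base change and genericity through the induction. One must check that extending the ground field from $\overline{L_k}$ to $\overline{L_{k+1}}$ preserves the dimension of the intermediate varieties (dimension is invariant under extension of an already algebraically closed base field), that the coefficients of the $(k+1)$-st hyperplane remain algebraically independent over $\overline{L_k}$ (so the ``generic hyperplane avoids every fixed point'' argument applies verbatim at each stage), and that the irreducible components of each intermediate variety, though only defined over $\overline{L_k}$ rather than over $K$, still possess $\overline{L_k}$-points so that the same argument is applicable. Once these points are settled the two implications drop out of the induction.
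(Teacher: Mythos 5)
Your proof is correct, but it takes a genuinely different route from the paper's. The paper disposes of the lemma in a few lines: for the forward implication it base-changes $V$ to $\overline{K(\bfs\Xi)}$ and applies the projective dimension theorem \emph{once}, to $V^*$ and the single linear space $V(\Xi_1,\ldots,\Xi_d)$ of dimension $n-d$; for the converse it does not argue at all, but cites a classical lemma of Hodge and Pedoe (\cite[Chapter X, \S 5, Lemma II]{HoPe68b}), which is precisely the statement that a variety meeting $d$ generic independent hyperplanes has dimension at least $d$. You instead prove both implications from scratch by induction on $d$, cutting by one generic hyperplane at a time; the heart of your argument --- that a hyperplane whose coefficients are indeterminates over $\overline{L_k}$ contains no $\overline{L_k}$-rational point, hence no irreducible component of a variety defined over $\overline{L_k}$, so that each cut drops the dimension by exactly one and annihilates a zero-dimensional variety --- is in essence a self-contained proof of the Hodge--Pedoe lemma, with the forward direction using the same projective dimension theorem as the paper, only iteratively rather than in one stroke. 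The paper's approach buys brevity at the price of an external reference; yours buys self-containedness at the price of the base-change bookkeeping you flag at the end (invariance of dimension under extension of an algebraically closed base field, geometric irreducibility of components of a variety defined over an algebraically closed field, existence of rational points over such a field). These are all standard facts and your argument does go through, so the anticipated obstacles are routine rather than genuine gaps.
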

\begin{proof} Suppose that $\dim V \geq d$ and let
$V^*:=V_{\mathbb{P}^n(\overline{K(\bfs \Xi)})}(F_1, \dots, F_m)$.
Then $\dim V^*\geq d$. As $\Xi_1, \dots, \Xi_d$ are
linearly-independent linear forms, we have $\dim
V_{\mathbb{P}^n(\overline{K(\bfs \Xi)})}(\Xi_1, \dots, \Xi_d) =n-d$.
Thus $\dim V^* + \dim V_{\mathbb{P}^n(\overline{K(\bfs
\Xi)})}(\Xi_1, \dots, \Xi_d)\geq n$, which implies
$$
V^*\cap V_{\mathbb{P}^n(\overline{K(\bfs \Xi)})}(\Xi_1, \dots,
\Xi_d)\neq \emptyset.
$$
Conversely, if this intersection is nonempty, then by \cite[Chapter
X, \S.5, Lemma II]{HoPe68b} it follows that $\dim V^*\geq d$, which
implies that $\dim V \geq d$.
\end{proof}

Applying Lemma \ref{lemma: condition Kollar} to our situation we
deduce that the condition $\dim V(\bfs F_s^{\mathrm{in}})\ge r-s$
holds if and only if \eqref{eq: system for resultant B_0} holds. Now
we can proceed to prove that $B_{r-s}^{\mathrm{in}}$ can be
described by equations of low degree.
\begin{lemma}\label{lemma: equations for B_(r-s)^in}
Let $\delta:=d_1\cdots d_s$. If $d_i\ge 2$ for $1\le i\le s$, then
there exist multihomogeneous polynomials $\Phi_1,\ldots,\Phi_L\in
\fq[\mathrm{coeffs}(\bfs F_s)]$, of degree
$\frac{\delta}{d_i}$
in the variables $\mathrm{coeffs}(F_i)$ for $1\le i\le s$, such that
$V(\Phi_1,\ldots,\Phi_L)=B_{r-s}^{\mathrm{in}}$.
\end{lemma}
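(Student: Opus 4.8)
The plan is to exhibit the $\Phi_\ell$ as the coefficients, regarded as polynomials in the entries of $\bfs\Phi$, of a single multivariate resultant. Having applied Lemma \ref{lemma: condition Kollar}, we already know that $\bfs F_s\in B_{r-s}^{\mathrm{in}}$ if and only if the system \eqref{eq: system for resultant B_0} has a solution in $\Pp^{r-1}(\overline{\fq(\bfs\Phi)})$; equivalently, if and only if the $r$ forms $F_1^{\mathrm{in}},\ldots,F_s^{\mathrm{in}},\mathcal{L}_1,\ldots,\mathcal{L}_{r-s}$ in the $r$ variables $X_1,\ldots,X_r$ have a nontrivial common zero over $\overline{\fq(\bfs\Phi)}$. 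Since this is a system of $r$ homogeneous polynomials in $r$ variables, I would invoke the classical multivariate resultant: there is a polynomial $\mathrm{Res}$, with integer coefficients, in the coefficients of the $r$ forms, which vanishes precisely when those forms admit a nontrivial common zero over the algebraic closure of the ground field.

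Concretely, I would form $R:=\mathrm{Res}(F_1^{\mathrm{in}},\ldots,F_s^{\mathrm{in}},\mathcal{L}_1,\ldots,\mathcal{L}_{r-s})$, where the coefficients of the linear forms $\mathcal{L}_j$ are the indeterminates $\phi_{i,j}$. Then $R$ is an element of $\fq[\mathrm{coeffs}(\bfs F_s^{\mathrm{in}})][\bfs\Phi]$, and I would define $\Phi_1,\ldots,\Phi_L\in\fq[\mathrm{coeffs}(\bfs F_s^{\mathrm{in}})]\subset\fq[\mathrm{coeffs}(\bfs F_s)]$ to be its coefficients as a polynomial in the entries $\phi_{i,j}$. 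The key step is then to check the identity $V(\Phi_1,\ldots,\Phi_L)=B_{r-s}^{\mathrm{in}}$. Fixing a concrete $\bfs F_s$ substitutes concrete values in $\cfq$ for $\mathrm{coeffs}(\bfs F_s^{\mathrm{in}})$, turning $R$ into a polynomial in the $\phi_{i,j}$ over $\cfq$; since the $\phi_{i,j}$ are algebraically independent over $\cfq$, the specialized resultant vanishes as an element of the field $\cfq(\bfs\Phi)$ if and only if it vanishes identically as a polynomial in $\bfs\Phi$, that is, if and only if all of $\Phi_1(\bfs F_s),\ldots,\Phi_L(\bfs F_s)$ vanish. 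Combining this with the resultant criterion and Lemma \ref{lemma: condition Kollar} yields the asserted equality.

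It remains to control the degrees. Here I would use the standard homogeneity property of the multivariate resultant: the resultant of $r$ forms of degrees $e_1,\ldots,e_r$ is homogeneous of degree $\prod_{k\ne i}e_k$ in the coefficients of the $i$-th form. With $(e_1,\ldots,e_r)=(d_1,\ldots,d_s,1,\ldots,1)$ this gives degree $\prod_{k\ne i,\,k\le s}d_k=\delta/d_i$ in $\mathrm{coeffs}(F_i^{\mathrm{in}})$ for $1\le i\le s$. Since $\mathrm{coeffs}(F_i^{\mathrm{in}})$ and the entries of $\bfs\Phi$ form disjoint groups of variables, extracting the coefficient of a monomial in $\bfs\Phi$ leaves the degree in $\mathrm{coeffs}(F_i^{\mathrm{in}})$ unchanged; moreover $\mathrm{coeffs}(F_i^{\mathrm{in}})$ is exactly the top-degree part of $\mathrm{coeffs}(F_i)$. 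Hence each $\Phi_\ell$ is homogeneous of degree $\delta/d_i$ in the variables $\mathrm{coeffs}(F_i)$, as required.

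I expect the main obstacle to be the rigorous justification of the second paragraph: namely that the vanishing of the specialized resultant \emph{as a field element} of $\cfq(\bfs\Phi)$ is equivalent to the vanishing of each coefficient $\Phi_\ell$. This rests on the algebraic independence of the $\phi_{i,j}$ over $\cfq$ together with a careful reading of the resultant criterion over the non-algebraically-closed field $\fq(\bfs\Phi)$ — one must ensure that ``nontrivial common zero over $\overline{\fq(\bfs\Phi)}$'' is precisely the event detected by the vanishing of $\mathrm{Res}$ with the $\phi_{i,j}$ treated as transcendentals.
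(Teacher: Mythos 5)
Your proposal follows essentially the same route as the paper's proof: both invoke Lemma \ref{lemma: condition Kollar} to reduce membership in $B_{r-s}^{\mathrm{in}}$ to nonemptiness of the system \eqref{eq: system for resultant B_0}, form the multivariate resultant of $\bfs F_s^{\mathrm{in}},\mathcal{L}_1,\ldots,\mathcal{L}_{r-s}$, take its coefficients with respect to the indeterminates $\bfs\Phi$ as the polynomials $\Phi_1,\ldots,\Phi_L$, and read off the degree $\delta/d_i$ from the standard multihomogeneity of the resultant. The only difference is that you make explicit (via algebraic independence of the $\phi_{i,j}$ over $\cfq$) the step that vanishing of the specialized resultant as an element of $\cfq(\bfs\Phi)$ is equivalent to vanishing of all its coefficients, a point the paper leaves implicit; this is a correct and welcome addition, not a divergence.
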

\begin{proof}
Condition \eqref{eq: system for resultant B_0} can be expressed by
means of multivariate resultants. The multivariate resultant of
formal homogeneous polynomials ${\sf F}_1 \klk {\sf
F}_r\in\K[X_1,\ldots,X_r]$ of degrees $d_1\klk d_r$ over a field
$\K$ is the unique irreducible multihomogeneous polynomial
$\mathrm{Res}\in\K[\mathrm{coeffs}({\sf F}_1)\klk
\mathrm{coeffs}({\sf F}_r)]$ with the following properties (see
\cite[Chapter 3, Theorem 2.3]{CoLiOS98}):
\begin{itemize}
  \item if $G_1,\ldots,G_r\in
\K[X_1,\ldots,X_r]$ are homogeneous polynomials of degrees $d_1\klk
d_r$, then $\mathrm{Res}(G_1,\ldots,G_r)=0$ if and only if
\linebreak $V(G_1,\ldots,G_r)\subset \Pp^r(\overline{\K})$ is
nonempty.
\item $\mathrm{Res}(X_1^{d_1},\ldots,X_r^{d_r})=1$.
\end{itemize}
Further, $\mathrm{Res}$ has degree $d_1\cdots d_{i-1}d_{i+1}\cdots
d_r$ in the coefficients of ${\sf F}_i$; see \cite[Chapter 3,
Theorem 3.1]{CoLiOS98}. As a consequence, the multivariate resultant
of $\bfs F_s^{\mathrm{in}},\mathcal{L}_1,\ldots,\mathcal{L}_{r-s}$
is a nonzero multihomogeneous polynomial $P_0\in
\fq[\bfs\Phi,\mathrm{coeffs}(\bfs F_s)]$ of degree
\begin{itemize}
\item $\frac{\delta}{d_i}$ in the coefficients of each $F_i$,
\item $\delta$ in the coefficients of each $\mathcal{L}_i$.
\end{itemize}
Write $P_0$ as an element of the polynomial ring
$\fq[\mathrm{coeffs}(\bfs F_s)][\bfs\Phi]$. Then the coefficients of
$P_0$ in $\fq[\mathrm{coeffs}(\bfs F_s)]$ determine a system of
equations for the set of system satisfying $\dim V(\bfs
F_s^{\mathrm{in}})>r-1-s$. According to our previous remarks, each
of these polynomials is multihomogeneous of degree
$\frac{\delta}{d_i}$
in the variables $\mathrm{coeffs}(F_i)$ for $1\le i\le s$.
\end{proof}

Next we show that there are varieties containing
$B_{r-s}^{\mathrm{in}}$ which can be described by ``few'' equations
of ``low'' degree.
\begin{lemma}\label{lemma: few eqs for B_(r-s)^in}
With hypothesis as in Lemma \ref{lemma: equations for B_(r-s)^in},
for $1\le k\le r-s+2$ there exist multihomogeneous polynomials
$\Phi^1,\ldots,\Phi^k\in \cfq[\mathrm{coeffs}(\bfs F_s)]$ of degree
$\frac{\delta}{d_i}$ in the variables $\mathrm{coeffs}(F_i)$ for
$1\le i\le s$, such that $V(\Phi^1,\ldots,\Phi^k)$ is of pure
codimension $k$ and contains $B_{r-s}^{\mathrm{in}}$. \end{lemma}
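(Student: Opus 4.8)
The plan is to produce $\Phi^1,\ldots,\Phi^k$ as generic $\cfq$--linear combinations of the polynomials $\Phi_1,\ldots,\Phi_L$ furnished by Lemma \ref{lemma: equations for B_(r-s)^in}, which satisfy $V(\Phi_1,\ldots,\Phi_L)=B_{r-s}^{\mathrm{in}}$ and are all multihomogeneous of the common multidegree $\frac{\delta}{d_i}$ in $\mathrm{coeffs}(F_i)$ for $1\le i\le s$. Since a $\cfq$--linear combination of forms sharing this multidegree is again multihomogeneous of this multidegree, the multidegree requirement holds automatically for whatever combinations we select, and the problem reduces to arranging the right codimension. The essential input is that $B_{r-s}^{\mathrm{in}}$ has codimension at least $r-s+2$ in $\mathcal{F}_{\bfs d_s}(\cfq)$; this is what was shown in the course of proving Lemma \ref{lemma: dimension B_0} (for the systems with $\deg F_i=d_i$) together with the bound on the sets $L_i$ used in the proof of Theorem \ref{th: dimension F_s not red reg seq} (for the systems with $\deg F_i<d_i$), and it is precisely this bound that lets $k$ run up to $r-s+2$.

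I would build a single chain $\Phi^1,\ldots,\Phi^{r-s+2}$ by induction, so that for every $k$ the truncation $\Phi^1,\ldots,\Phi^k$ is as desired. For $k=1$: as $B_{r-s}^{\mathrm{in}}$ is a proper subvariety of the irreducible space $\mathcal{F}_{\bfs d_s}(\cfq)$, not all $\Phi_t$ vanish identically, so a generic combination $\Phi^1:=\sum_{t=1}^L\lambda_t\Phi_t$ is a nonzero form and $V(\Phi^1)$ is of pure codimension $1$ and contains $B_{r-s}^{\mathrm{in}}$. For the inductive step, assume $\Phi^1,\ldots,\Phi^i$ with $i\le r-s+1$ have been chosen so that $Y:=V(\Phi^1,\ldots,\Phi^i)$ is of pure codimension $i$ and contains $B_{r-s}^{\mathrm{in}}$. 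Let $Y_1,\ldots,Y_m$ be the irreducible components of $Y$. Each $Y_l$ has codimension $i\le r-s+1$, which is strictly smaller than the codimension of $B_{r-s}^{\mathrm{in}}$; hence $\dim Y_l>\dim B_{r-s}^{\mathrm{in}}$ and $Y_l\not\subseteq B_{r-s}^{\mathrm{in}}=V(\Phi_1,\ldots,\Phi_L)$, so some $\Phi_t$ does not vanish identically on $Y_l$.

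The genericity step then sets $\Phi^{i+1}:=\sum_{t=1}^L\lambda_t\Phi_t$ and chooses the scalars to avoid every $Y_l$: for each fixed $l$ the tuples $(\lambda_1,\ldots,\lambda_L)$ for which $\Phi^{i+1}$ vanishes identically on $Y_l$ form a proper linear subspace of $\cfq^{\,L}$ (proper since some $\Phi_t$ does not vanish on $Y_l$), and as $\cfq$ is infinite a finite union of proper subspaces cannot exhaust $\cfq^{\,L}$. For a tuple avoiding all these subspaces we have $V(\Phi^1,\ldots,\Phi^{i+1})=\bigcup_{l=1}^m\big(Y_l\cap V(\Phi^{i+1})\big)$, and since $\Phi^{i+1}$ vanishes on no component $Y_l$, Krull's principal ideal theorem shows that each component of $Y_l\cap V(\Phi^{i+1})$ has codimension $1$ in $Y_l$, i.e.\ codimension $i+1$ in $\mathcal{F}_{\bfs d_s}(\cfq)$. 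Thus $V(\Phi^1,\ldots,\Phi^{i+1})$ is of pure codimension $i+1$; it is nonempty and contains $B_{r-s}^{\mathrm{in}}$ because each $\Phi^j$ lies in the span of the $\Phi_t$, which all vanish on $B_{r-s}^{\mathrm{in}}$. Iterating until $i+1=k$ gives the lemma.

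The main obstacle I anticipate is justifying that the construction can be carried all the way up to $k=r-s+2$: the inductive choice of a separating $\Phi^{i+1}$ is only possible while no component $Y_l$ of the previously built variety is contained in $B_{r-s}^{\mathrm{in}}$, and this is guaranteed exactly by the inequality $\dim Y_l=\dim\mathcal{F}_{\bfs d_s}(\cfq)-i>\dim B_{r-s}^{\mathrm{in}}$, valid for $i\le r-s+1$ thanks to the codimension bound $\mathrm{codim}\,B_{r-s}^{\mathrm{in}}\ge r-s+2$. Everything else—multihomogeneity, containment, and the pure codimension of each step—is routine given Krull's theorem and the fact that a finite union of proper linear subspaces of $\cfq^{\,L}$ is proper.
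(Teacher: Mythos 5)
Your proof is correct and follows essentially the same route as the paper: an inductive construction of generic $\cfq$-linear combinations of $\Phi_1,\ldots,\Phi_L$, where the bound $\mathrm{codim}\, B_{r-s}^{\mathrm{in}}\ge r-s+2$ guarantees that no irreducible component of the variety built so far lies inside $B_{r-s}^{\mathrm{in}}$, so a further generic combination cuts every component properly (and multihomogeneity is automatic). The only cosmetic difference is how genericity is certified—the paper picks a point $\bfs x^i$ in each component outside $B_{r-s}^{\mathrm{in}}$ and avoids the finitely many hyperplanes of coefficient tuples vanishing at these points, while you work with the proper linear subspaces of combinations vanishing identically on a component—and these are interchangeable.
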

\begin{proof}
Let $\Phi^1$ be a nontrivial linear $\cfq$-combination of the
polynomials \linebreak $\Phi_1,\ldots,\Phi_L$ of the statement of
Lemma \ref{lemma: equations for B_(r-s)^in}. It is clear that
$\Phi^1$ satisfies the conditions of the lemma for $k=1$.

Now assume inductively that we have $\Phi^1,\ldots,\Phi^{j-1}$
satisfying the conditions of the lemma for $k=j-1$. Write
$\mathcal{C}_1,\ldots,\mathcal{C}_t$ for the irreducible
$\cfq$-components of $V(\Phi^1,\ldots,\Phi^{j-1})$. As
$V(\Phi^1,\ldots,\Phi^{j-1})\subset\mathcal{F}_{\bfs d_s}(\cfq)$ is
of pure codimension $j-1$, and $B_{r-s}^{\mathrm{in}}$ is of
codimension at least $r-s+2>j-1$, no $\mathcal{C}_i$ is contained in
$B_{r-s}^{\mathrm{in}}$. Therefore, for each $i$ there exist $\bfs
x^i\in \mathcal{C}_i\setminus B_{r-s}^{\mathrm{in}}$. As
$(\Phi_1(\bfs x^i),\ldots,\Phi_L(\bfs x^i))$ is nonzero for $1\le
i\le t$, there exists a linear $\cfq$-combination $\Phi^j$ of
$\Phi_1,\ldots,\Phi_L$ which does not vanish at any $\bfs x^i$. This
implies that $\{\Phi^j=0\}$ has a nontrivial intersection with all
the components $\mathcal{C}_i$, and hence $V(\Phi^1,\ldots,\Phi^j)=
V(\Phi^1,\ldots,\Phi^{j-1})\cap\{\Phi^j=0\}$ has pure codimension
$j$. Further, by construction it is clear that the second condition
in the lemma holds, finishing thus the proof of the lemma.
\end{proof}

Finally, we can establish our bound on $|B_0(\fq)|$.
\begin{lemma}\label{lemma: bound nmb points B_0}
Let $\delta:=d_1\cdots d_s$ and $1\le k\le r-s+2$. If $d_i\ge 2$ for
$1\le i\le s$, then
$$|B_0(\fq)|\le \Bigg(\sum_{i=1}^s\frac{\delta}{d_i}\Bigg)^k
q^{\dim \mathcal{F}_{\bfs d_s}-k}.$$
\end{lemma}
\begin{proof}
By the B\'ezout inequality \eqref{eq: Bezout}, we have
$$\deg V(\Phi^1,\ldots,\Phi^k)\le
\Bigg(\sum_{i=1}^s\frac{\delta}{d_i}\Bigg)^k,$$
and \eqref{eq: upper bound -- affine gral} implies
$$|B_0(\fq)|\le |B_{r-s}^{\mathrm{in}}(\fq)|\le|V(\Phi^1,\ldots,\Phi^k)(\fq)|\le
\Bigg(\sum_{i=1}^s\frac{\delta}{d_i}\Bigg)^kq^{\dim
\mathcal{F}_{\bfs d_s}-k}.$$
This finishes the proof of the lemma
\end{proof}

Finally, it remains to bound $|\pi(W_{r-s})(\fq)|$. For this
purpose, we show that $\pi(W_{r-s})$ can be expressed with equations
of low degree. In Section \ref{subsec: dimension systems not H} we
prove that
$$W_{r-s}:=\{(\bfs F_s,\bfs x)\in W:\dim\pi^{-1}(\bfs F_s)\ge r-s\},$$
is Zariski closed. According to \eqref{eq: upper bound W_(r-s)}, we
have
$$\dim \mathcal{F}_{\bfs d_s}(\cfq)-(r-s+2)\ge \dim \pi(W_{r-s}).$$
%
%
Let $\bfs\Theta_1:=(\theta_i:1\le i\le N)$ be a vector $N$ of new
indeterminates over $\cfq$ and denote
\begin{align*}
\Delta_{\bfs\Theta_1}:=\sum_{i=1}^N\theta_i\Delta_i,
\end{align*}
where $\Delta_1,\ldots,\Delta_N$ are the maximal minors of the
Jacobian matrix of $\bfs F_s^h$ with respect to $X_0,\ldots,X_r$.
Let ${\sf K}:=\cfq(\bfs\Theta_1)$ and let $\overline{{\sf K}}$ be an
algebraic closure of ${\sf K}$. 
First we show that the fibers in $W_{r-s}$ can be described using a
single linear combination of $\Delta_1,\ldots,\Delta_N$ (over a
transcendent field extension). For this purpose, we start with the
following technical lemma.
\begin{lemma}\label{lemma: condition linear comb pol}
Let $\K$ be an infinite field. Let $V\subset
\mathbb{P}^r(\overline{\mathbb{K}})$ be a projective variety such
that $\dim Z \geq d \geq 1$ for every irreducible component $Z$ of
$V$, and $\Delta_1, \ldots, \Delta_N$ homogeneous polynomials of the
same degree in $\K[X_0, \ldots, X_r]$ such that  $\dim V \cap
V_{\mathbb{P}^r(\overline{\K})}(\Delta_1, \ldots, \Delta_N) < d$.
Then, for a generic choice of $\bfs \lambda:=(\lambda_1, \ldots,
\lambda_N) \in \K^N$, the hypersurface of
$\mathbb{P}^r(\overline{\K})$ defined by $\lambda_1\Delta_1 + \cdots
+ \lambda_N\Delta_N$ intersects $V$ properly.
\end{lemma}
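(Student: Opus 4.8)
The plan is to reduce the conclusion to a single non-vanishing condition on the generic linear combination $\Delta_{\bfs\lambda}:=\lambda_1\Delta_1+\cdots+\lambda_N\Delta_N$, and then to a piece of linear algebra over the infinite field $\K$. First I would pin down the meaning of a proper intersection in this projective setting. Let $Z$ be any irreducible component of $V$, so that $\dim Z\ge d\ge 1$. If $\Delta_{\bfs\lambda}$ does not vanish identically on $Z$, then the section $Z\cap\{\Delta_{\bfs\lambda}=0\}$ is nonempty (a hypersurface always meets a projective variety of positive dimension, by the projective dimension theorem) and, by Krull's principal ideal theorem applied in the homogeneous coordinate ring of $Z$, it has pure dimension $\dim Z-1$. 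Conversely, if $Z\subseteq\{\Delta_{\bfs\lambda}=0\}$ the intersection has dimension $\dim Z$. Hence $\{\Delta_{\bfs\lambda}=0\}$ meets $V$ properly if and only if no irreducible component of $V$ is contained in it; this is the statement I would establish for generic $\bfs\lambda\in\K^N$.

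For each of the finitely many irreducible components $Z$ of $V$, set
$$
L_Z:=\{\bfs\lambda\in\overline{\K}^N:\Delta_{\bfs\lambda}\ \text{vanishes identically on}\ Z\}.
$$
Since $\bfs\lambda\mapsto\Delta_{\bfs\lambda}$ is $\overline{\K}$-linear, $L_Z$ is an $\overline{\K}$-linear subspace of $\overline{\K}^N$; explicitly it is the annihilator of the vectors $(\Delta_1(\bfs x),\ldots,\Delta_N(\bfs x))$ as $\bfs x$ ranges over $Z$. The key point, and the only place where the hypothesis is used, is that each $L_Z$ is a \emph{proper} subspace. Indeed, if $L_Z=\overline{\K}^N$ then every $\Delta_i$ vanishes on $Z$, so $Z\subseteq V\cap V_{\Pp^r(\overline{\K})}(\Delta_1,\ldots,\Delta_N)$; this forces $\dim\big(V\cap V_{\Pp^r(\overline{\K})}(\Delta_1,\ldots,\Delta_N)\big)\ge\dim Z\ge d$, contradicting the assumption $\dim\big(V\cap V_{\Pp^r(\overline{\K})}(\Delta_1,\ldots,\Delta_N)\big)<d$.

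It then remains to produce $\bfs\lambda\in\K^N$ avoiding every $L_Z$. A parameter $\bfs\lambda\in\K^N$ is bad precisely when it lies in $\bigcup_Z(L_Z\cap\K^N)$. Each $L_Z\cap\K^N$ is a $\K$-linear subspace of $\K^N$, and it is proper: were it equal to $\K^N$, then $\K^N\subseteq L_Z$, whence taking the $\overline{\K}$-span would give $L_Z=\overline{\K}^N$, contradicting the previous step. Thus the bad parameters lie in a finite union of proper $\K$-subspaces of $\K^N$; since $\K$ is infinite, a vector space over $\K$ is never a finite union of proper subspaces, so the good parameters are nonempty, and being the complement of finitely many proper linear subspaces they are generic in $\K^N$. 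For any such $\bfs\lambda$, $\Delta_{\bfs\lambda}$ vanishes on no component of $V$, and by the first paragraph the hypersurface $\{\Delta_{\bfs\lambda}=0\}$ intersects $V$ properly. A pleasant feature of this route is that it never requires $V$ to be defined over $\K$.

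I expect the delicate point to be the first paragraph rather than the genericity argument: one must combine the projective dimension theorem, which guarantees the section is nonempty and is where $d\ge 1$ enters, with Krull's principal ideal theorem, which forces pure codimension one, so that a proper intersection is correctly characterized as one in which no component of $V$ is contained in the hypersurface. The load-bearing computation is then the short dimension estimate showing $L_Z\subsetneq\overline{\K}^N$, after which the conclusion follows from the elementary fact that a vector space over an infinite field is not a finite union of proper subspaces.
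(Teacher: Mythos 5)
Your proof is correct and is essentially the paper's own argument: the paper likewise uses the hypothesis $\dim\big(V\cap V_{\mathbb{P}^r(\overline{\K})}(\Delta_1,\ldots,\Delta_N)\big)<d$ to produce, for each component $Z_i$, a point at which not all the $\Delta_j$ vanish (equivalently, that your subspace $L_{Z_i}$ is proper), then chooses $\bfs\lambda$ generically so that $\Delta_{\bfs\lambda}$ vanishes identically on no component, and finally invokes the projective dimension theorem (this is where $d\ge 1$ enters) to get a nonempty, hence proper, intersection with each component. The only difference is packaging: the paper works with witness points $\bfs x_i\in Z_i$ and avoids the finitely many hyperplanes $\{\bfs\lambda:\Delta_{\bfs\lambda}(\bfs x_i)=0\}$, whereas you work with the vanishing subspaces $L_Z$ and the fact that a vector space over an infinite field is not a finite union of proper subspaces.
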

\begin{proof}
Write $\mathcal{S}:=V \cap V_{\mathbb{P}^r(\overline{\K})}(\Delta_1,
\ldots, \Delta_N)$ and denote by $Z_1, \ldots, Z_m$ the irreducible
components of $V$. As $\dim \mathcal{S} < \dim Z_i$, no component
$Z_i$ is contained in $\mathcal{S}$, and thus there exists a point
$\bfs x_i\in Z_i\setminus \mathcal{S}$. In particular,
$(\Delta_1(\bfs x_i), \ldots, \Delta_N(\bfs x_i))$ is a nonzero
vector for $1\le i \le m$. It follows that, for a generic choice of
$\bfs\lambda\in \K^N$, the polynomial $\Delta_{\bfs
\lambda}:=\lambda_1\Delta_1 + \cdots + \lambda_N\Delta_N$ satisfies
$\Delta_{\bfs \lambda}(\bfs x_i)\neq 0$ for $1\le i \le m$. Thus no
component $Z_i$ is contained in the hypersurface
$V_{\mathbb{P}^r(\overline{\K})}(\Delta_{\bfs\lambda})$. Further, as
$\dim Z_i + \dim
V_{\mathbb{P}^r(\overline{\K})}(\Delta_{\bfs\lambda})\geq d +
r-1\geq r$, we have $Z_i\cap
V_{\mathbb{P}^r(\overline{\K})}(\Delta_{\bfs\lambda})\neq \emptyset$
for $1\le i \le m$ (see, e.g., \cite[Chapter 5, Corollary
3.10]{Kunz85}). This completes the proof of the lemma.
\end{proof}

We need a second technical lemma.
\begin{lemma}\label{lemma: zero_divisor}
Let $\K$ be an infinite field, $X_0,\ldots,X_r$ indeterminates over
$\K$, and $I\subseteq \K[X_0, \ldots, X_r]$ an ideal. Let $\bfs
U:=(U_1, \ldots, U_m)$ be a new set of indeterminates over $\K$.
Suppose that a polynomial $F\in \K(\bfs U)[X_0, \ldots, X_r]$ is a
zero divisor modulo the extended ideal $I\cdot \K(\bfs U)[X_0,
\ldots, X_r]$. Then, for a generic specialization $\bfs U \mapsto
\bfs \lambda\in \K^m$, the specialized polynomial $F(\bfs \lambda)$
is a zero divisor modulo $I$.
\end{lemma}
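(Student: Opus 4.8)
The plan is to reduce the statement to McCoy's theorem: for any commutative ring $A$, a polynomial $g\in A[\bfs U]$ is a zero divisor if and only if there is a \emph{single} nonzero $a\in A$ with $ag=0$. Write $R:=\K[X_0,\ldots,X_r]$, $A:=R/I$, $S:=\K[\bfs U]\setminus\{0\}$, $R[\bfs U]:=\K[\bfs U][X_0,\ldots,X_r]$ and $R_{\bfs U}:=\K(\bfs U)[X_0,\ldots,X_r]$, so that $R_{\bfs U}=S^{-1}R[\bfs U]$ and $I R_{\bfs U}=S^{-1}(IR[\bfs U])$; we may assume $I$ is proper, the statement being vacuous otherwise. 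First I would record the contraction identity
\[
I R_{\bfs U}\cap R[\bfs U]=IR[\bfs U].
\]
Indeed, if $sx\in IR[\bfs U]$ with $s\in S$ and $x\in R[\bfs U]$, then $s\,\bar x=0$ in $A[\bfs U]$; since $\K[\bfs U]$ is a domain and $A$ is a $\K$-vector space, expanding $\bar x$ in a $\K$-basis of $A$ shows that $s\,\bar x=0$ forces $\bar x=0$, i.e.\ $x\in IR[\bfs U]$.

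Next I would clear denominators. By hypothesis there is $H\in R_{\bfs U}\setminus I R_{\bfs U}$ with $FH\in IR_{\bfs U}$; in particular $F\notin IR_{\bfs U}$. Choosing $a,b\in S$ with $\tilde F:=aF$ and $\tilde H:=bH$ in $R[\bfs U]$, and using that $a,b$ are units of $R_{\bfs U}$, we get $\tilde F\tilde H\in IR_{\bfs U}\cap R[\bfs U]=IR[\bfs U]$, whereas $\tilde F,\tilde H\notin IR[\bfs U]$ (since $IR[\bfs U]\subseteq IR_{\bfs U}$). Reducing modulo $I$, this says that $\overline{\tilde F}$ is a nonzero zero divisor of $A[\bfs U]$, with witness $\overline{\tilde H}\neq0$. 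McCoy's theorem now yields an element $s\in R\setminus I$ with $\bar s\,\overline{\tilde F}=0$ in $A[\bfs U]$; writing $\tilde F=\sum_\beta \tilde F_\beta\,\bfs U^\beta$ with $\tilde F_\beta\in R$, this means $s\,\tilde F_\beta\in I$ for every $\beta$. The key point is that the annihilator $s$ lies in $R$ and does not involve $\bfs U$, so that it survives specialization.

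Finally I would specialize. For every $\bfs\lambda\in\K^m$ one has $s\,\tilde F(\bfs\lambda)=\sum_\beta (s\,\tilde F_\beta)\,\bfs\lambda^\beta\in I$. On the nonempty Zariski-open set where $a(\bfs\lambda)\neq0$ the specialization $F(\bfs\lambda)$ is defined and equals $a(\bfs\lambda)^{-1}\tilde F(\bfs\lambda)$, whence $s\,F(\bfs\lambda)\in I$ with $s\notin I$. It remains to keep $F(\bfs\lambda)\notin I$, so that $F(\bfs\lambda)$ is a genuine zero divisor and not $0$ modulo $I$: since $\overline{\tilde F}\neq0$ in $A[\bfs U]$ and $\K$ is infinite, expanding the coefficients of $\overline{\tilde F}$ in a $\K$-basis of $A$ produces a nonzero $p\in\K[\bfs U]$ such that $\overline{\tilde F}(\bfs\lambda)\neq0$, i.e.\ $\tilde F(\bfs\lambda)\notin I$, whenever $p(\bfs\lambda)\neq0$. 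Intersecting the two nonempty open conditions, for a generic $\bfs\lambda\in\K^m$ the class of $F(\bfs\lambda)$ modulo $I$ is nonzero and annihilated by $\bar s\neq0$, so $F(\bfs\lambda)$ is a zero divisor modulo $I$. The main obstacle is the McCoy step of the second paragraph, namely turning the $\bfs U$-dependent relation $FH\equiv0$ into a single $\bfs U$-free annihilator $s$; for several $U_i$ this follows from the one-variable case via a Kronecker substitution $U_i\mapsto U^{N^{i-1}}$ with $N$ large, and without it the annihilating partner could depend on $\bfs U$ and need not specialize.
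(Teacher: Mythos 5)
Your proof is correct, but it takes a genuinely different route from the paper's. The paper keeps the zero-divisor \emph{witness}: writing $FG=H_1F_1+\cdots+H_sF_s$ with $G$ outside the extended ideal, it fixes a reduced Gr\"obner basis of $I$ (which stays a Gr\"obner basis of $I\cdot\K(\bfs U)[X_0,\ldots,X_r]$), shows that for generic $\bfs\lambda$ the specialization of the normal form $G_{red}$ is still reduced and nonzero, hence is the normal form of $G(\bfs\lambda,\bfs X)$, so that $G(\bfs\lambda,\bfs X)\notin I$ while the relation specializes to $F(\bfs\lambda)G(\bfs\lambda)\in I$. You instead \emph{eliminate} the witness: after clearing denominators and proving the contraction identity $IR_{\bfs U}\cap R[\bfs U]=IR[\bfs U]$ (via torsion-freeness of $A[\bfs U]$ over $\K[\bfs U]$), McCoy's theorem converts the $\bfs U$-dependent relation into a single annihilator $s\in R\setminus I$ not involving $\bfs U$, which therefore survives every specialization; genericity is then needed only to keep denominators and one auxiliary polynomial nonzero. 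Each approach has its merits: the paper's is self-contained modulo standard Gr\"obner facts, while yours avoids Gr\"obner bases at the price of invoking the (classical) multivariate McCoy theorem --- and you correctly flag that the several-variable case is the crux, reducing it to one variable by Kronecker substitution; moreover your argument proves slightly more, namely that $F(\bfs\lambda)\notin I$, i.e.\ the specialization is a \emph{nonzero} zero divisor, a point the paper's proof does not address. One cosmetic remark: your claim ``in particular $F\notin IR_{\bfs U}$'' presumes the convention that zero divisors are nonzero; under the convention in which $0$ counts as a zero divisor, the excluded case $F\in IR_{\bfs U}$ makes the conclusion trivial anyway (generically $F(\bfs\lambda)\in I$), so no generality is lost.
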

\begin{proof}
Let $F_1, \ldots, F_s\in \K[X_0, \ldots, X_r]$ be a basis of $I$. By
hypothesis, there exist polynomials $G,H_1, \ldots, H_s\in\K(\bfs
U)[X_0, \ldots, X_r]$ such that
\begin{align}\label{eq: zero_divisor_on_K}
G\notin I\cdot \K(\bfs U)[X_0, \ldots, X_r] \quad \textrm{ and }
\quad FG = H_1F_1+\ldots +H_sF_s.
\end{align}
%
Fix a monomial order of $\K[X_0, \ldots, X_r]$ and consider the
reduced Gr\"{o}bner basis $Q_1, \ldots, Q_n$  of $I$ with respect to
this monomial ordering. Since $\K(\bfs U)$ is a field extension of
$\K$, the polynomials $Q_1, \ldots, Q_n$ also form a Gr\"{o}bner
basis of the extended ideal $I\cdot \K(\bfs U)[X_0, \ldots, X_r]$.
Let $G_{red}\in \K(\bfs U)[X_0, \ldots, X_r]$ be the normal form of
$G$ with respect to $Q_1, \ldots, Q_n$. Thus, there exist $K_1,
\ldots, K_n\in \K(\bfs U)[X_0, \ldots, X_r]$ with
$$
G=K_1Q_1+ \ldots +K_nQ_n + G_{red}.
$$
As $G\notin I\cdot \K(\bfs U)[X_0, \ldots, X_r]$ we have
$G_{red}\neq 0$. Choose $\bfs\lambda \in \K^m$ such that
substituting $\bfs\lambda$ for $\bfs U$ does not annihilate any
denominator appearing in $G, K_1, \ldots, K_n$ and such that neither
$G(\bfs\lambda, \bfs X)$ nor $G_{red}(\bfs\lambda, \bfs X)$ are the
zero polynomial. For such a $\bfs\lambda$ we have
$$
G(\bfs\lambda, \bfs X)=K_1(\bfs\lambda, \bfs X)Q_1(\bfs X)+ \ldots
+K_m(\bfs\lambda, \bfs X)Q_n(\bfs X) + G_{red}(\bfs\lambda, \bfs X)
$$
Note that a monomial of any non zero term of $G_{red}(\bfs\lambda,
\bfs X)$ is also a monomial of $G_{red}$. Since $G_{red}$ is reduced
with respect to $Q_1, \ldots, Q_n$  we deduce that
$G_{red}(\bfs\lambda, \bfs X)$ is also reduced with respect to $Q_1,
\ldots, Q_n$. Thus $G_{red}(\bfs\lambda, \bfs X)$ is the normal form
of $G$ with respect to the  Gr\"{o}bner basis $Q_1, \ldots, Q_n$ of
$I$. Since $G_{red}(\bfs\lambda, \bfs X)\neq 0$, we deduce that
$G(\bfs\lambda, \bfs X)\notin I$. If we choose $\bfs\lambda$ so
that, in addition, no denominator in $H_1, \ldots, H_s$ and $F$
vanishes by substituting $\bfs\lambda$ for $\bfs U$, then the
following relations hold:
$$
G(\bfs\lambda,\! \bfs X)\notin I \ \textrm{and} \ F(\bfs\lambda,
\!\bfs X)\cdot G(\bfs\lambda,\! \bfs X) = H_1(\bfs\lambda,\! \bfs
X)F_1(\bfs X)+\cdots + H_s(\bfs\lambda,\! \bfs X)F_s(\bfs X).
$$
This shows that $ F(\bfs\lambda, \bfs X)$ is a zero divisor modulo
$I$, which proves the lemma.
\end{proof}

Now we can proceed to show that the fibers in $W_{r-s}$ can be
described using a single linear combination of
$\Delta_1,\ldots,\Delta_N$.
\begin{proposition}\label{prop: condition linear comb minors}
For any $\bfs F_s\in\mathcal{F}_{\bfs d_s}(\cfq)$ we have
\begin{align*}
\dim\pi^{-1}(\bfs F_s)\ge\! r-s\Leftrightarrow \dim\{\bfs
x\in\Pp^r(\overline{{\sf K}}): \bfs F_s^h(\bfs
x)=\!\Delta_{\bfs\Theta_1}(\bfs F_s^h,\bfs x)=0\}\!\ge
r-s.\end{align*}
\end{proposition}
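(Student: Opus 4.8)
The plan is to prove the biconditional by translating the fiber dimension condition into a statement about the variety $V(\bfs F_s^h)$, and then reduce the system of minors $\Delta_1,\ldots,\Delta_N$ to the single combination $\Delta_{\bfs\Theta_1}$ via the two technical lemmas just established. First I would fix $\bfs F_s\in\mathcal{F}_{\bfs d_s}(\cfq)$ and unwind the definition of $\pi$ and $W$: by \eqref{eq: incidence var W}, the fiber $\pi^{-1}(\bfs F_s)$ is exactly the set of $\bfs x\in\Pp^r$ with $\bfs F_s^h(\bfs x)=0$ and $\Delta_i(\bfs x)=0$ for all $1\le i\le N$, i.e.\ the points of $V(\bfs F_s^h)$ where the Jacobian matrix of $\bfs F_s^h$ fails to have full rank. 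So the left-hand condition $\dim\pi^{-1}(\bfs F_s)\ge r-s$ is precisely
$$
\dim\bigl(V(\bfs F_s^h)\cap V(\Delta_1,\ldots,\Delta_N)\bigr)\ge r-s,
$$
where all minors are evaluated over $\cfq$. The goal is to show this is equivalent to $\dim\{\bfs x\in\Pp^r(\overline{\sf K}): \bfs F_s^h(\bfs x)=\Delta_{\bfs\Theta_1}(\bfs x)=0\}\ge r-s$, where $\Delta_{\bfs\Theta_1}=\sum_i\theta_i\Delta_i$ is the generic single combination over ${\sf K}=\cfq(\bfs\Theta_1)$.

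For the forward direction, I would note that $V(\bfs F_s^h)\cap V(\Delta_{\bfs\Theta_1})$, viewed over $\overline{\sf K}$, contains the base-extension of $V(\bfs F_s^h)\cap V(\Delta_1,\ldots,\Delta_N)$, since any $\bfs x$ killing all the $\Delta_i$ also kills their combination; base extension to $\overline{\sf K}$ preserves dimension, so the right-hand dimension is at least $r-s$. The substantive content is the converse. Here the key structural input is that every irreducible component of $V(\bfs F_s^h)$ has dimension $\ge r-s$ (because it is cut out by $s$ equations in $\Pp^r$, so has codimension at most $s$). I would then split into two cases according to whether $\dim\bigl(V(\bfs F_s^h)\cap V(\Delta_1,\ldots,\Delta_N)\bigr)$ is already $\ge r-s$ or is $<r-s$. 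In the first case the left-hand side holds and we are done. In the second case—where $V(\bfs F_s^h)\cap V(\Delta_1,\ldots,\Delta_N)$ has dimension strictly less than $r-s$, the common lower bound on component dimensions—I apply Lemma~\ref{lemma: condition linear comb pol} with $\K=\cfq$, $V=V(\bfs F_s^h)$, and $d=r-s$ (noting $d\ge1$ since $s<r$). That lemma yields that for generic $\bfs\lambda\in\cfq^N$, the hypersurface $\{\Delta_{\bfs\lambda}=0\}$ meets every component of $V(\bfs F_s^h)$ properly, so $\dim\bigl(V(\bfs F_s^h)\cap V(\Delta_{\bfs\lambda})\bigr)=r-s-1$ over $\cfq$; passing from the generic $\bfs\lambda$ back to the transcendental $\bfs\Theta_1$ only preserves or increases this dimension, giving $\dim\{\bfs F_s^h=\Delta_{\bfs\Theta_1}=0\}\le r-s-1<r-s$ in this case. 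Contrapositively, if the right-hand dimension is $\ge r-s$, we cannot be in the second case, so the left-hand condition holds.

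The main obstacle I anticipate is handling the interplay between the transcendental parameters $\bfs\Theta_1$ and the generic specializations $\bfs\lambda$ clean­ly, and in particular controlling the radical/zero-divisor behavior—this is exactly why Lemma~\ref{lemma: zero_divisor} is in the toolkit. The delicate point is that ``$\dim\bigl(V(\bfs F_s^h)\cap V(\Delta_{\bfs\Theta_1})\bigr)=r-s$'' over $\overline{\sf K}$ should be read through the coordinate ring: I want to say $\Delta_{\bfs\Theta_1}$ is a zero divisor modulo $I(\bfs F_s^h)\cdot {\sf K}[X_0,\ldots,X_r]$ precisely when it fails to cut down the dimension, and Lemma~\ref{lemma: zero_divisor} lets me transfer this zero-divisor property between the generic extension and a specialization $\bfs\Theta_1\mapsto\bfs\lambda$. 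So the cleanest route is: the right-hand condition $\dim\{\bfs F_s^h=\Delta_{\bfs\Theta_1}=0\}\ge r-s$ is equivalent to $\Delta_{\bfs\Theta_1}$ being a zero divisor modulo $I(\bfs F_s^h)\cdot{\sf K}[\bfs X]$; by Lemma~\ref{lemma: zero_divisor} this is equivalent to $\Delta_{\bfs\lambda}$ being a zero divisor modulo $I(\bfs F_s^h)$ for generic $\bfs\lambda\in\cfq^N$; and by the contrapositive of Lemma~\ref{lemma: condition linear comb pol} this generic zero-divisor property forces $\dim\bigl(V(\bfs F_s^h)\cap V(\Delta_1,\ldots,\Delta_N)\bigr)\ge r-s$, i.e.\ the left-hand condition. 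Chaining these equivalences in both directions establishes the proposition; the care required is simply in matching the infinite-field hypotheses of both lemmas ($\cfq$ is infinite) and in ensuring the dimension $d=r-s\ge1$ so that Lemma~\ref{lemma: condition linear comb pol} applies.
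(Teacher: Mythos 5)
Your proposal follows the same skeleton as the paper (forward direction by extension of scalars, backward direction via Lemma~\ref{lemma: condition linear comb pol} and Lemma~\ref{lemma: zero_divisor}), but it has a genuine gap in the backward direction. In your ``Case 2'' you apply Lemma~\ref{lemma: condition linear comb pol} knowing only that every irreducible component of $V(\bfs F_s^h)$ has dimension \emph{at least} $r-s$, and you conclude $\dim\bigl(V(\bfs F_s^h)\cap V(\Delta_{\bfs\lambda})\bigr)=r-s-1$ for generic $\bfs\lambda$. The lemma does not give that: proper intersection yields $\dim\bigl(V(\bfs F_s^h)\cap V(\Delta_{\bfs\lambda})\bigr)=\dim V(\bfs F_s^h)-1$, which equals $r-s-1$ only if $V(\bfs F_s^h)$ has \emph{pure} dimension $r-s$. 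You never prove this purity. It does follow from the Case-2 hypothesis, but only via the tangent-space argument that the paper makes explicitly: if a component $\mathcal{C}$ of $V(\bfs F_s^h)$ had dimension $>r-s$, then at every point of $\mathcal{C}$ the Jacobian matrix of $\bfs F_s^h$ would have rank $<s$, so all maximal minors $\Delta_1,\ldots,\Delta_N$ would vanish identically on $\mathcal{C}$; hence $\mathcal{C}\subseteq\pi^{-1}(\bfs F_s)$, contradicting $\dim\pi^{-1}(\bfs F_s)<r-s$. Without this step your claimed equality, and with it the whole backward implication, does not follow. The same purity is silently assumed in your ``cleanest route'' paragraph: the asserted equivalence between $\dim\{\bfs F_s^h=\Delta_{\bfs\Theta_1}=0\}\ge r-s$ and $\Delta_{\bfs\Theta_1}$ being a zero divisor modulo $(\bfs F_s^h)\,{\sf K}[X_0,\ldots,X_r]$ is false for a general sequence: if some component of $V(\bfs F_s^h)$ has dimension $>r-s$, even a nonzerodivisor cutting it properly produces an intersection of dimension $\ge r-s$ (and the zero-divisor direction also uses unmixedness of the ideal, which again rests on purity).

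A second, more minor, defect: your transfer of the dimension bound from generic $\bfs\lambda$ to the transcendental $\bfs\Theta_1$ is justified backwards. You say passing from $\bfs\lambda$ to $\bfs\Theta_1$ ``preserves or increases'' dimension and then deduce an \emph{upper} bound; if the dimension could increase, no upper bound would follow. The correct statement is the opposite one: specialization can only preserve or increase dimension, so the generic-point fiber has the minimal dimension. The paper sidesteps this semicontinuity assertion entirely and makes the transfer rigorous through the zero-divisor mechanism: assuming $\dim V_{\Pp^r(\overline{\sf K})}(\bfs F_s^h,\Delta_{\bfs\Theta_1})>r-s-1$, purity forces $\Delta_{\bfs\Theta_1}$ to vanish on a whole component of $V(\bfs F_s^h)$, hence to be a zero divisor modulo $(\bfs F_s^h)\,{\sf K}[X_0,\ldots,X_r]$; Lemma~\ref{lemma: zero_divisor} then makes $\Delta_{\bfs\lambda}$ a zero divisor for generic $\bfs\lambda$, contradicting the proper intersection obtained from Lemma~\ref{lemma: condition linear comb pol}. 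So your chain is the right skeleton, but it closes only after the purity of dimension is established.
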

\begin{proof}
Suppose that $V(\bfs F^h_s, \Delta_1, \ldots, \Delta_N)\subset\Pp^r$
has an irreducible component $\mathcal{C}$ of dimension at least
$r-s$. Let $G_1, \ldots, G_h$ be homogeneous polynomials in
$\cfq[X_0, \ldots, X_r]$ defining $\mathcal{C}$. By, e.g.,
\cite[Chapter II, Proposition 4.4, c)]{Kunz85}, the subvariety
$\mathcal{C}^*$ of $\Pp^r(\overline{{\sf K}})$ defined by $G_1,
\ldots, G_h$ is of pure dimension equal to $\dim \mathcal{C}$.
Further, we claim that $\mathcal{C}^*\subseteq
V_{\Pp^r(\overline{{\sf K}})}(\bfs F^h_s, \Delta_{\bfs\Theta_1})$.
Indeed, since each $F^h_i$ vanishes identically on $\mathcal{C}$, by
the Nullstellensatz if follows that a power of  $F^h_i$ belongs to
the ideal of $\cfq[X_0, \ldots, X_r]$ generated by $G_1, \ldots,
G_h$, and thus $F^h_i$ vanishes identically on $\mathcal{C}^*$. By
the same argument, all the $\Delta_i$, and therefore
$\Delta_{\bfs\Theta_1}$, vanish identically on $\mathcal{C}^*$. This
proves our claim. We conclude that $\dim V_{\Pp^r(\overline{{\sf
K}})}(\bfs F^h_s, \Delta_{\bfs\Theta_1}) \geq \dim \mathcal{C}^*\geq
r-s$.

Now assume that $\dim V(\bfs F^h_s, \Delta_1, \ldots, \Delta_N)\le
r-s-1$. We claim that this implies $\dim V(\bfs F^h_s)=r-s$. Suppose
on the contrary that $V(\bfs F^h_s)$ has a component $\mathcal{C}$
with $\dim \mathcal{C} > r-s$. Then the tangent space of
$\mathcal{C}$ at any point $\bfs x\in \mathcal{C}$ has dimension
greater than $r-s$. It follows that the Jacobian matrix of $F^h_1,
\ldots F^h_s$ with respect to $X_0, \ldots, X_r$ has rank less than
$s$ and therefore all the maximal minors $\Delta_1, \ldots,
\Delta_N$ of this matrix vanish at $\bfs x$. We conclude that
$\mathcal{C}\subseteq V(\bfs F^h_s, \Delta_1, \ldots, \Delta_N)$,
and thus that $\dim V(\bfs F^h_s, \Delta_1, \ldots, \Delta_N)> r-s$,
contradicting our assumption. This proves our claim.

If $\bfs\lambda:=(\lambda_1, \ldots, \lambda_N)\in \cfq{\!}^N$, then
we write $\Delta_{\bfs\lambda}:=\lambda_1\Delta_1 + \ldots +
\lambda_N\Delta_N$ for the polynomial in $\cfq[X_0, \ldots, X_r]$
obtained by substituting $\lambda_i$ for $\theta_i$ in
$\Delta_{\bfs\Theta_1}$ for $1\le i \le N$. We claim that for a
generic choice $\bfs\lambda\in\cfq{\!}^N$, the variety $V(\bfs
F^h_s, \Delta_{\bfs\lambda})$ is of pure dimension $r-s-1$. Indeed,
write $\mathcal{S}:=V(\bfs F^h_s, \Delta_1, \ldots, \Delta_N)$ and
denote by $Z_1, \ldots, Z_m$ the irreducible components of $V(\bfs
F^h_s)$. Since $\dim V(\bfs F^h_s)=r-s$,  we have $\dim Z_i=r-s$ for
$1\le i \le m$.  As $\dim \mathcal{S} < \dim Z_i$ for each $i$, the
claim follows by Lemma \ref{lemma: condition linear comb pol}.

Next we show that $V_{\mathbb{P}^r(\overline{{\sf K}})}(\bfs F^h_s,
\Delta_{\bfs\Theta_1})$ is of pure dimension $r-s-1$. Arguing by
contradiction, assume that $\dim V_{\mathbb{P}^r(\overline{\sf
K})}(\bfs F^h_s, \Delta_{\bfs\Theta_1})
> r-s-1$. As $V(\bfs F^h_s)$ is of pure dimension $r-s$, then so is
$V_{\mathbb{P}^r(\overline{\sf K})}(\bfs F^h_s)$, and our assumption
implies that $\Delta_{\bfs\Theta_1}$ is a zero divisor modulo the
ideal $(F^h_1, \ldots, F^h_s){\sf K}[X_0, \ldots, X_r]$. By Lemma
\ref{lemma: zero_divisor}, for a generic specialization $\bfs
\Theta_1 \mapsto \bfs \lambda \in \cfq{\!}^N$, the specialized
polynomial $\Delta_{\bfs\lambda}$ is a zero divisor modulo $(F^h_1,
\ldots, F^h_s)$, which contradicts the fact that $V(\bfs F^h_s,
\Delta_{\bfs\lambda})$ is of pure dimension $r-s-1$ for generic
$\bfs\lambda$. Thus $V_{\mathbb{P}^r(\overline{\sf K})}(\bfs F^h_s,
\Delta_{\bfs\Theta_1})$ is of pure dimension $r-s-1$.
\end{proof}

Next, let $\bfs\Xi:=(\xi_{i,j}:0\le i\le r,1\le j\le r-s)$ be an
$(r+1)\times (r-s)$ matrix of indeterminates over $\overline{{\sf
K}}$ and denote
\begin{align*}
\Xi_j:=\sum_{i=0}^r\xi_{i,j}X_i\quad(1\le j\le r-s).
\end{align*}
Lemma \ref{lemma: condition Kollar} proves that the condition in
Proposition \ref{prop: condition linear comb minors} holds if and
only if
\begin{equation}\label{eq: system for resultant B_1}
\{\bfs x\in\Pp^r(\overline{{\sf K}(\bfs\Xi)}): \bfs F_s^h(\bfs
x)=\Delta_{\bfs\Theta_1}(\bfs F_s^h,\bfs
x)=\Xi_1=\cdots=\Xi_{r-s}=0\}
\end{equation}
is nonempty as a subset of $\Pp^r(\overline{{\sf K}(\bfs\Xi)})$. Now
we can show that $\pi(W_{r-s})$ can be described by equations of low
degree.
\begin{lemma}
Let $\delta:=d_1\cdots d_s$ and $\sigma:=d_1+\cdots+d_s-s$. There
exist multihomogeneous polynomials $\Phi_1,\ldots,\Phi_M\in
\fq[\mathrm{coeffs}(\bfs F_s)]$ of degree
$$C_1(d_i,r,s):=\delta\,\sigma\Big(1+\frac{1}{d_i}\Big)$$
in the variables $\mathrm{coeffs}(F_i)$ for $1\le i\le s$, such that
$V(\Phi_1,\ldots,\Phi_M)=\pi(W_{r-s})$.
\end{lemma}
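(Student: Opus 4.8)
The plan is to encode the nonemptiness of the system \eqref{eq: system for resultant B_1} by a single multivariate resultant, just as in the proof of Lemma \ref{lemma: equations for B_(r-s)^in}, and then to take the $\Phi_j$ to be the coefficients of that resultant with respect to the auxiliary variables $\bfs\Theta_1,\bfs\Xi$. Note that $F_1^h,\ldots,F_s^h$ together with $\Delta_{\bfs\Theta_1}$ and $\Xi_1,\ldots,\Xi_{r-s}$ form $s+1+(r-s)=r+1$ homogeneous polynomials in the $r+1$ variables $X_0,\ldots,X_r$, of respective degrees $d_1,\ldots,d_s,\sigma,1,\ldots,1$; here I use that each maximal minor of the Jacobian matrix of $\bfs F_s^h$ has degree $\sum_{i=1}^s(d_i-1)=\sigma$, so $\deg\Delta_{\bfs\Theta_1}=\sigma$. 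First I would form the multivariate resultant $P_1$ of these $r+1$ forms, a nonzero multihomogeneous polynomial in $\fq[\mathrm{coeffs}(\bfs F_s),\bfs\Theta_1,\bfs\Xi]$, using the properties of the resultant recalled in the proof of Lemma \ref{lemma: equations for B_(r-s)^in}.

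For the set-theoretic identity, fix $\bfs F_s\in\mathcal{F}_{\bfs d_s}(\cfq)$. All the forms above then have coefficients in the field $L:={\sf K}(\bfs\Xi)=\cfq(\bfs\Theta_1,\bfs\Xi)$, and by the fundamental property of the resultant the system \eqref{eq: system for resultant B_1} is nonempty over $\Pp^r(\overline{L})$ if and only if $P_1$ evaluated at $\bfs F_s$ vanishes in $L$. Since that evaluation is a polynomial in $\cfq[\bfs\Theta_1,\bfs\Xi]$, it vanishes in $L$ exactly when it is the zero polynomial, that is, when every coefficient of $P_1$ with respect to the monomials in $\bfs\Theta_1,\bfs\Xi$ vanishes at $\bfs F_s$. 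Writing $P_1=\sum_\beta \Phi_\beta\,M_\beta(\bfs\Theta_1,\bfs\Xi)$ with $\Phi_\beta\in\fq[\mathrm{coeffs}(\bfs F_s)]$ and $M_\beta$ monomials, and combining this with Proposition \ref{prop: condition linear comb minors} and Lemma \ref{lemma: condition Kollar} (which already establish that $\bfs F_s\in\pi(W_{r-s})$ is equivalent to the nonemptiness of \eqref{eq: system for resultant B_1}), I would conclude that $\pi(W_{r-s})=V(\Phi_1,\ldots,\Phi_M)$, with $M$ the number of monomials $M_\beta$.

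It then remains to bound the degree of each $\Phi_\beta$ in the block $\mathrm{coeffs}(F_i)$. Here I would invoke the degree formula for the resultant \cite[Chapter 3, Theorem 3.1]{CoLiOS98}: as a function of independent coefficients, the resultant is homogeneous of degree $\delta\sigma/d_i$ in the coefficients of $F_i^h$ and of degree $\delta$ in the coefficients of $\Delta_{\bfs\Theta_1}$, while the coefficients of the $\Xi_j$ do not involve $\mathrm{coeffs}(\bfs F_s)$. Now the coefficients of $F_i^h$ are exactly the variables $\mathrm{coeffs}(F_i)$, whereas each coefficient of $\Delta_{\bfs\Theta_1}$ is a determinant containing precisely one row of partial derivatives of $F_i^h$, hence multilinear in the blocks $\mathrm{coeffs}(F_1),\ldots,\mathrm{coeffs}(F_s)$ and of degree $1$ in $\mathrm{coeffs}(F_i)$. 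Substituting these expressions, $P_1$ becomes homogeneous of degree exactly $\delta\sigma/d_i+\delta$ in $\mathrm{coeffs}(F_i)$, and since $\sigma\ge 1$ this is at most $\delta\sigma\bigl(1+1/d_i\bigr)=C_1(d_i,r,s)$. By multihomogeneity of $P_1$, each $\Phi_\beta$ inherits this degree, which finishes the proof.

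I expect the delicate points to be twofold. The first is the rigorous justification that the vanishing of the resultant over the function field $L$ is equivalent to the generic nonemptiness of \eqref{eq: system for resultant B_1}; this relies on the specialization property of the resultant together with working projectively in $\Pp^r$, so that any drop in degree upon specialization is harmless. The second, and the real bookkeeping obstacle, is correctly tracking the degree of the coefficients of $\Delta_{\bfs\Theta_1}$ in each individual block $\mathrm{coeffs}(F_i)$: it is the multilinearity of the Jacobian minors (degree $1$ in each block, not $\sigma$) that produces the honest degree $\delta\sigma/d_i+\delta$, of which the stated quantity $C_1(d_i,r,s)$ is a convenient upper bound.
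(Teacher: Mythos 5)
Your proposal is correct and follows essentially the same route as the paper: both encode the nonemptiness of \eqref{eq: system for resultant B_1} by the multivariate resultant of $\bfs F_s^h$, $\Delta_{\bfs\Theta_1}(\bfs F_s^h,-)$, $\Xi_1,\ldots,\Xi_{r-s}$, take the $\Phi_j$ to be its coefficients when written as a polynomial in $\bfs\Theta_1,\bfs\Xi$ over $\fq[\mathrm{coeffs}(\bfs F_s)]$, and obtain the degree bound from the resultant degree formula combined with the fact that $\Delta_{\bfs\Theta_1}(\bfs F_s^h,-)$ has degree $1$ in each block $\mathrm{coeffs}(F_i)$. If anything, your bookkeeping is more precise than the paper's: the honest degree is $\delta\sigma/d_i+\delta$, which the paper presents as equal to $C_1(d_i,r,s)=\delta\sigma(1+1/d_i)$ when it is really only bounded by it (harmlessly, since $\sigma\ge 1$), exactly as you observe.
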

\begin{proof}
The condition that the variety of \eqref{eq: system for resultant
B_1} is nonempty can be expressed by means of multivariate
resultants. Taking into account $\Delta_{\bfs\Theta_1}(\bfs
F_s^h,-)$ has degree $\sigma$ in the variables $\bfs X$ and $1$ in
the coefficients of each $F_i$, we conclude that the multivariate
resultant of $\bfs F_s^h,\Delta_{\bfs\Theta_1}(\bfs F_s^h,-)$,
$\Xi_1,\ldots,\Xi_{r-s}$ is a nonzero multihomogeneous polynomial
$P\in \fq[\bfs\Xi,\bfs\Theta_1,\mathrm{coeffs}(\bfs F_s)]$ of degree
\begin{itemize}
\item $d_{\bfs F_s}:=\frac{\delta}{d_i}\,\sigma$ in the coefficients of each $F_i$,
\item $d_{\bfs\Theta}:=\delta$ in the coefficients of $\Delta_{\bfs\Theta_1}(\bfs F_s^h,-)$,
\item $\delta\,\sigma$ in the coefficients of each $\Xi_i$.
\end{itemize}
Write $P$ as an element of the polynomial ring
$\fq[\mathrm{coeffs}(\bfs F_s)][\bfs\Xi,\bfs\Theta_1]$. Then the
coefficients of $P$ in $\fq[\mathrm{coeffs}(\bfs F_s)]$ determine a
system of equations for $\pi(W_{r-s})$. According to our previous
remarks, each of these polynomials is multihomogeneous of degree
$$C_1(d_i,r,s):=d_{\bfs F_s}+d_{\bfs\Theta}=\delta\,\sigma\Big(1+\frac{1}{d_i}\Big)$$
in the variables $\mathrm{coeffs}(F_i)$ for $1\le i\le s$.
\end{proof}

Following the proof of Lemma \ref{lemma: few eqs for B_(r-s)^in}
{\em mutatis mutandis} we obtain the following result.
\begin{lemma}\label{lemma: existence Gamma_1,...,Gamma_(r-s+1) for B_1}
For $1\le j\le r-s+2$ there exist multihomogeneous polynomials
$\Gamma_1,\ldots,\Gamma_j\in\cfq[\mathrm{coeffs}(\bfs F_s)]$, of
degree $C_1(d_i,r,s)$ in the variables $\mathrm{coeffs}(F_i)$ for
$1\le i\le s$, with the following properties:
\begin{itemize}
  \item  $V(\Gamma_1,\ldots,\Gamma_j)\supset \pi(W_{r-s})$.
  \item
  $V(\Gamma_1,\ldots,\Gamma_j)\subset\mathcal{F}_{\bfs d_s}(\cfq)$ is
  of pure codimension $j$.
\end{itemize}
\end{lemma}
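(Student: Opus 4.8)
The plan is to imitate exactly the inductive construction used in the proof of Lemma \ref{lemma: few eqs for B_(r-s)^in}, replacing the polynomials $\Phi_1,\ldots,\Phi_L$ there with the polynomials $\Phi_1,\ldots,\Phi_M$ just produced that cut out $\pi(W_{r-s})$ and all have degree $C_1(d_i,r,s)$ in the variables $\mathrm{coeffs}(F_i)$. The crucial numerical input is the bound $\dim\pi(W_{r-s})\le \dim\mathcal{F}_{\bfs d_s}(\cfq)-(r-s+2)$ established via \eqref{eq: upper bound W_(r-s)}, which says that $\pi(W_{r-s})$ has codimension at least $r-s+2$. This is precisely the analogue of the codimension bound for $B_{r-s}^{\mathrm{in}}$ that drove the previous argument, so the whole scheme transfers verbatim for $1\le j\le r-s+2$.

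First I would set $\Gamma_1$ to be a generic (nontrivial) linear $\cfq$-combination of $\Phi_1,\ldots,\Phi_M$; being a combination of polynomials all of degree $C_1(d_i,r,s)$ in $\mathrm{coeffs}(F_i)$, it has the same degree, and $V(\Gamma_1)$ is a hypersurface containing $\pi(W_{r-s})$, which handles the base case $j=1$. Then I would proceed by induction: assuming $\Gamma_1,\ldots,\Gamma_{j-1}$ satisfy both bullet points for $k=j-1$, I would write $\mathcal{C}_1,\ldots,\mathcal{C}_t$ for the irreducible $\cfq$-components of $V(\Gamma_1,\ldots,\Gamma_{j-1})$, which has pure codimension $j-1$. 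Since $\pi(W_{r-s})$ has codimension at least $r-s+2>j-1$, no component $\mathcal{C}_i$ can be contained in $\pi(W_{r-s})$, so I may pick a point $\bfs x^i\in\mathcal{C}_i\setminus\pi(W_{r-s})$ for each $i$. Because $\pi(W_{r-s})=V(\Phi_1,\ldots,\Phi_M)$, the vector $(\Phi_1(\bfs x^i),\ldots,\Phi_M(\bfs x^i))$ is nonzero, so a generic linear combination $\Gamma_j$ of $\Phi_1,\ldots,\Phi_M$ avoids every $\bfs x^i$. Hence $\{\Gamma_j=0\}$ meets each $\mathcal{C}_i$ properly, and $V(\Gamma_1,\ldots,\Gamma_j)=V(\Gamma_1,\ldots,\Gamma_{j-1})\cap\{\Gamma_j=0\}$ has pure codimension $j$; the degree condition on $\Gamma_j$ holds by construction, completing the induction up to $j=r-s+2$.

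I do not expect a genuine obstacle here, since the statement openly asks to follow the earlier proof \emph{mutatis mutandis}; the only points requiring care are bookkeeping ones. I would make sure the codimension inequality $\operatorname{codim}\pi(W_{r-s})\ge r-s+2$ is invoked correctly so that the strict inequality $r-s+2>j-1$ indeed holds throughout the range $1\le j\le r-s+2$, and I would confirm that taking a generic $\cfq$-linear combination of the $\Phi_i$ both preserves the prescribed multidegree and realizes the required nonvanishing at the finitely many chosen points $\bfs x^i$ (a standard genericity argument over the infinite field $\cfq$). With these verified, the two bullet points follow exactly as in Lemma \ref{lemma: few eqs for B_(r-s)^in}, so the result stands.
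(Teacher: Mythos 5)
Your proposal is correct and is essentially the paper's own proof: the paper establishes this lemma precisely by carrying over the inductive construction of Lemma \ref{lemma: few eqs for B_(r-s)^in} \emph{mutatis mutandis}, with the codimension bound $\dim\pi(W_{r-s})\le \dim\mathcal{F}_{\bfs d_s}(\cfq)-(r-s+2)$ from \eqref{eq: upper bound W_(r-s)} playing exactly the role you assign it. Your bookkeeping checks (the strict inequality $r-s+2>j-1$ throughout the range, and genericity of linear combinations over the infinite field $\cfq$ preserving the multidegree and the nonvanishing at the chosen points) are the right ones and all hold.
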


Now we are finally able to establish our bound on the number of
systems defined over $\fq$ which do not define an ideal-theoretic
complete intersection.
\begin{theorem}\label{th: number systems not H} Let $\bfs d_s:=
(d_1,\ldots,d_s)$, $\delta:=d_1\cdots d_s$ and $\sigma:=d_1+\cdots+
d_s -s$. If $d_i\ge 2$ for $1\le i\le s$, then $|B_1(\fq)|\leq
\big(2\,s\,\delta\,\sigma \big)^{r-s+2}q^{\dim \mathcal{F}_{\bfs
d_s}-r+s-2}$.
\end{theorem}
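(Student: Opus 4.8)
The plan is to bound $|B_1(\fq)|$ by estimating the number of $\fq$--rational points in each of the three pieces appearing in the containment $B_1\subset\bigcup_{i=1}^s L_i\cup\pi(W_{r-s})\cup B_0$ recorded in Remark \ref{rem: expression B_1}. The terms $|L_i(\fq)|$ and $|B_0(\fq)|$ have already been handled: \eqref{eq: bound points L_i} gives $\sum_{i=1}^s|L_i(\fq)|\le s\,q^{\dim\mathcal{F}_{\bfs d_s}-(r-s+2)}$, and Lemma \ref{lemma: bound nmb points B_0} (applied with $k=r-s+2$) bounds $|B_0(\fq)|$. So the only genuinely new estimate needed is for $|\pi(W_{r-s})(\fq)|$, and the whole theorem reduces to assembling these via \eqref{eq: expression B_1 bound degree}.

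First I would treat $\pi(W_{r-s})$. By Lemma \ref{lemma: existence Gamma_1,...,Gamma_(r-s+1) for B_1}, applied with $j=r-s+2$, there are multihomogeneous polynomials $\Gamma_1,\ldots,\Gamma_{r-s+2}$ of degree $C_1(d_i,r,s)=\delta\,\sigma(1+\tfrac{1}{d_i})$ in $\mathrm{coeffs}(F_i)$, whose zero set $V(\Gamma_1,\ldots,\Gamma_{r-s+2})$ contains $\pi(W_{r-s})$ and is of pure codimension $r-s+2$. Applying the B\'ezout inequality \eqref{eq: Bezout} to this variety gives a degree bound of $\prod\text{(degrees)}$, where each $\Gamma_\ell$ has total degree $\sum_{i=1}^s C_1(d_i,r,s)$; then \eqref{eq: upper bound -- affine gral} converts this into the point count $|\pi(W_{r-s})(\fq)|\le\big(\sum_{i=1}^sC_1(d_i,r,s)\big)^{r-s+2}q^{\dim\mathcal{F}_{\bfs d_s}-(r-s+2)}$.

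The remaining work is purely arithmetic: I would simplify $\sum_{i=1}^sC_1(d_i,r,s)=\delta\sigma\sum_{i=1}^s(1+\tfrac{1}{d_i})$ and show it is at most $2\,s\,\delta\,\sigma$. Since each $d_i\ge 2$, we have $1+\tfrac{1}{d_i}\le\tfrac32$, so the sum is at most $\tfrac32\,s\,\delta\sigma\le 2\,s\,\delta\sigma$; the factor from the $L_i$ term and from $|B_0(\fq)|$ (whose degree factor $\sum_{i=1}^s\delta/d_i\le\tfrac{s\delta}{2}$ is comfortably dominated) are both absorbed by the constant $2\,s\,\delta\sigma$ once the common power $q^{\dim\mathcal{F}_{\bfs d_s}-(r-s+2)}$ is factored out. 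Collecting the three bounds in \eqref{eq: expression B_1 bound degree} and checking that the sum of the leading constants is at most $\big(2\,s\,\delta\,\sigma\big)^{r-s+2}$ yields the stated inequality.

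I do not expect a conceptual obstacle here, since all the hard geometric input (the codimension bound, the low-degree equations for $\pi(W_{r-s})$ via resultants, and the Bertini-type argument cutting $\pi(W_{r-s})$ down to pure codimension $r-s+2$) has already been established in the preceding lemmas. The only point requiring care is the bookkeeping: one must verify that the three separate point counts, each carrying the same power $q^{\dim\mathcal{F}_{\bfs d_s}-(r-s+2)}$, combine so that the total leading constant does not exceed $\big(2\,s\,\delta\,\sigma\big)^{r-s+2}$. The dominant contribution is the $\pi(W_{r-s})$ term with its constant $\big(\tfrac32\,s\,\delta\sigma\big)^{r-s+2}$, and the slack between $\tfrac32$ and $2$ (raised to the power $r-s+2$) is what provides room to absorb the comparatively small $L_i$ and $B_0$ contributions.
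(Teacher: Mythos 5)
Your proposal is correct and follows essentially the same route as the paper: the decomposition $B_1\subset\bigcup_{i=1}^s L_i\cup\pi(W_{r-s})\cup B_0$, the bound on $|\pi(W_{r-s})(\fq)|$ via Lemma \ref{lemma: existence Gamma_1,...,Gamma_(r-s+1) for B_1} with $j=r-s+2$, the B\'ezout inequality and \eqref{eq: upper bound -- affine gral}, and then the assembly through \eqref{eq: expression B_1 bound degree}. The only (immaterial) difference is in the closing arithmetic: the paper checks that the sum of the three degree constants is at most $2\,s\,\delta\,\sigma$ and invokes superadditivity of $x\mapsto x^{r-s+2}$, while you bound the dominant base by $\tfrac32\,s\,\delta\,\sigma$ and absorb the $L_i$ and $B_0$ terms in the resulting slack --- both verifications go through.
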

\begin{proof}
According to Lemma \ref{lemma: existence Gamma_1,...,Gamma_(r-s+1)
for B_1}, by the B\'ezout inequality \eqref{eq: Bezout}, we have
$$\deg V(\Gamma_1,\ldots,\Gamma_{r-s+2})\le
\Bigg(\sum_{i=1}^sC_1(d_i,r,s)\Bigg)^{r-s+2},$$
and \eqref{eq: upper bound -- affine gral} implies
\begin{align*}
|\pi(W_{r-s})(\fq)|&\le
|V(\Gamma_1,\ldots,\Gamma_{r-s+2})(\fq)|\\&\le
\Bigg(\sum_{i=1}^sC_1(d_i,r,s)\Bigg)^{r-s+2}q^{\dim
\mathcal{F}_{\bfs d_s}-(r-s+2)}.\end{align*}
By \eqref{eq: expression B_1 bound degree}, Lemma \ref{lemma: bound
nmb points B_0} and this bound, we obtain
$$|B_1(\fq)|\leq \Bigg(\bigg(\sum_{i=1}^sC_1(d_i,r,s)\bigg)^{r-s+2}+s
+\bigg(\sum_{i=1}^s\frac{\delta}{d_i}\bigg)^{r-s+2}\Bigg)q^{\dim
\mathcal{F}_{\bfs d_s}-r+s-2}.$$
To deduce the estimate of the theorem, we observe that it suffices
to show that
$$s\le 2\,s\,\delta\,\sigma-\delta\,\sigma\sum_{i=1}^s\Big(1+\frac{1}{d_i}\Big)
-\delta\sum_{i=1}^s\frac{1}{d_i}.$$
As $d_i\ge 2$ for each $i$ and thus $\sigma\ge s$, the theorem
readily follows.
\end{proof}

We can express Theorem \ref{th: number systems not H} in terms of
probabilities.
\begin{corollary}\label{coro: number systems not H}
With notations as in Theorem \ref{th: number systems not H},
considering the uniform probability in $\mathcal{F}_{\bfs d_s}$, the
probability ${\sf P}_1$ of the set of $\bfs F_s$ in
$\mathcal{F}_{\bfs d_s}$ which do not define an ideal-theoretic
complete intersection is bounded in the following way:
$${\sf P}_1\le  \bigg(\frac{2\,s\,\delta\,\sigma}{q}\bigg)^{r-s+2}.$$
\end{corollary}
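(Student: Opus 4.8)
The plan is straightforward: Corollary \ref{coro: number systems not H} is a direct restatement of Theorem \ref{th: number systems not H} in the language of probability. The key observation is that under the uniform probability on the finite set $\mathcal{F}_{\bfs d_s}$, the probability of the event $B_1(\fq)$ equals $|B_1(\fq)|/|\mathcal{F}_{\bfs d_s}|$, and $|\mathcal{F}_{\bfs d_s}| = q^{\dim \mathcal{F}_{\bfs d_s}}$, since $\mathcal{F}_{\bfs d_s}$ is an $\fq$-vector space of dimension $\dim \mathcal{F}_{\bfs d_s} = \sum_{i=1}^s \binom{d_i + r}{r}$ and hence has exactly $q^{\dim \mathcal{F}_{\bfs d_s}}$ elements.

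First I would invoke Theorem \ref{th: number systems not H}, which (under the hypothesis $d_i \ge 2$ for all $i$) gives the bound
\begin{equation*}
|B_1(\fq)| \le \big(2\,s\,\delta\,\sigma\big)^{r-s+2}\, q^{\dim \mathcal{F}_{\bfs d_s} - r + s - 2}.
\end{equation*}
Next, dividing both sides by the cardinality $|\mathcal{F}_{\bfs d_s}| = q^{\dim \mathcal{F}_{\bfs d_s}}$ yields
\begin{equation*}
{\sf P}_1 = \frac{|B_1(\fq)|}{q^{\dim \mathcal{F}_{\bfs d_s}}} \le \big(2\,s\,\delta\,\sigma\big)^{r-s+2}\, q^{-r+s-2} = \bigg(\frac{2\,s\,\delta\,\sigma}{q}\bigg)^{r-s+2},
\end{equation*}
where the last equality simply collects the exponent $r-s+2$ over the numerator and the matching power $q^{-(r-s+2)}$ in the denominator. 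This is exactly the claimed bound.

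There is essentially no obstacle here beyond bookkeeping: the entire content of the estimate is already contained in Theorem \ref{th: number systems not H}, and the corollary is a cosmetic reformulation obtained by normalizing the count into a probability. The only point requiring (trivial) care is to confirm that the power of $q$ in the bound for $|B_1(\fq)|$ is precisely $\dim \mathcal{F}_{\bfs d_s} - (r-s+2)$, so that after division by $q^{\dim \mathcal{F}_{\bfs d_s}}$ the exponent of $q$ becomes $-(r-s+2)$, allowing the factor $(2\,s\,\delta\,\sigma)^{r-s+2}$ and $q^{-(r-s+2)}$ to be combined into the single $(r-s+2)$-th power $\big(2\,s\,\delta\,\sigma/q\big)^{r-s+2}$.
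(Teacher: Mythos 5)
Your proposal is correct and matches the paper's (implicit) argument exactly: the corollary follows from Theorem \ref{th: number systems not H} by dividing the cardinality bound by $|\mathcal{F}_{\bfs d_s}| = q^{\dim \mathcal{F}_{\bfs d_s}}$ and observing that the exponent $\dim \mathcal{F}_{\bfs d_s}-r+s-2$ becomes $-(r-s+2)$, so the constant and the power of $q$ combine into $\big(2\,s\,\delta\,\sigma/q\big)^{r-s+2}$.
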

%
%
\subsection{The number of systems not defining an absolutely irreducible variety}
In Theorem \ref{th: dimension F_s not abs irred} we prove that $B_2$
has codimension at least $r-s+1$. Further, according to Remark
\ref{rem: expression B_2}, we have
$$
B_2\subset  \bigcup_{i=1}^s L_i  \cup \pi(W_{r-s-1})\cup B_0,
$$
where $L_i:=\mathcal{F}_{d_1}(\cfq)\times\cdots\times
\mathcal{F}_{d_{i-1}}(\cfq)\times
\mathcal{F}_{d_i-1}(\cfq)\times\mathcal{F}_{d_{i+1}}(\cfq)
\times\cdots\times \mathcal{F}_{d_s}(\cfq)$ for $1\le i\le s$ and
$B_0$ is defined in Lemma \ref{lemma: dimension B_0}. Hence, by
\eqref{eq: bound points L_i} we have
\begin{align}
|B_2(\fq)|&\le |\pi(W_{r-s-1})(\fq)|+\sum_{i=1}^s
|L_i(\fq)|+|B_0(\fq)|\notag\\ &\le |\pi(W_{r-s-1})(\fq)|+s\, q^{\dim
\mathcal{F}_{\bfs d_s}-(r-s+1)}+|B_0(\fq)|. \label{eq: expression B
bound degree}
\end{align}

Next we bound $|\pi(W_{r-s-1})(\fq)|$. For this purpose, following
the general lines of argumentation of Section \ref{subsec: number
systems not H}, we show that $\pi(W_{r-s-1})$ can be described by
means expressed with equations of low degree. Recall that the
variety
$$W_{r-s-1}:=\{(\bfs F_s,\bfs x)\in W:\dim\pi^{-1}(\bfs F_s)\ge r-s-1\},$$
is Zariski closed. According to \eqref{eq: upper bound W_(r-s+1)},
we have
$$\dim \mathcal{F}_{\bfs d_s}(\cfq)-(r-s+1)\ge \dim \pi(W_{r-s-1}).$$
%
%
Let $\bfs\Theta:=(\theta_{i,j}:1\le i\le N,1\le j\le 2)$ be an
$N\times 2$ matrix of new indeterminates over $\cfq$ and denote
\begin{align*}
\Delta_{\bfs\Theta_j}:=\sum_{i=1}^N\theta_{i,j}\Delta_i\quad(1\le
j\le 2),
\end{align*}
where $\Delta_1,\ldots,\Delta_N$ are the maximal minors of the
Jacobian matrix of $\bfs F_s^h$ with respect to $X_0,\ldots,X_r$.
Let ${\sf L}:=\cfq(\bfs\Theta)={\sf K}(\bfs \Theta_2)$ and let
$\overline{{\sf L}}$ be an algebraic closure of ${\sf L}$. 

\begin{proposition}\label{prop: condition two linear comb minors}
For any $\bfs F_s\in\mathcal{F}_{\bfs d_s}(\cfq)$ we have
\begin{align*}
\dim\pi^{-1}(\bfs F_s)\ge r-s-1\Leftrightarrow\hskip5cm \\\dim\{\bfs
x\in\Pp^r(\overline{{\sf L}}): \bfs F_s^h(\bfs
x)=\Delta_{\bfs\Theta_1}(\bfs F_s^h,\bfs
x)=\Delta_{\bfs\Theta_2}(\bfs F_s^h,\bfs x)=0\}\ge
r-s-1.\end{align*}
\end{proposition}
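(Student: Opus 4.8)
The plan is to mimic the proof of Proposition \ref{prop: condition linear comb minors} \emph{mutatis mutandis}, eliminating the minors in two successive steps, one for each of the forms $\Delta_{\bfs\Theta_1}$ and $\Delta_{\bfs\Theta_2}$. Throughout I identify the fiber $\pi^{-1}(\bfs F_s)$ with $V(\bfs F_s^h,\Delta_1,\ldots,\Delta_N)\subset\Pp^r(\cfq)$. The implication ($\Rightarrow$) is handled exactly as in Proposition \ref{prop: condition linear comb minors}: if $V(\bfs F_s^h,\Delta_1,\ldots,\Delta_N)$ has an irreducible component $\mathcal{C}$ of dimension at least $r-s-1$, I choose homogeneous generators of $\mathcal{C}$ over $\cfq$ and extend scalars to $\overline{{\sf L}}$. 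By \cite[Chapter II, Proposition 4.4, c)]{Kunz85} the resulting variety $\mathcal{C}^*$ has pure dimension $\dim\mathcal{C}$, and by the Nullstellensatz each $F_i^h$ and each $\Delta_i$, hence both $\Delta_{\bfs\Theta_1}$ and $\Delta_{\bfs\Theta_2}$, vanishes identically on $\mathcal{C}^*$. Thus $\mathcal{C}^*\subseteq V_{\Pp^r(\overline{{\sf L}})}(\bfs F_s^h,\Delta_{\bfs\Theta_1},\Delta_{\bfs\Theta_2})$, giving the desired lower bound.

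For ($\Leftarrow$) I argue by contraposition, assuming $\dim\pi^{-1}(\bfs F_s)\le r-s-2$. Since this is stronger than the hypothesis $\dim\pi^{-1}(\bfs F_s)\le r-s-1$ used in Proposition \ref{prop: condition linear comb minors}, the proof of that proposition yields that $V(\bfs F_s^h)$ is of pure dimension $r-s$ and that $V':=V_{\Pp^r(\overline{{\sf K}})}(\bfs F_s^h,\Delta_{\bfs\Theta_1})$ is of pure dimension $r-s-1$. As $V'$ is defined by the $s+1$ forms $\bfs F_s^h,\Delta_{\bfs\Theta_1}$ and has pure dimension $r-(s+1)$, Lemma \ref{lemma: reg seq and complete int} shows that these forms constitute a regular sequence in ${\sf K}[X_0,\ldots,X_r]$, so that the ideal $(\bfs F_s^h,\Delta_{\bfs\Theta_1})$ is unmixed over both ${\sf K}$ and ${\sf L}$. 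This unmixedness is exactly what will let me translate the geometric statement ``a component of $V'$ is contained in a hypersurface'' into the algebraic one ``the defining form is a zero divisor''.

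Now I perform the second cut. Since $\Delta_{\bfs\Theta_1}$ is a ${\sf K}$-linear combination of $\Delta_1,\ldots,\Delta_N$, we have $V'\cap V(\Delta_1,\ldots,\Delta_N)=V_{\Pp^r(\overline{{\sf K}})}(\bfs F_s^h,\Delta_1,\ldots,\Delta_N)$, whose dimension equals $\dim\pi^{-1}(\bfs F_s)\le r-s-2<r-s-1$ by invariance of dimension under the base change $\cfq\subset{\sf K}$. Applying Lemma \ref{lemma: condition linear comb pol} with $\K={\sf K}$, $V=V'$ and $d=r-s-1$, I obtain that for a generic $\bfs\mu\in{\sf K}^N$ the hypersurface $\{\Delta_{\bfs\mu}=0\}$ intersects $V'$ properly, so $V'\cap\{\Delta_{\bfs\mu}=0\}$ is of pure dimension $r-s-2$ and no component of $V'$ lies in $\{\Delta_{\bfs\mu}=0\}$. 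Suppose now, for a contradiction, that $V_{\Pp^r(\overline{{\sf L}})}(\bfs F_s^h,\Delta_{\bfs\Theta_1},\Delta_{\bfs\Theta_2})$ had dimension greater than $r-s-2$. As $V'_{\overline{{\sf L}}}$ is of pure dimension $r-s-1$, the form $\Delta_{\bfs\Theta_2}$ would vanish on a whole component of $V'_{\overline{{\sf L}}}$, hence, by unmixedness, would be a zero divisor modulo $(\bfs F_s^h,\Delta_{\bfs\Theta_1}){\sf L}[X_0,\ldots,X_r]$. Lemma \ref{lemma: zero_divisor}, applied with $\K={\sf K}$ and $\bfs U=\bfs\Theta_2$, would then produce a generic specialization $\bfs\Theta_2\mapsto\bfs\mu$ for which $\Delta_{\bfs\mu}$ is a zero divisor modulo $(\bfs F_s^h,\Delta_{\bfs\Theta_1}){\sf K}[X_0,\ldots,X_r]$, i.e.\ $\Delta_{\bfs\mu}$ vanishes on a component of $V'$, contradicting the proper intersection just established. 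Hence $V_{\Pp^r(\overline{{\sf L}})}(\bfs F_s^h,\Delta_{\bfs\Theta_1},\Delta_{\bfs\Theta_2})$ has dimension at most $r-s-2$, completing the contrapositive.

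I expect the main obstacle to be the bookkeeping of the tower $\cfq\subset{\sf K}\subset{\sf L}$ together with the repeated passage between the geometric picture (a component survives a hyperplane section) and the commutative-algebra picture (the cutting form is a zero divisor), which is precisely where the unmixedness of $(\bfs F_s^h,\Delta_{\bfs\Theta_1})$ furnished by Lemma \ref{lemma: reg seq and complete int} is indispensable. One also has to keep track of the degenerate case $r-s-1=0$, in which $V'$ is finite and Lemma \ref{lemma: condition linear comb pol} does not apply; there the emptiness of $V'\cap V(\Delta_1,\ldots,\Delta_N)$ already forces $(\Delta_1(\bfs x),\ldots,\Delta_N(\bfs x))\ne\bfs 0$ at each point $\bfs x$ of $V'$, so a generic $\Delta_{\bfs\mu}$ avoids all of them and the same contradiction argument goes through with ``proper intersection'' replaced by ``empty intersection''.
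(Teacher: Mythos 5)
Your proposal is correct and takes essentially the same route as the paper's own proof: the forward implication via scalar extension (Kunz) and the Nullstellensatz, and the converse by importing from Proposition~\ref{prop: condition linear comb minors} that $V_{\Pp^r(\overline{{\sf K}})}(\bfs F_s^h,\Delta_{\bfs\Theta_1})$ has pure dimension $r-s-1$, then playing Lemma~\ref{lemma: zero_divisor} against Lemma~\ref{lemma: condition linear comb pol} to show $\Delta_{\bfs\Theta_2}$ cannot be a zero divisor modulo $(\bfs F_s^h,\Delta_{\bfs\Theta_1}){\sf L}[X_0,\ldots,X_r]$. Your two additions---invoking Lemma~\ref{lemma: reg seq and complete int} to make explicit the unmixedness that licenses the translation between ``vanishes on a component'' and ``is a zero divisor,'' and treating the degenerate case $r-s-1=0$ where Lemma~\ref{lemma: condition linear comb pol} does not apply---are refinements of steps the paper leaves implicit, not a different argument.
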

\begin{proof}
Suppose that $V(\bfs F^h_s, \Delta_1, \ldots, \Delta_N)\subset\Pp^r$
has an irreducible component $\mathcal{C}$ of dimension at least
$r-s-1$. Let $G_1, \ldots, G_t$ be homogeneous polynomials in
$\cfq[X_0, \ldots, X_r]$ defining $\mathcal{C}$. By, e.g.,
\cite[Chapter II, Proposition 4.4, c)]{Kunz85}, the subvariety
$\mathcal{C}^*$ of $\Pp^r(\overline{{\sf K}})$ defined by $G_1,
\ldots, G_t$ is of pure dimension equal to $\dim \mathcal{C}$.
Further, we claim that $\mathcal{C}^*\subseteq
V_{\Pp^r(\overline{{\sf L}})}(\bfs F^h_s, \Delta_{\bfs\Theta_1},
\Delta_{\bfs\Theta_2})$. Indeed, since each $F^h_i$ vanishes
identically on $\mathcal{C}$,  by the Nullstellensatz it follows
that a power of  $F^h_i$ belongs to the ideal of $\cfq[X_0, \ldots,
X_r]$ generated by $G_1, \ldots, G_t$, and thus $F^h_i$ vanishes
identically on $\mathcal{C}^*$. By the same argument, all the
$\Delta_i$, and therefore $\Delta_{\bfs\Theta_1}$ and
$\Delta_{\bfs\Theta_2}$, vanish identically on $\mathcal{C}^*$,
which proves our claim. We conclude that $\dim
V_{\Pp^r(\overline{{\sf L}})}(\bfs F^h_s, \Delta_{\bfs\Theta_1},
\Delta_{\bfs\Theta_2}) \geq \dim \mathcal{C}^*\geq r-s-1$.

Now assume that $\dim V(\bfs F^h_s, \Delta_1, \ldots, \Delta_N)\le
r-s-2$.
As in the proof of Proposition \ref{prop: condition linear comb
minors} we conclude that $V_{\mathbb{P}^r(\overline{{\sf K}})}(\bfs
F^h_s, \Delta_{\bfs\Theta_1})$ is of pure dimension $r-s-1$.


Next we show that $V_{\mathbb{P}^r(\overline{{\sf L}})}(\bfs F^h_s,
\Delta_{\bfs\Theta_1}, \Delta_{\bfs\Theta_2})$ is of pure dimension
$r-s-2$.  If $\bfs\lambda:=(\lambda_1, \ldots, \lambda_N)\in {\sf
K}^N$, then we write $\Delta_{\bfs\lambda}:=\lambda_1\Delta_1 +
\ldots + \lambda_N\Delta_N$ for the polynomial in ${\sf K}[X_0,
\ldots, X_r]$ obtained by substituting $\lambda_j$ for
$\theta_{j,2}$ in $\Delta_{\bfs\Theta_2}$ for $1\le j \le N$.
Suppose that $\Delta_{\bfs\Theta_2}$ is a zero divisor modulo the
ideal $(\bfs F^h_s, \Delta_{\bfs\Theta_1}){\sf L}[X_0, \ldots,
X_r]$. By Lemma \ref{lemma: zero_divisor},  for a generic
specialization $\bfs \Theta_2 \mapsto \bfs\lambda \in {\sf K}^N$ the
specialized polynomial $\Delta_{\bfs\lambda}$ is a zero divisor
modulo the ideal $(\bfs F^h_s, \Delta_{\bfs\Theta_1})$ of ${\sf
K}[X_0, \ldots, X_r]$. On other hand, set
$\mathcal{S}:=V_{\mathbb{P}^r(\overline{{\sf K}})}(\bfs F^h_s,
\Delta_1, \ldots, \Delta_N)$.
 Then $\mathcal{S}=V_{\mathbb{P}^r(\overline{{\sf K}})}(\bfs F^h_s,
\Delta_{\bfs\Theta_1})\cap V_{\mathbb{P}^r(\overline{{\sf
K}})}(\Delta_1, \ldots, \Delta_N)$ and $\dim \mathcal{S}\le r-s-2$
by assumption. In particular, $\dim \mathcal{S} < \dim Z$ for any
irreducible component $Z$ of $V_{\mathbb{P}^r(\overline{{\sf
K}})}(\bfs F^h_s, \Delta_{\bfs\Theta_1})$. By Lemma \ref{lemma:
condition linear comb pol} it follows that, for a generic
specialization $\bfs \Theta_2 \mapsto \bfs\lambda \in {\sf K}^N$,
the hypersurface of $\mathbb{P}^r(\overline{{\sf K}})$ defined by
the specialized polynomial $\Delta_{\bfs\lambda}$  intersects the
variety $V_{\mathbb{P}^r(\overline{{\sf K}})}(\bfs F^h_s,
\Delta_{\bfs\Theta_1})$ properly. This contradicts the fact that
$\Delta_{\bfs\lambda}$ is a zero divisor modulo $(\bfs F^h_s,
\Delta_{\bfs\Theta_1})$ for generic $\bfs\lambda \in {\sf K}^N$.
Thus, $\Delta_{\bfs\Theta_2}$ is not a zero divisor modulo $(\bfs
F^h_s, \Delta_{\bfs\Theta_1}){\sf L}[X_0, \ldots, X_r]$. Since
$V_{\mathbb{P}^r(\overline{{\sf L}})}(\bfs F^h_s,
\Delta_{\bfs\Theta_1})$ is of pure dimension $r-s-1$ we conclude
that $V_{\Pp^r(\overline{\sf L})}(\bfs F^h_s, \Delta_{\bfs\Theta_1},
\Delta_{\bfs\Theta_2})$ is of pure dimension $r-s-2$, as asserted.
\end{proof}

Next, let $\bfs\Xi:=(\xi_{i,j}:0\le i\le r,1\le j\le r-s-1)$ be an
$(r+1)\times (r-s-1)$ matrix of indeterminates over $\overline{{\sf
K}}$ and denote
\begin{align*}
\Xi_j:=\sum_{i=0}^r\xi_{i,j}X_i\quad(1\le j\le r-s-1).
\end{align*}
According to Lemma \ref{lemma: condition Kollar}, the condition in
Lemma \ref{prop: condition two linear comb minors} holds if and only
if
\begin{equation}\label{eq: system for resultant}
\!\!\{\bfs x\!\in\Pp^r(\overline{{\sf K}(\bfs\Xi)})\!:\! \bfs
F_s^h(\bfs x)=\!\Delta_{\bfs\Theta_1}\!(\bfs F_s^h,\bfs
x)=\!\Delta_{\bfs\Theta_2}(\bfs F_s^h,\bfs
x)=\Xi_1=\cdots=\Xi_{r-s-1}=0\}
\end{equation}
is nonempty as a subset of $\Pp^r(\overline{{\sf K}(\bfs\Xi)})$. Now
we can show that $\pi(W_{r-s-1})$ can be described by equations of
low degree.
\begin{lemma}
Let $\delta:=d_1\cdots d_s$ and $\sigma:=d_1+\cdots+d_s-s$. There
exist multihomogeneous polynomials $\Psi_1,\ldots,\Psi_N\in
\cfq[\mathrm{coeffs}(\bfs F_s)]$ of degree
$$C_2(d_i,r,s):=\delta\,\sigma\bigg(\frac{\sigma}{d_i}+2\bigg)$$
in the variables $\mathrm{coeffs}(F_i)$ for $1\le i\le s$, such that
$V(\Psi_1,\ldots,\Psi_M)=\pi(W_{r-s-1})$.
\end{lemma}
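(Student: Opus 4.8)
The plan is to follow, \emph{mutatis mutandis}, the argument used above to describe $\pi(W_{r-s})$ by the polynomials $\Phi_1,\ldots,\Phi_M$, the only changes being that the single generic minor is replaced by the two generic minors $\Delta_{\bfs\Theta_1},\Delta_{\bfs\Theta_2}$ and that the number of generic linear forms drops from $r-s$ to $r-s-1$. By Proposition \ref{prop: condition two linear comb minors} combined with Lemma \ref{lemma: condition Kollar}, for a fixed $\bfs F_s$ the condition $\dim\pi^{-1}(\bfs F_s)\ge r-s-1$ is equivalent to the system \eqref{eq: system for resultant} being nonempty. Thus it suffices to detect this nonemptiness by a multivariate resultant and to read off the degrees of its coefficients.

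First I would note that \eqref{eq: system for resultant} consists of exactly $r+1$ homogeneous forms in the $r+1$ variables $X_0,\ldots,X_r$: the forms $F_1^h,\ldots,F_s^h$ of degrees $d_1,\ldots,d_s$, the two minors $\Delta_{\bfs\Theta_1},\Delta_{\bfs\Theta_2}$, each of degree $\sigma$ in $\bfs X$ and linear in $\mathrm{coeffs}(F_i)$ for every $i$, and the $r-s-1$ linear forms $\Xi_1,\ldots,\Xi_{r-s-1}$; indeed $s+2+(r-s-1)=r+1$. Hence the multivariate resultant $P$ of these forms is well defined, lies in $\fq[\bfs\Xi,\bfs\Theta,\mathrm{coeffs}(\bfs F_s)]$, and its vanishing over $\overline{{\sf K}(\bfs\Xi)}$ is equivalent to the nonemptiness of \eqref{eq: system for resultant}.

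The core of the argument is the degree count. By the resultant degree formula \cite[Chapter 3, Theorem 3.1]{CoLiOS98}, the degree of $P$ in the coefficients of a given input form equals the product of the degrees of the remaining forms. Since the product of all the degrees is $\delta\sigma^2$, the polynomial $P$ has degree $\frac{\delta\sigma^2}{d_i}$ in the coefficients of $F_i^h$ and degree $\delta\sigma$ in the coefficients of each of $\Delta_{\bfs\Theta_1}$ and $\Delta_{\bfs\Theta_2}$. Now $\mathrm{coeffs}(F_i)$ enters $P$ through three input forms: directly through $F_i^h$, and through each of the two minors, whose coefficients are linear in $\mathrm{coeffs}(F_i)$. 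Adding the three contributions yields
\[
\frac{\delta\sigma^2}{d_i}+\delta\sigma+\delta\sigma
=\delta\sigma\Big(\frac{\sigma}{d_i}+2\Big)=C_2(d_i,r,s)
\]
for the degree of $P$ in $\mathrm{coeffs}(F_i)$, and this degree is exact because every nonzero coefficient of a minor is homogeneous of degree $1$ in $\mathrm{coeffs}(F_i)$; thus $P$ is multihomogeneous of degree $C_2(d_i,r,s)$ in $\mathrm{coeffs}(F_i)$ for each $i$. Writing $P$ as an element of $\fq[\mathrm{coeffs}(\bfs F_s)][\bfs\Xi,\bfs\Theta]$ and letting $\Psi_1,\ldots,\Psi_M$ be its coefficients (indexed by the monomials in $\bfs\Xi,\bfs\Theta$), each $\Psi_j$ is again multihomogeneous of degree $C_2(d_i,r,s)$ in $\mathrm{coeffs}(F_i)$. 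Finally, all the $\Psi_j$ vanish at $\mathrm{coeffs}(\bfs F_s)$ exactly when $P$ vanishes identically in $(\bfs\Xi,\bfs\Theta)$, i.e. exactly when \eqref{eq: system for resultant} is nonempty, which by the equivalence above means $\bfs F_s\in\pi(W_{r-s-1})$; hence $V(\Psi_1,\ldots,\Psi_M)=\pi(W_{r-s-1})$.

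I expect the main obstacle to be precisely the degree bookkeeping: one must keep track that $\mathrm{coeffs}(F_i)$ occurs in three of the input forms and that each minor is \emph{linear} in $\mathrm{coeffs}(F_i)$, so that the three contributions add to exactly $C_2(d_i,r,s)$ and the $\Psi_j$ are genuinely multihomogeneous of this degree. A secondary point to check is that $P\not\equiv 0$, so that the $\Psi_j$ are genuine defining equations; this holds because, by \eqref{eq: upper bound W_(r-s+1)}, $\pi(W_{r-s-1})$ is a proper subvariety of $\mathcal{F}_{\bfs d_s}(\cfq)$, so for a generic $\bfs F_s$ the system \eqref{eq: system for resultant} is empty and $P$ does not vanish. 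One should also confirm that the chain of equivalences through Lemma \ref{lemma: condition Kollar} and Proposition \ref{prop: condition two linear comb minors} goes through verbatim, including the harmless degenerate case in which $V(\bfs F_s^h,\Delta_{\bfs\Theta_1},\Delta_{\bfs\Theta_2})$ is empty.
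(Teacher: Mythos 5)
Your proposal is correct and follows essentially the same route as the paper: the equivalence via Proposition \ref{prop: condition two linear comb minors} and Lemma \ref{lemma: condition Kollar}, the multivariate resultant of the $r+1$ forms $\bfs F_s^h,\Delta_{\bfs\Theta_1},\Delta_{\bfs\Theta_2},\Xi_1,\ldots,\Xi_{r-s-1}$, and the degree count $\frac{\delta\sigma^2}{d_i}+2\delta\sigma=C_2(d_i,r,s)$ obtained by adding the direct contribution of $F_i^h$ to the contributions through the two minors (each linear in $\mathrm{coeffs}(F_i)$) are exactly the paper's steps. Your extra checks (that there are $r+1$ forms in $r+1$ variables, and that $P\not\equiv 0$) are harmless elaborations of what the paper leaves implicit.
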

\begin{proof}
We express the condition that the variety of \eqref{eq: system for
resultant} is nonempty by means of multivariate resultants. More
precisely, the variety of \eqref{eq: system for resultant} is
nonempty if and only if the multivariate resultant $P\in
\fq[\bfs\Xi,\bfs\Theta,\mathrm{coeffs}(\bfs F_s)]$ of $\bfs
F_s^h,\Delta_{\bfs\Theta_1}(\bfs F_s^h,-),\Delta_{\bfs\Theta_2}(\bfs
F_s^h,-)$, $\Xi_1,\ldots,\Xi_{r-s-1}$ is nonzero. Taking into
account $\Delta_{\bfs\Theta_1}(\bfs
F_s^h,-),\Delta_{\bfs\Theta_2}(\bfs F_s^h,-)$ have degree $\sigma$
in the variables $\bfs X$ and $1$ in the coefficients of each $F_i$,
we conclude that $P$ is a nonzero multihomogeneous polynomial $P\in
\fq[\bfs\Xi,\bfs\Theta,\mathrm{coeffs}(F_1),\ldots,\mathrm{coeffs}(F_s)]$
of degree
\begin{itemize}
\item $d_{\bfs F_s}:=\frac{\delta}{d_i}\,\sigma^2$ in the coefficients of each $F_i$,
\item $d_{\bfs\Theta}:=\delta\,\sigma$ in the coefficients of each $\Delta_{\bfs\Theta_i}(\bfs F_s^h,-)$,
\item $\delta\,\sigma^2$ in the coefficients of each $\Xi_i$.
\end{itemize}
Write $P$ as an element of the polynomial ring
$\fq[\mathrm{coeffs}(\bfs F_s)][\bfs\Xi,\bfs\Theta]$. Then the
coefficients of $P$ in $\fq[\mathrm{coeffs}(\bfs F_s)]$ determine a
system of equations for $\pi(W_{r-s-1})$. According to our previous
remarks, each of these polynomials is multihomogeneous of degree
$$C_2(d_i,r,s):=d_{\bfs F_s}+2d_{\bfs\Theta}=\delta\,\sigma\bigg(\frac{\sigma}{d_i}+2\bigg)$$
in the variables $\mathrm{coeffs}(F_i)$ for $1\le i\le s$.
\end{proof}


Arguing {\em mutatis mutandis} as in Lemma \ref{lemma: few eqs for
B_(r-s)^in}, we obtain a variety of the same codimension as
$\pi(W_{r-s-1})$ and low degree.
\begin{lemma}\label{lemma: existence Gamma_1,...,Gamma_(r-s+1)}
For $1\le j\le r-s+1$ there exist multihomogeneous polynomials
$\Gamma_1,\ldots,\Gamma_j\in\cfq[\mathrm{coeffs}(\bfs F_s)]$, of
degree $C_2(d_i,r,s)$ in the variables $\mathrm{coeffs}(F_i)$ for
$1\le i\le s$, with the following properties:
\begin{itemize}
  \item  $V(\Gamma_1,\ldots,\Gamma_j)\supset \pi(W_{r-s-1})$.
  \item
  $V(\Gamma_1,\ldots,\Gamma_j)\subset\mathcal{F}_{\bfs d_s}(\cfq)$ is
  of pure codimension $j$.
\end{itemize}
\end{lemma}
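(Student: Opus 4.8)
The plan is to mirror, \emph{mutatis mutandis}, the inductive construction carried out in the proof of Lemma~\ref{lemma: few eqs for B_(r-s)^in}, now starting from the polynomials $\Psi_1,\ldots,\Psi_M$ produced in the preceding lemma, which satisfy $V(\Psi_1,\ldots,\Psi_M)=\pi(W_{r-s-1})$ and are multihomogeneous of degree $C_2(d_i,r,s)$ in $\mathrm{coeffs}(F_i)$ for $1\le i\le s$. The single ingredient that makes the induction work is the codimension estimate recorded after \eqref{eq: upper bound W_(r-s+1)}, namely that $\pi(W_{r-s-1})$ has codimension at least $r-s+1$ in $\mathcal{F}_{\bfs d_s}(\cfq)$; this is precisely the bound on $j$ appearing in the statement.

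For the base case $j=1$ I would take $\Gamma_1$ to be any nonzero $\cfq$-linear combination of $\Psi_1,\ldots,\Psi_M$. Then $\{\Gamma_1=0\}$ is a hypersurface, hence of pure codimension $1$, and it contains $\pi(W_{r-s-1})$ because each $\Psi_i$ vanishes there; moreover a linear combination of the $\Psi_i$ retains their common multihomogeneous degree $C_2(d_i,r,s)$ in each block $\mathrm{coeffs}(F_i)$, so the degree requirement holds. For the inductive step I would assume $\Gamma_1,\ldots,\Gamma_{j-1}$ already constructed for some $j$ with $2\le j\le r-s+1$, so that $V(\Gamma_1,\ldots,\Gamma_{j-1})$ is of pure codimension $j-1$ and contains $\pi(W_{r-s-1})$. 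Writing $\mathcal{C}_1,\ldots,\mathcal{C}_t$ for its irreducible $\cfq$-components, I observe that each $\mathcal{C}_i$ has codimension $j-1\le r-s<r-s+1\le\operatorname{codim}\pi(W_{r-s-1})$, so no $\mathcal{C}_i$ can be contained in $\pi(W_{r-s-1})$. Hence there is a point $\bfs x^i\in\mathcal{C}_i\setminus\pi(W_{r-s-1})$ at which the vector $(\Psi_1(\bfs x^i),\ldots,\Psi_M(\bfs x^i))$ is nonzero, and a generic $\cfq$-linear combination $\Gamma_j$ of $\Psi_1,\ldots,\Psi_M$ avoids all finitely many points $\bfs x^1,\ldots,\bfs x^t$.

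Consequently $\{\Gamma_j=0\}$ contains none of the $\mathcal{C}_i$ and therefore meets each of them properly, so that $V(\Gamma_1,\ldots,\Gamma_j)=V(\Gamma_1,\ldots,\Gamma_{j-1})\cap\{\Gamma_j=0\}$ is of pure codimension $j$; it still contains $\pi(W_{r-s-1})$, since $\Gamma_j$ is a linear combination of polynomials vanishing there, and it has the prescribed multihomogeneous degrees by the same degree-preservation remark as in the base case. This completes the induction and yields $\Gamma_1,\ldots,\Gamma_j$ as claimed.

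The only point requiring care—and the real content imported from Lemma~\ref{lemma: few eqs for B_(r-s)^in}—is the proper-intersection step: one must verify that cutting a variety of pure codimension $j-1$ by a hypersurface containing none of its components lowers the codimension of every component by exactly one, so that purity is preserved. This is exactly where the hypothesis $j\le r-s+1$ is indispensable, for it guarantees $j-1<\operatorname{codim}\pi(W_{r-s-1})$ and hence that the genericity argument applies to \emph{every} component simultaneously. I expect this to be the main (and essentially only) obstacle, the remaining verifications being formal.
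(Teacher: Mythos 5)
Your proof is correct and takes essentially the same approach as the paper, whose entire proof of this lemma is the instruction to argue \emph{mutatis mutandis} as in Lemma~\ref{lemma: few eqs for B_(r-s)^in}. Your explicit induction---taking generic $\cfq$-linear combinations of $\Psi_1,\ldots,\Psi_M$ that do not vanish at chosen points $\bfs x^i\in\mathcal{C}_i\setminus\pi(W_{r-s-1})$, which exist because the codimension of $\pi(W_{r-s-1})$ is at least $r-s+1>j-1$---is exactly that transferred argument, with the degree $C_2(d_i,r,s)$ and the range $1\le j\le r-s+1$ handled correctly.
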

%
%
%
Now we are finally able to establish our bound on the number of
systems defined over $\fq$ not defining an absolutely irreducible
variety.
\begin{theorem}\label{th: number systems not AI}
Let $\bfs d_s:= (d_1,\ldots,d_s)$, $\delta:=d_1\cdots d_s$ and
$\sigma:=d_1+\cdots +d_s -s$. If $d_i\ge 2$ for $1\le i\le s$, then
$|B_2(\fq)|\leq \big(2\,s\,\sigma^2\delta\big)^{r-s+1}q^{\dim
\mathcal{F}_{\bfs d_s}-r+s-1}$.
\end{theorem}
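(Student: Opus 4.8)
The plan is to follow the proof of Theorem \ref{th: number systems not H} \emph{mutatis mutandis}, with the variety $\pi(W_{r-s-1})$ playing the role of $\pi(W_{r-s})$ and the exponent $r-s+1$ replacing $r-s+2$. As a starting point I take the decomposition \eqref{eq: expression B bound degree},
\[
|B_2(\fq)|\le |\pi(W_{r-s-1})(\fq)|+s\, q^{\dim \mathcal{F}_{\bfs d_s}-(r-s+1)}+|B_0(\fq)|,
\]
so that it remains to estimate the first and third terms and to check that the resulting constant does not exceed $(2\,s\,\sigma^2\delta)^{r-s+1}$.

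For the first term I would apply Lemma \ref{lemma: existence Gamma_1,...,Gamma_(r-s+1)} with $j=r-s+1$, obtaining polynomials $\Gamma_1,\ldots,\Gamma_{r-s+1}$ of degree $C_2(d_i,r,s)$ in the variables $\mathrm{coeffs}(F_i)$ whose common zero set has pure codimension $r-s+1$ and contains $\pi(W_{r-s-1})$. The B\'ezout inequality \eqref{eq: Bezout} bounds the degree of this zero set by $\big(\sum_{i=1}^s C_2(d_i,r,s)\big)^{r-s+1}$, and then \eqref{eq: upper bound -- affine gral} gives
\[
|\pi(W_{r-s-1})(\fq)|\le \Bigg(\sum_{i=1}^s C_2(d_i,r,s)\Bigg)^{r-s+1} q^{\dim \mathcal{F}_{\bfs d_s}-(r-s+1)}.
\]
For the third term I would use Lemma \ref{lemma: bound nmb points B_0} with $k=r-s+1$. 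Writing $a:=\sum_{i=1}^s C_2(d_i,r,s)$ and $c:=\sum_{i=1}^s \delta/d_i$, substituting both bounds into the displayed inequality yields
\[
|B_2(\fq)|\le \big(a^{r-s+1}+s+c^{r-s+1}\big)\, q^{\dim \mathcal{F}_{\bfs d_s}-r+s-1}.
\]

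The only delicate point, which I expect to be the main obstacle, is the elementary estimate $a^{n}+c^{n}+s\le T^{n}$, where $n:=r-s+1\ge 2$ and $T:=2\,s\,\sigma^2\delta$. Since $T\ge 1$ and $a,c\le T$ once $a+c+s\le T$ is known, the bounds $a^{n}\le a\,T^{n-1}$, $c^{n}\le c\,T^{n-1}$ and $s\le s\,T^{n-1}$ reduce the claim to the single linear inequality $a+c+s\le T$. Setting $S:=\sum_{i=1}^s 1/d_i$, a direct computation gives $a=\delta\sigma^2 S+2\,s\,\delta\sigma$ and $c=\delta S$, so the inequality becomes $\delta\sigma^2 S+2\,s\,\delta\sigma+\delta S+s\le 2\,s\,\sigma^2\delta$. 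Here I would invoke the hypotheses $d_i\ge 2$, which force $S\le s/2$, together with $\sigma\ge s\ge 2$ and $\delta\ge 4$; these give $\delta\sigma^2(2s-S)\ge \tfrac{3s}{2}\delta\sigma^2$ and $\delta S\le \tfrac{s\delta}{2}$, whence
\[
2\,s\,\sigma^2\delta-a-c-s=\delta\sigma^2(2s-S)-2\,s\,\delta\sigma-\delta S-s\ge s\Big(\delta\big(\tfrac32\sigma^2-2\sigma-\tfrac12\big)-1\Big)>0,
\]
the final positivity following from $\tfrac32\sigma^2-2\sigma-\tfrac12\ge\tfrac32$ for $\sigma\ge2$ and $\delta\ge4$. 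This establishes the desired bound and hence the theorem.
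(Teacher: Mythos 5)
Your proof is correct and follows essentially the same route as the paper: the decomposition \eqref{eq: expression B bound degree}, Lemma \ref{lemma: existence Gamma_1,...,Gamma_(r-s+1)} with $j=r-s+1$ combined with B\'ezout and \eqref{eq: upper bound -- affine gral}, Lemma \ref{lemma: bound nmb points B_0} with $k=r-s+1$ (which you cite correctly, where the paper's proof mistakenly points to Lemma \ref{lemma: dimension B_0}), and then the reduction of $a^{n}+c^{n}+s\le T^{n}$ to the linear inequality $a+c+s\le T$, which is exactly the paper's ``it suffices to show'' step. Your only addition is that you carry out the elementary verification of that inequality explicitly, which the paper dismisses with ``the theorem readily follows.''
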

\begin{proof}
According to Lemma \ref{lemma: existence Gamma_1,...,Gamma_(r-s+1)},
by the B\'ezout inequality \eqref{eq: Bezout} we have
$$\deg V(\Gamma_1,\ldots,\Gamma_{r-s+1})\le
\Bigg(\sum_{i=1}^sC_2(d_i,r,s)\Bigg)^{r-s+1},$$
and \eqref{eq: upper bound -- affine gral} implies
\begin{align*}
|\pi(W_{r-s-1})(\fq)|&\le
|V(\Gamma_1,\ldots,\Gamma_{r-s+1})(\fq)|\\&\le
\Bigg(\sum_{i=1}^sC_2(d_i,r,s)\Bigg)^{r-s+1}q^{\dim
\mathcal{F}_{\bfs d_s}-(r-s+1)}.\end{align*}
By \eqref{eq: expression B bound degree}, Lemma \ref{lemma:
dimension B_0} and this bound, we obtain
$$|B_2(\fq)|\leq \Bigg(\bigg(\sum_{i=1}^sC_2(d_i,r,s)\bigg)^{r-s+1}+s+
\bigg(\sum_{i=1}^s\frac{\delta}{d_i}\bigg)^{r-s+1}\Bigg)q^{\dim
\mathcal{F}_{\bfs d_s}-r+s-1}.$$
To conclude, we observe that it suffices to show that
$$s\le 2\,s\,\delta\,\sigma^2-\delta\,\sigma^2\sum_{i=1}^s\Big(\frac{1}{d_i}+\frac{2}{\sigma}\Big)
-\delta\sum_{i=1}^s\frac{1}{d_i}.$$
Taking into account that $d_i\ge 2$ for each $i$ and thus $\sigma\ge
s$, the theorem readily follows.
\end{proof}

Finally, we express Theorem \ref{th: number systems not AI} in terms
of probabilities.
\begin{corollary}\label{coro: number systems not AI}
With notations as in Theorem \ref{th: number systems not AI},
considering the uniform probability in $\mathcal{F}_{\bfs d_s}$, the
probability ${\sf P}_2$ of the set of $\bfs F_s$ in
$\mathcal{F}_{\bfs d_s}$ for which $V(\bfs F_s)$ is not absolutely
irreducible is bounded in the following way:
$${\sf P}_2\le  \bigg(\frac{2\,s\,\sigma^2\delta}{q}\bigg)^{r-s+1}.$$
\end{corollary}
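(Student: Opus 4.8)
The plan is to obtain the probability bound as an immediate normalization of the cardinality estimate of Theorem \ref{th: number systems not AI}. First I would observe that $\mathcal{F}_{\bfs d_s}$ is a finite-dimensional $\fq$-vector space, so it has exactly $q^{\dim \mathcal{F}_{\bfs d_s}}$ elements, where $\dim \mathcal{F}_{\bfs d_s}$ denotes its dimension as an $\fq$-vector space (as already used in \eqref{eq: bound points L_i}). Since we consider the uniform probability on $\mathcal{F}_{\bfs d_s}$, the probability ${\sf P}_2$ of the event $B_2(\fq)$ is by definition
$${\sf P}_2 = \frac{|B_2(\fq)|}{|\mathcal{F}_{\bfs d_s}|} = \frac{|B_2(\fq)|}{q^{\dim \mathcal{F}_{\bfs d_s}}}.$$

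Next I would substitute the bound from Theorem \ref{th: number systems not AI}, namely $|B_2(\fq)|\leq \big(2\,s\,\sigma^2\delta\big)^{r-s+1}q^{\dim \mathcal{F}_{\bfs d_s}-r+s-1}$, into this quotient. The factor $q^{\dim \mathcal{F}_{\bfs d_s}}$ in the denominator then cancels the corresponding power in the numerator, leaving
$${\sf P}_2 \le \big(2\,s\,\sigma^2\delta\big)^{r-s+1}q^{-(r-s+1)} = \bigg(\frac{2\,s\,\sigma^2\delta}{q}\bigg)^{r-s+1},$$
which is the asserted bound. There is essentially no obstacle here: the entire content of the corollary is carried by Theorem \ref{th: number systems not AI}, and the only remaining work is the routine identification of the total count $|\mathcal{F}_{\bfs d_s}| = q^{\dim \mathcal{F}_{\bfs d_s}}$ together with the cancellation of the powers of $q$. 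Accordingly I expect the argument to occupy a single line once the normalization is set up.
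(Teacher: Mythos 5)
Your proposal is correct and matches the paper's treatment: the paper offers no separate proof of this corollary, presenting it as the immediate reformulation of Theorem \ref{th: number systems not AI} obtained by dividing the cardinality bound by $|\mathcal{F}_{\bfs d_s}| = q^{\dim \mathcal{F}_{\bfs d_s}}$ and cancelling powers of $q$, exactly as you do.
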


%
%
%
\bibliographystyle{amsalpha}

\bibliography{refs1, finite_fields}

\end{document}